\theoremstyle{plain}
\newcommand{\bH}{\mathbb{H}}
\newcommand{\cE}{\mathcal{E}}
\newcommand{\cL}{\mathcal{L}}
\newcommand{\bN}{\mathbb{N}}
\newcommand{\bQ}{\mathbb{Q}}
\newcommand{\bZ}{\mathbb{Z}}
\newcommand{\bR}{\mathbb{R}}
\newcommand{\bC}{\mathbb{C}}
\newcommand{\cO}{\mathcal{O}}
\newcommand{\bP}{\mathbb{P}}
\newcommand{\isom}{\cong}
\newtheorem{thm}{Theorem}[section]
\newtheorem{prop}[thm]{Proposition}
\newtheorem{lemma}[thm]{Lemma}
\theoremstyle{definition}
\newtheorem{exe}[thm]{Example}
\newtheorem{rem}[thm]{Remark}
\newtheorem*{thm*}{Theorem (\Cref{mainTheoremTrue})}
\newtheorem*{thm*2}{Theorem (\Cref{AbhyankarApproach})}
\newtheorem*{thm*3}{Theorem (\Cref{mainTheoremStrong})}
\newtheorem*{thm*4}{Theorem (\Cref{commutators})}
\newtheorem*{prop*}{Proposition}
\newtheoremstyle{named}%
    {}{}{\itshape}{}{\bfseries}{.}{.5em}{\thmnote{#3}}
\theoremstyle{named}
\title{Monodromy of elliptic logarithms: \\ some topological methods and effective results}
\author{Francesco Tropeano}
\date{}
\newcommand{\Addresses}{{
  \bigskip
  \bigskip
  \footnotesize
}
  
F.~Tropeano, \textsc{Universit\`a della Calabria, Italy}\par\nopagebreak
  \textit{E-mail address}: \texttt{francesco.tropeano@unical.it}
  
}
\begin{document}

\maketitle

\makeatletter
\@starttoc{toc}
\makeatother

\pagebreak
\begin{abstract}
We present some effective approaches in studying the relative monodromy group of elliptic logarithms with respect to periods of elliptic schemes. We provide explicit ways of constructing explicit loops which leave periods unchanged but along which logarithms have non-trivial variations. We also get some topological methods and effective results which allow to manage the ramification locus of sections. The paper was inspired by a theorem of Corvaja and Zannier which abstractly determine the relative monodromy group of non-torsion sections.
\end{abstract}

\section{Introduction}

Period functions, elliptic logarithms and the Betti map are very useful tools in the context of elliptic and abelian schemes, for instance in studying the torsion points of a section and many related questions in `unlikely intersections'. Often, a key point in using these tools is that they are only locally defined as holomorphic functions: some natural problems emerge when we ask about their analytic continuation. Generally, it turns out that it's impossible to globally define these functions, in the sense that they turn out to be multi-valued functions and to have a non-trivial monodromy action when we travel along closed paths. We will deal with some questions relative to the said monodromy action, in particular this paper is devoted to the study of the monodromy of elliptic logarithms.

\paragraph{General notations.}
We denote by $B$ a smooth affine curve and by $\pi:\cE \rightarrow B$ a complex elliptic scheme, where $\cE$ is a quasi-projective surface and $\pi:\cE \rightarrow B$ is a surjective morphism all of whose fibers $\pi^{-1}(b)$ are elliptic curves. We denote by $\cE_b$ the fiber $\pi^{-1}(b)$. We always suppose that there exists a zero-section $\sigma_0:B \rightarrow \cE$ which marks the origin in each fiber. Moreover, we always suppose that the family is not isotrivial and that on any compactification $\overline{\cE}\rightarrow \overline{B}$ each fiber over $\overline{B}-B$ is singular. When speaking about paths in $B$ we use the word `loop' as synonymous of closed path and we usually employ the word `path' referring to paths which are not loops.\\

A natural problem consists in finding out the minimal (unramified) cover of $B$ on which the relevant functions can be globally defined, which means studying their monodromy. In this context, Corvaja and Zannier in \cite{CZ2} focused on elliptic logarithms proving that the relative monodromy group of logarithm of a non-torsion section $\sigma$ with respect to periods of $\cE \rightarrow B$ is non-trivial (more precisely they proved it is isomorphic to the group $\bZ^2$). In other words, this is equivalent to the assertion that there exists a certain loop $\Gamma$ in $B$ which leaves unchanged periods but not logarithm via analytic continuation. Anyway, they used cohomological techniques and holomorphic sections to obtain the conclusion but their proof does not give a method for constructing such a loop explicitly. This paper was inspired by the work of Corvaja and Zannier with the initial aim of providing an explicit construction of the loop $\Gamma$. As a result, our investigation led to a different approach to the problem which provides a new viewpoint on the issue and an alternative proof of \Cref{mainTheorem} which has the advantage of being \emph{effective}. Further, our approach provides methods for managing the ramification locus of a section $\sigma:B\rightarrow \cE$ and consequently the property of being non-torsion.

We will prove that it is enough to put ourselves in the case where the elliptic scheme $\cE \rightarrow B$ is a pullback of the Legendre elliptic scheme $\cL \rightarrow \bP_{1}-\{0,1,\infty\}$, so that we have a finite map $p:B \rightarrow \bP_{1}-\{0,1,\infty\}$. An initial investigation about loops with trivial monodromy action on periods leads us to define some subgroups $D^{(n)}$ as in \Cref{D0} and following. This ends with a result which is interesting on its own, determining explicitly all the loops in $B$ with trivial monodromy action on periods:

\begin{thm*4}
	We have
	$$
	\ker{\rho_B}=p_*^{-1}\left(\langle \bigcup_{n \in \bN} D^{(n)} \rangle\right),
	$$
	where $\rho_B$ is the monodromy representation of the elliptic scheme.
\end{thm*4}

However, our main focus remains the study of the relative monodromy group of elliptic logarithms by providing explicit constructions. As a main result, we will construct a suitable subset $\{b_1, \ldots, b_{N}\}$ of the fiber $p^{-1}(s)$ of a fixed point $s \in \bP_{1}-\{0,1,\infty\}$ and prove the following result:

\begin{thm*}
	Let $\sigma:B \rightarrow \cE$ be a non-torsion section of the elliptic scheme. There exists a loop $\Gamma$ in $B$ which induces trivial monodromy action on periods but non-trivial monodromy action on the logarithm; actually, there exists a point $i \in \{1, \ldots, N\}$ such that $\Gamma$ is of the form
	$$
	\Gamma=\alpha_{1}\delta_{i}\alpha_{i}^{-1}\delta_{i}^{-1},
	$$
	where $\alpha_{1}$ is a loop whose homotopy class is one of the generators of the fundamental group of $B$ (with base point $b_{1}$), $\alpha_{i}$ is a loop based at $b_{i}$ which satisfies $p\circ \alpha_{i} = p\circ \alpha_{1}$, and $\delta_{i}$ is a path from $b_{1}$ to $b_{i}$ which leaves periods unchanged via analytic continuation. In particular, the relative monodromy group of logarithm of $\sigma$ with respect to periods of $\cE \rightarrow B$ is non-trivial, hence isomorphic to $\bZ^2$.
\end{thm*}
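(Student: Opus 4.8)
The plan is to repackage the logarithm's monodromy as an affine representation and thereby reduce the statement to the non-vanishing of a single translation vector attached to $\Gamma$. After the reduction to the Legendre scheme the period monodromy factors as $\rho_B=\rho\circ p_*$, where $\rho:\pi_1(\bP_1-\{0,1,\infty\},s)\to\mathrm{SL}_2(\bZ)$ has image the level-two congruence group. Writing $\Omega=(\omega_1,\omega_2)$ for a local frame of periods and $\ell$ for a branch of the logarithm of $\sigma$ at $b_1$, analytic continuation along a loop $\gamma$ sends $\Omega\mapsto\rho_B(\gamma)\Omega$ and $\ell\mapsto\ell+\langle c(\gamma),\Omega\rangle$ with $c(\gamma)\in\bZ^2$, because $\sigma$ is single-valued in its fibre and $\rho_B(\gamma)\in\mathrm{SL}_2(\bZ)$ preserves the period lattice as a set. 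This assembles into a homomorphism $\Phi:\pi_1(B,b_1)\to\bZ^2\rtimes\mathrm{SL}_2(\bZ)$, $\gamma\mapsto(c(\gamma),\rho_B(\gamma))$, whose linear part is $\rho_B$ and whose translation part $c$ is a $1$-cocycle. The relative monodromy group is precisely $\{c(\gamma):\gamma\in\ker\rho_B\}$, so it suffices to exhibit one $\gamma\in\ker\rho_B$ with $c(\gamma)\neq0$.

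First I would verify that $\Gamma=\alpha_1\delta_i\alpha_i^{-1}\delta_i^{-1}$ lies in $\ker\rho_B$. Since $b_1,b_i\in p^{-1}(s)$, the projection $p\circ\delta_i$ is a loop at $s$, and by construction it leaves periods unchanged, i.e.\ $\rho(p\circ\delta_i)=I$; together with $p\circ\alpha_i=p\circ\alpha_1$ this gives $p\circ\Gamma=[p\circ\alpha_1,p\circ\delta_i]$, a commutator whose image under $\rho$ is $[\rho(p\circ\alpha_1),I]=I$, so $\rho_B(\Gamma)=I$. (Equivalently, $p\circ\delta_i\in\langle\bigcup_nD^{(n)}\rangle$ and this subgroup is normal, so the commutator lands in it and $\Gamma\in\ker\rho_B$ by \Cref{commutators}.) Setting $\tilde\alpha_i:=\delta_i\alpha_i\delta_i^{-1}$, a loop based at $b_1$ with the same linear monodromy as $\alpha_1$, so that $\Gamma=\alpha_1\tilde\alpha_i^{-1}$, a direct computation with the cocycle relation makes the common linear part cancel and yields $c(\Gamma)=c(\alpha_1)-c(\tilde\alpha_i)$. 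The theorem is therefore reduced to finding an index $i$ for which $c(\alpha_1)\neq c(\delta_i\alpha_i\delta_i^{-1})$: the logarithm must acquire genuinely different translation vectors when the generator loop is run at $b_1$ and at the transported point $b_i$.

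The hard part will be exactly this non-vanishing, and it is where the non-torsion hypothesis enters. I would argue by contradiction: if $c(\alpha_1)=c(\delta_i\alpha_i\delta_i^{-1})$ held for every admissible $i$ and for each generator $\alpha_1$ of $\pi_1(B,b_1)$, then the cocycle $c$ would be invariant under transport across the whole fibre $p^{-1}(s)$ along the period-preserving paths $\delta_i$. Reading $c$ through the Betti decomposition $\ell=\beta_1\omega_1+\beta_2\omega_2$, this invariance forces the (multivalued, real-analytic) Betti coordinates of $\sigma$ to have trivial relative monodromy, hence to descend to single-valued functions on the cover where the periods are defined; combined with the explicit description of $\ker\rho_B$ from \Cref{commutators} and with the fact that the chosen points $b_1,\dots,b_N$ exhaust the relevant part of the fibre, one concludes that $\beta_1,\beta_2$ are constant rationals, i.e.\ $\sigma$ is torsion, contradicting the hypothesis. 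The two points I expect to fight with are: (i) choosing the finite set $\{b_1,\dots,b_N\}\subset p^{-1}(s)$ and the paths $\delta_i$ so that period-preservation of $\delta_i$ and the relation $p\circ\alpha_i=p\circ\alpha_1$ can be arranged simultaneously; and (ii) making the implication ``invariant cocycle $\Rightarrow$ torsion'' \emph{effective}, since producing the loop explicitly---rather than merely asserting its existence---is the whole purpose of the construction.

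Finally, once a non-trivial $c(\Gamma)\neq0$ is in hand, the relative monodromy group is a non-trivial subgroup of $\bZ^2$, and its identification with the full $\bZ^2$ follows from the structural dichotomy of \cite{CZ2} recalled in the introduction, according to which for a non-torsion section this group is either trivial or all of $\bZ^2$; alternatively, running the construction for the two generators of $\pi_1(B,b_1)$ should produce two independent translation vectors and exhibit the rank directly.
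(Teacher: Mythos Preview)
Your setup is correct and matches the paper: the affine representation $\Phi$, the cocycle $c$, the verification that $\Gamma\in\ker\rho_B$, and the reduction $c(\Gamma)=c(\alpha_1)-c(\tilde\alpha_i)$ are all in the paper (Sections~1.2 and~3.3). The genuine gap is in your third paragraph, the step you yourself flag as ``the hard part''.

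Your proposed contradiction argument---``if $c(\alpha_1)=c(\delta_i\alpha_i\delta_i^{-1})$ for all $i$ and all generators, then the Betti coordinates descend and are rational constants, so $\sigma$ is torsion''---is not a proof as it stands, and it is not how the paper proceeds. The paper does \emph{not} argue via the Betti map descending. Instead, the key input you are missing is a factorization $p=p_2\circ p_1$ with $p_2:B'\to S$ \emph{unramified} (so $\cL'\to B'$ is a modular elliptic scheme) and the points $b_1,\dots,b_N$ chosen as the fibre $p_1^{-1}(b')$ over a single point of $B'$. The trace $\mathrm{Tr}_{p_1}(\sigma)$ is then a rational section of the modular scheme $\cL'\to B'$, hence torsion by Shioda's theorem; after replacing $\sigma$ by a multiple one gets $\sum_i\log_\sigma(b_i)=0$. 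Tracking this identity along the common projection $\alpha'=p_1\circ\alpha_i$ forces $\sum_i\omega_{\alpha_i}=0$, and since $\omega_{\alpha_1}\neq0$ (Mordell--Weil over function fields) some $\omega_{\alpha_i}\neq\omega_{\alpha_1}$. This is Lemma~\ref{control_omegai}, and it is the entire mechanism for non-vanishing; nothing about Betti coordinates descending is used.

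Your point (i) is also not resolved: the construction of the $\delta_i$ and of the set $\{b_1,\dots,b_N\}$ is not free---it depends on the factorization through $B'$ (Lemma~\ref{controlBranch} and Lemma~\ref{delta_ij}), and the period-triviality of $p\circ\delta_i$ relies on the faithfulness of $\rho_{B'}$ on the modular curve. Finally, your last sentence suggesting that running the construction for two generators of $\pi_1(B,b_1)$ yields two independent vectors is not justified and is not what the paper does; the passage from rank~$\ge1$ to rank~$2$ uses either the irreducibility of the $\mathrm{SL}_2$-action on the lattice (non-effectively) or the explicit conjugation by $\zeta_0,\zeta_1$ in Theorem~\ref{mainTheoremStrong}.
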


Afterwards, we propose a second approach to the problem which needs an additional hypothesis on the ramification locus of the section but it has the advantage to give a more immediate way of constructing the loop $\Gamma$:

\begin{thm*2}
	Let $\sigma:B \rightarrow \cE$ be a non-torsion algebraic section of $\cL \rightarrow S$ and suppose that the ramification locus $\mathcal{R}$ of $\sigma$ does not contain the whole set $\{0,1,\infty\}$. Then the loop $\Gamma$ constructed in \Cref{mainTheoremTrue} can be chosen of the type
	$$
	\Gamma:=\alpha_1\delta_i\alpha_i^{-1}\delta_i^{-1},
	$$
	where the homotopy class of $\alpha_1$ is one of the generators of $\pi_1(B,b)$ and $\delta_i \in p_*^{-1}(D^{(0)})$.
\end{thm*2}

The previous theorems are focused on proving with effectivity the non-triviality of the relative monodromy group, which further is isomorphic to $\bZ^{2}$. This last conclusion is obtained with purely algebraic argument in a first moment, losing a little effectivity. Anyway, in the last part of the paper we provide an effective version of this step. To this end, let $z \in \bZ$ be the least common multiple of ramification indexes of points of $B$ which lie over $0$ or $1$ via the map $p:B \rightarrow \bP_{1}-\{0,1,\infty\}$. Denote by $\zeta_{0}$ and $\zeta_{1}$ the two loops in $B$ with base point $b_{1}$ constructed in \Cref{ramificationLoops}. The following result will provide a stronger conclusion than before in the sense of effectivity, exhibiting explicit generators of a full-rank subgroup of $M_{\sigma}^{\textnormal{rel}}$:

\begin{thm*3}
	Let $\sigma:B \rightarrow \cE$ be a non-torsion section of the elliptic scheme and let $\Gamma$ be the loop constructed in \Cref{mainTheoremTrue}. There exists a loop $\Gamma'$ in $B$ depending on $\Gamma$ and $z$ such that the monodromy actions of $\Gamma$ and $\Gamma'$ generate a subgroup of $M_{\sigma}^{\textnormal{rel}}$ which is isomorphic to $\bZ^{2}$; actually, $\Gamma'$ is of the form
	$$
	\Gamma'=\zeta\Gamma\zeta^{-1}\Gamma^{-1},
	$$
	where $\zeta \in \{\zeta_{0},\zeta_{1}\}$; further, keeping the notations of \Cref{commutators} we have $\Gamma' \in p_{*}^{-1}\left(D^{(z+1)}\right)$. In particular, the relative monodromy group of logarithm of $\sigma$ with respect to periods of $\cE \rightarrow B$ is isomorphic to $\bZ^2$.
\end{thm*3}

Several difficulties arise in our approach. The first one comes out when constructing the paths $\delta_{i}$: obtaining paths connecting two fixed points of $B$ with trivial monodromy action induced on periods of the Legendre scheme turns out to be not obvious. We reach this goal by getting some control on the ramification locus of the section. In a second moment we propose an alternative way of controlling the ramification of sections by using Abhyankar's lemma and Belyi pairs.\footnote{When our paper was finished, we realized that Belyi pairs also appear in connection with other problems in number theory as pointed out for instance in \cite{Z3}, \cite{Z2} and \cite{Z1}. However, our approach which consists in `separating the branch locus of sections' is completely new to our knowledge.} The second approach has some limitations but allows to obtain a more immediate way of constructing the $\delta_{i}$'s. Then, a second difficulty arises when controlling variations of logarithm along $\alpha_{1}$ and $\alpha_{i}$. This problem is fixed by using Mordell-Weil theorem for function fields and Shioda's theorem. Developing our topological tools we also obtain a way of constructing all the loops which acts trivially by monodromy on periods.

The previous theorems about the relative monodromy of elliptic logarithms are of independent interest since they contain as particular cases some classical theorems as: Manin's theorem on the non-constancy of the Betti map in the particular case of elliptic surfaces (see $\cite{Man}$ or \cite{Ber}); Shioda's theorem on the vanishing of the rank of elliptic modular surfaces (\cite[Theorem 5.1]{Shio}); a theorem of Bertrand in \cite{B} (generalized by André \cite[Theorem 3]{A}) which we state as \Cref{BertrandThm}. In particular, our methods and new proofs allow to get a topological and explicit approach to some classical results in transcendence of logarithms and periods. This kind of results are very useful tools in some problems in `unlikely intersections', in fact a result of independence between logarithms and periods is a standard step when we are interested in confining torsion of subvarieties of abelian schemes (for instance see \cite{CMZ}, \cite{MZ3}, \cite{MZ2}, \cite{MZ1} or \cite{CTZ}, \cite{DT}). We mention the fact that the \Cref{mainTheorem} has been conjecturally generalized in \cite{Tro}; in the same paper the author proved the instances of fibered product of elliptic schemes.

\paragraph{Structure of the paper.}
For the sake of clarity, we briefly explain how the paper is organized.

In the first section we introduce the general context fixing some notations and preliminary results needed in the following sections.

In the second part of the paper we focus on the main theorem and prove it is invariant under some assumptions and operations. We also provide some effective results, determining explicitly all the loops where periods have no monodromy.

The third part regards the first effective approach to the main theorem, which ends providing an explicit loop $\Gamma$ with desired properties. The method we use is based on some `control of the branch locus of the section', which is reached by using Legendre elliptic scheme, suitable factorizations of morphisms and topological considerations. The rest of the proof consists in controlling variations of elliptic logarithm by Mordell-Weil theorem and Shioda's theorem, concluding with an accurate analysis of analytic continuations of the relevant functions.

In the fourth part we propose a second approach to the problem with the aim of giving a more immediate construction of the loop $\Gamma$ in some cases. We analyze the advantages and limitations of this way. This approach provides a different method of controlling the branch locus, which we call `separation of the branch locus of the section': it is based on the use of Abhyankar's Lemma (here as \Cref{Abhyankar}) or Belyi pairs. The rest of the proof essentially follows the idea of our previous effective proof.

The last part is devoted to some additional considerations about our methods. In particular, we leave some questions and we give the concrete example of Masser's section showing how our ideas can be applied. Eventually, we obtain some stronger (effective) conclusions: in fact we determine explicit generators of a full-rank subgroup of the relative monodromy group of a non-torsion section. The last example about Masser's section also tacitly hides how the initial idea of the paper was born.

\textbf{Acknowledgements.} This work was part of the author's PhD thesis. The author is grateful to Umberto Zannier, Pietro Corvaja and Paolo Antonio Oliverio for helpful discussions, for their kind attention, their useful advice and references. The author also thanks Paolo Dolce for useful opinions and suggestions.

\section{Elliptic schemes and monodromy problems}

In this section we introduce some notations and preliminary results needed in the following sections. At the same time, we give an (unpretentious) overview on some monodromy problems on elliptic schemes.

Let us keep all the notations introduced above and denote by $I$ the interval $[0,1]$ on the real line. Moreover, we introduce the following notation, which will be repeatedly used when speaking about monodromy: if $\alpha:I \rightarrow S$ is a path and $f:S \rightarrow \bC$ is a function defined in a neighbourhood of $\alpha(0)$ that admits an analytic continuation along $\alpha$, we denote by $c_\alpha(f)$ the analytic continuation of $f$ in $\alpha(1)$ along $\alpha$.

Now, fix an elliptic scheme $\cE \rightarrow B$. Any fiber $\cE_b$ is analytically isomorphic to a complex torus $\bC/\Lambda_b$; thus in correspondence with the elliptic scheme we have a family of lattices $\Lambda_b$ varying with $b \in B$. We can locally define a basis of the period lattice: on suitable open simply connected subsets $U \subset B$ in the complex topology which cover $B$, we can find holomorphic functions
$$\omega_{U,1}, \omega_{U,2}: U \rightarrow \bC$$
such that $\omega_{U,1}(b), \omega_{U,2}(b)$ is a basis for $\Lambda_b$ for each $b \in U$.

We want to be a bit more explicit on this point. To this end, define $S:=\bP_1-\{0,1,\infty\}$ and denote by $\cL\rightarrow S$ the Legendre elliptic scheme, i.e. the elliptic scheme whose fiber corresponding to $\lambda \in S$ is given by
$$
\cL_\lambda: y^2=x(x-1)(x-\lambda).
$$
A possible choice of a basis of periods for the elliptic curve $\cL_\lambda$ is obtained by integrating the invariant differential $y^{-1}dx$ of the curve along two loops generating the homology of the torus corresponding to $\cL_\lambda$. We saw that for instance in the domain $\max(|\lambda|, |1-\lambda|)<1$ one obtains these periods as series expansions of suitable hypergeometric functions (see \cite{H}):
\begin{equation}\label{powerSeries}
	\omega^\cL_1(\lambda)=\pi\sum_{n=0}^\infty \binom{-\frac{1}{2}}{n}^2\lambda^n, \qquad \omega^\cL_2(\lambda)=i\pi\sum_{n=0}^\infty \binom{-\frac{1}{2}}{n}^2(1-\lambda)^n.
\end{equation}
The functions $\omega^\cL_1, \omega^\cL_2$ representing the periods in the above region, may be analytically continued along paths in $\bC-\{0,1\}$, obtaining functions $\omega^\cL_{U,1}, \omega^\cL_{U,2}$ which give locally a basis for the lattices $\Lambda_\lambda$ made up of holomorphic functions on open simply connected sets $U$ of an open covering of $\bC-\{0,1\}$.

We want to emphasize the fact that the Legendre elliptic scheme is more than a simple example. In fact, up to a finite base change, an (non-isotrivial) elliptic scheme $\cE \rightarrow B$ can be obtained as pullback of the Legendre scheme. Thus, in many situations if we prove a result on the Legendre elliptic scheme, we can say something about the starting elliptic scheme $\cE \rightarrow B$. To be more precise about this construction, we briefly explain that in the next remark:

\begin{rem}\label{twoTors}
	Let $\cE \rightarrow B$ be an elliptic scheme as above and suppose that the generic fiber of the family is given by a Weierstrass equation whose affine form is
	$$y^2=f(x),$$
	where the coefficients of $f$ are rational functions on $B$. Let us perform a finite cover $B'\rightarrow B$ of the base $B$, where $B'$ is chosen so large so that the two-torsion points of the generic fiber are well-defined functions on $B'$. We obtain an elliptic scheme $\cE'\rightarrow B'$ by pullback via $B'\rightarrow B$:
	$$\begin{tikzcd}
		\cE' \arrow{r} \arrow{d} & \cE \arrow{d}\\
		B' \arrow{r} & B.
	\end{tikzcd}$$
	Now, the $2$-torsion of the generic fiber is rational, so we can reduce the Weierstrass equation in Legendre form. This gives a morphism $B' \rightarrow S$ such that $\cE'$ is obtained as pullback of $\cL$ via the morphism $B' \rightarrow S$.
\end{rem}

Thanks to \Cref{twoTors} we can focus on elliptic schemes $\cE \rightarrow B$ endowed with a finite map $B\rightarrow S$ such that $\cE$ is obtained as pullback of the Legendre scheme via $B\rightarrow S$, i.e. we have an elliptic scheme $\cE \rightarrow B$ with a diagram
\begin{equation}\label{pullback}
	\begin{tikzcd}
		\cE \arrow{d} \arrow{r} & \cL \arrow{d}\\
		B \arrow{r}{p} & S,
	\end{tikzcd}
\end{equation}
where $p:B \rightarrow S$ is in general a ramified morphism.

We are able to construct explicitly periods of any elliptic scheme. We fix a base point $b \in B$, not a ramification point for $p:B \rightarrow S$, and let $s = p(b) \in S$. In a connected and simply connected neighborhood $U$ of $b$ in $B$, we can holomorphically define a basis $\omega_1, \omega_2$ of the period lattices by the equation
\begin{equation}\label{periodFunctions}
	\omega_i = \omega^\cL_i\circ p.
\end{equation}
Locally on suitable open subsets $U \subset B$ (in the complex topology), this gives a basis for $\Lambda_b$ made up of holomorphic functions $\omega_{U,1}, \omega_{U,2}:U \rightarrow \bC$.

\subsection{Monodromy of periods}

Combining \Cref{powerSeries} and \Cref{periodFunctions} we obtain locally period functions of any elliptic scheme $\cE\rightarrow B$. These functions may be analytically continued through the whole of $B$, but it's impossible to globally define them: they turn out to be multi-valued functions, i.e. they have quite nontrivial monodromy when traveling along closed paths. 

First of all, note that the monodromy action corresponds to the action of the fundamental group $G:=\pi_1(B,b)$ on the first homology group $H_1(\cE_b,\bZ)$: thus, we get a monodromy representation
\begin{equation}\label{monRepr}
	\rho=\rho_B:G \rightarrow \textnormal{Aut}(H_1(\cE_b,\bZ))\cong\textnormal{GL}_2(\bZ).
\end{equation}
Note that the period lattice as a whole does depend only on $b$, namely there is no monodromy (as a whole): in fact, the lattice is uniquely determined by the Weierstrass equation of the elliptic curve; also, the orientation of the tangent space coming from the complex structure provides an orientation of the set of bases for these lattices. To be more precise, since monodromy preserves the intersection form then any action of monodromy corresponds to an element of $\textrm{SL}_2(\bZ)$: therefore the above representation writes down as $\rho:G\rightarrow \textnormal{SL}_2(\bZ)$.

We define the monodromy group of $\cE\rightarrow B$ at $b$ as $\textnormal{M}(\cE,b)=\rho(G)$. Since $B$ is path connected, all the groups $\textrm{Mon}(\cE,b)$ are conjugate when we vary the base point $b \in B$: in other words, the subgroups $\textrm{Mon}(\cE)$ will be defined up to an inner automorphism of the group $\textrm{SL}_2(\bZ)$. Thus, fix once and for all a base point $b_0 \in B$ and denote the group $\textrm{Mon}(\cE,b_0)$ by $\textrm{Mon}(\cE)$, without writing any dependencies on the base point.

In view of \Cref{periodFunctions}, in order to obtain information about $\textrm{Mon}(\cE)$ it is enough to study the analytic continuation of power series in \Cref{powerSeries}. Since $\pi_1(S)$ is freely generated by two loops from the basepoint around $0,1$, define $\mathfrak{a}_0$ as a small circle centred at $0$ and $\mathfrak{a}_1$ as a small circle centred at $1$. The generators of $\pi_1(S)$ can be chosen as the homotopy classes $g_0, g_1$ of $\mathfrak{a}_0, \mathfrak{a}_1$, respectively. In \cite{JS} and \cite{CZ1}, explicit calculations about series expansions of periods are performed: in particular, in the second paper the authors explicitly exhibit the monodromy action on periods. They write down the following relations:
\begin{center}
	\begin{tabular}{lcl}
		$c_{\mathfrak{a}_0}(\omega^\cL_1) = \omega^\cL_1,$ & & $c_{\mathfrak{a}_0}(\omega^\cL_2) = \omega^\cL_2 + 2\omega^\cL_1;$\\
		$c_{\mathfrak{a}_1}(\omega^\cL_1) = \omega^\cL_1 + 2\omega^\cL_2,$ & & $c_{\mathfrak{a}_1}(\omega^\cL_2) = \omega^\cL_2.$
	\end{tabular}
\end{center}
This determines the full monodromy group $\textnormal{Mon}(\cL)$. In fact, given a period $\omega:=n\omega^\cL_1 + m\omega^\cL_2$, we obtain
\begin{equation*}
	\begin{aligned}
		c_{\mathfrak{a}_0}(\omega) &= n\omega^\cL_1 + m(\omega^\cL_2+2\omega^\cL_1) = (n+2m)\omega^\cL_1 + m\omega^\cL_2,\\
		c_{\mathfrak{a}_1}(\omega) &= n(\omega^\cL_1+2\omega^\cL_2) + m\omega^\cL_2 = n\omega^\cL_1 + (2n+m)\omega^\cL_2.
	\end{aligned}
\end{equation*}
In conclusion, the monodromy group $\textnormal{Mon}(\cL)$ is generated by the matrices
$$\left(\begin{matrix}1 & 0\\ 2 & 1\end{matrix}\right),
\qquad
\left(\begin{matrix}1 & 2\\ 0 & 1\end{matrix}\right),$$
which are known to generate freely the subgroup $\Gamma_2$ of $\Gamma(2) \subset \textrm{SL}_2(\bZ)$ consisting of matrices congruent to the identity modulo $2$ and with elements congruent to $1$ modulo $4$ on the diagonal. Hence, we obtain a (surjective) representation $\rho:\pi_1(S) \rightarrow \Gamma_2$ and the equality $\textnormal{Mon}(\cL)=\Gamma_2$. Precisely, we have an action of $\pi_1(S)$ on $\bZ^2$ (viewed as the coordinates of periods with respect to the basis $\omega_1^\cL, \omega_2^\cL$) as follows:
\begin{equation}\label{columnAction}
	g\cdot \left(\begin{matrix}
		n\\
		m
	\end{matrix}\right) =\rho(g)\left(\begin{matrix}
		n\\
		m
	\end{matrix}\right),
\end{equation}
where
$$\rho(g_0) := \left( \begin{matrix}
	1 & 2\\
	0 & 1
\end{matrix} \right),
\qquad 
\rho(g_1) := \left( \begin{matrix}
	1 & 0\\
	2 & 1
\end{matrix} \right).$$

Now consider a general elliptic scheme $\cE\rightarrow B$. By \Cref{twoTors}, we can assume to have a finite map $B \rightarrow S$ which realizes $\cE\rightarrow B$ as pullback of the Legendre elliptic scheme with periods given by \Cref{periodFunctions}. Hence, the monodromy group $\textrm{Mon}(\cE)$ is a finite-index subgroup of $\textrm{SL}_2(\bZ)$. Moreover, since $\Gamma_2$ is Zariski-dense in $\textrm{SL}_2(\bZ)$ and $\textrm{SL}_2(\bZ)$ acts irreducibly on the lattice of periods then the monodromy group $\textrm{Mon}(\cE)$ is Zariski-dense in $\textrm{SL}_2(\bZ)$ and acts irreducibly on the lattice of periods.

\begin{rem}\label{wellDefPeriods}
	The previous considerations yield some conclusion on the good definition of periods on the base $B$. Let $\omega_1, \omega_2$ be a basis for the period lattice, defined locally as complex functions on $U \subset B$, where $U$ is a simply connected open set of $B$. Since the group $\textrm{Mon}(\cE)$ is non-trivial, the functions $\omega_1, \omega_2$ cannot be both defined continuously on the whole of $B$. Moreover, since the action of $\textrm{Mon}(\cE)$ on the lattice of periods is irreducible, neither $\omega_1$ nor $\omega_2$ nor any single non-zero element of the period lattice can be defined on the whole of $B$.
\end{rem} 

\subsubsection{Modular elliptic schemes}

At a certain point, we shall be interested in a particular type of elliptic schemes: the elliptic schemes obtained from the Legendre scheme $\cL \rightarrow S$ by pullback
under an \emph{unramified} cover $p:B \rightarrow S$. These elliptic schemes (or, to be
more precise, their compactifications) are named modular elliptic surfaces in Shioda's paper \cite{Shio}. We call them \emph{modular elliptic schemes} to underline the fact that we only have smooth fibers.

Let $\cE \rightarrow B$ a modular elliptic scheme. Obviously, this is a particular case of an elliptic scheme which satisfies \Cref{pullback}: in addition, we have that $p$ is unramified. Thus, we can define periods as in \Cref{periodFunctions} and consider the associated monodromy action $\rho$ as in \Cref{monRepr}. In this case, \cite[Theorem 5]{Asa} ensures the representation $\rho$ to be faithful: we obtain an embedding $\rho:G \hookrightarrow \Gamma_2$.

Generally, if $\cE \rightarrow B$ is not a modular elliptic scheme, the monodromy action on periods is not faithful. The kernel of the action coincides with the kernel of the induced map $p_*:G \rightarrow \Gamma_2$ between fundamental groups.

In this context, we will be interested in \cite[Theorem 5.1]{Shio} which we will use as stated in \cite[Theorem 2.5]{CZ2}. We write down this last statement, since it will be an important tool in our approaches:
\begin{thm}\label{ShiodaThm}\textbf{\textit{(Shioda's Theorem)}}
	Every (rational) section $\sigma:B\rightarrow \cE$ of a modular elliptic scheme is torsion.
\end{thm}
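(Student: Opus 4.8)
The plan is to translate the statement into the language of the elliptic logarithm and of the Betti (period) coordinates, and then to read off from the monodromy of the logarithm a cohomology class which obstructs torsion. Fix a base point $b$ and set $G=\pi_1(B,b)$. Choosing locally an elliptic logarithm $z$ of $\sigma$, that is a holomorphic (multivalued) function with $\sigma(b)\equiv z(b)\pmod{\Lambda_b}$, I would write $z=x\omega_1+y\omega_2$ and observe that $\sigma$ is torsion of order $N$ exactly when its Betti coordinates $(x,y)$ are constant and lie in $(\tfrac1N\bZ/\bZ)^2$; equivalently, $\sigma$ is torsion precisely when $z$ can be chosen to be a constant rational combination of the period functions $\omega_1,\omega_2$ of \Cref{periodFunctions}. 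This elementary reformulation is the starting point.

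Next I would record the monodromy of $z$. Since $\sigma$ is single-valued and the lattice $\Lambda_b=\bZ\omega_1+\bZ\omega_2$ returns to itself along every loop, analytic continuation gives $c_\gamma(z)=z+a_\gamma\omega_1+b_\gamma\omega_2$ with $(a_\gamma,b_\gamma)\in\bZ^2$. Combining this with the transformation $(c_\gamma\omega_1,c_\gamma\omega_2)=(\omega_1,\omega_2)\rho(\gamma)$ of the periods established above, a direct computation on a product of loops yields the cocycle relation
$$
w_{\gamma\delta}=w_\gamma+\rho(\gamma)\,w_\delta,\qquad w_\gamma:=\begin{pmatrix}a_\gamma\\ b_\gamma\end{pmatrix},
$$
so that $\gamma\mapsto w_\gamma$ defines a class $[w_\sigma]\in H^1(G,\bZ^2)$ for the $\rho$-action; changing the chosen logarithm by a lattice element changes $w_\sigma$ by a coboundary, so the class is well defined. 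One checks directly that if $\sigma$ is torsion then $[w_\sigma]$ is trivial in $H^1(G,\bQ^2)$, the trivialising vector being exactly the constant rational Betti coordinate; thus $[w_\sigma]$ is the obstruction whose vanishing must be proved. The modularity hypothesis enters here through \cite{Asa}: the representation $\rho$ is faithful and, as recalled above, acts irreducibly, so that $(\bQ^2)^G=0$ and the cocycle formalism is non-degenerate.

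The main obstacle is the converse implication, and I expect it to be genuinely \emph{not} topological. Since $B$ is affine its fundamental group is free, so $H^1(G,\bQ^2)$ is in general nonzero and the vanishing of $[w_\sigma]$ cannot follow from group cohomology alone; the decisive extra input is that $z$ is \emph{holomorphic}, i.e. that the associated normal function is a holomorphic section of the intermediate Jacobian, which not every cocycle represents. To close the argument I would invoke Hodge theory: by Deligne's theorem of the fixed part the monodromy-invariant part of the variation $R^1\pi_*\bQ$ is a constant sub-Hodge-structure, and for our non-isotrivial family with Zariski-dense monodromy this invariant part vanishes; a standard normal-function argument then forces the holomorphic logarithm $z$ to be flat, hence $[w_\sigma]$ to be torsion and $\sigma$ to be a torsion section. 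Alternatively, one may run Shioda's original computation on the compactified surface through the Shioda--Tate formula, where vanishing of the Mordell--Weil rank is equivalent to the absence of transcendental cycles, that is to the vanishing of the relevant space of weight-three cusp forms. This Hodge-theoretic step is the crux; everything before it is the topological bookkeeping of the logarithm's monodromy.
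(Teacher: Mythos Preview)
The paper does not prove this statement: \Cref{ShiodaThm} is quoted as a known result, with a reference to Shioda's original paper \cite{Shio} (Theorem~5.1 there) and to its restatement in \cite{CZ2}. It is used as a black box throughout, so there is no proof in the paper to compare your attempt against.

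As for your sketch itself: the cocycle setup is correct and coincides with the formalism the paper develops around \Cref{coboundary} and \Cref{simultaneousRepresentation}. Your identification of the obstruction class $[w_\sigma]\in H^1(G,\bQ^2)$ is right, and its vanishing is indeed equivalent to $\sigma$ being torsion --- though note this equivalence already uses a non-trivial input, namely Mordell--Weil for function fields as in \Cref{wellDefLog}, which you invoke only implicitly. You are also right that, $G$ being free, the vanishing cannot come from group cohomology alone and that the decisive input must be analytic or Hodge-theoretic.

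However, your final paragraph is where the actual content of Shioda's theorem lives, and you have not proved anything there: ``a standard normal-function argument then forces the holomorphic logarithm $z$ to be flat'' and ``run Shioda's original computation \ldots\ through the Shioda--Tate formula'' are pointers to arguments, not arguments. Shioda's own proof proceeds via the second route you mention --- computing the Picard number of the compactified surface and matching it with $h^{1,1}$ using the dimension of a space of cusp forms --- and that computation is genuinely the heart of the matter. Your sketch correctly locates the difficulty but does not resolve it; what you have written is a reduction of the theorem to its known hard core, not a proof.
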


\subsection{Monodromy group of a section}

Now, let us consider a section $\sigma:B\rightarrow \cE$ of an elliptic scheme. Over every point $b \in B$, we have an elliptic exponential map $\textrm{Lie}(\cE_b) \rightarrow \cE_b$, whose kernel is the period lattice. The family of Lie algebras $\textrm{Lie}(\cE)\rightarrow B$ defines a line bundle over $B$ (it is actually a complex Lie algebra bundle), which is trivial in the holomorphic sense and it can then be analytically identified with the product $B\times \bC$. By using the fiberwise exponential maps one can define a global map
$$
\exp: B\times \bC \rightarrow \cE.
$$
Observe that by restricting to $U \times \bC$, we obtain the covering map
$$U\times \bC\rightarrow \cE_{|U},$$
where we are denoting $\pi^{-1}(U)$ by $\cE_{|U}$. Given a simply connected open set $U\subset B$ where periods are well-defined, we define an \emph{elliptic logarithm of $\sigma$ in $U$} as a lifting of $\sigma_{|U}$ to $U\times \bC$: precisely, we define the logarithm as a function $\log_\sigma:U\rightarrow \bC$ which makes commutative the diagram
$$\begin{tikzcd}
	& U\times \bC \arrow{d}\\
	B \supset U \arrow[swap]{r}{\sigma_{|U}} \arrow[dashed]{ur}{(\textnormal{id},\log_\sigma)} & \pi^{-1}(U) \subset \cE.
\end{tikzcd}$$

In this context, we simply mention the fact that considering the real coordinates of elliptic logarithms with respect to the basis of periods we obtain the so called Betti map $\beta_\sigma:U \rightarrow \bR^2$: this is a real-analytic map and is very useful in many problems in `unlikely intersections'.

Similarly to period functions, once we have locally defined the logarithm of a section we can think about its analytic continuation through the whole of $B$. As a first example, let us consider the zero-section $\sigma_0$ which associates to each $b \in B$ the origin $O_b$ of the corresponding fiber $\cE_b$. A logarithm of $\sigma_0$ is given by the zero function
$$\log_{\sigma_0}:B \rightarrow \bC, \qquad \log_{\sigma_0}(b)=0 \textrm{ for each } b \in B.$$
Thus, in this case we can find a well-defined logarithm on the whole of $B$, in fact it has no monodromy. For rational sections, this is the only case in which such a global logarithm exists. For the sake of completeness, we briefly resume some approaches to a proof of this.

\begin{rem}\label{wellDefLog}
	Let's give a sketch of a proof of the following fact: a non-zero rational section $\sigma:B \rightarrow \cE$ cannot admit a well-defined logarithm on the whole of $B$. A way to prove it is by using Mordell-Weil theorem for the Legendre curve over the function field of $B(\bC)$ in a way that appeared also in \cite{Man}. If a non-zero rational section admits an elliptic logarithm which is well defined on the whole of $B(\bC)$, then we may divide it, and hence the section, by any prescribed positive integer and again we have maps well defined on $B(\bC)$. Thus the section would be infinitely divisible on $B(\bC)$ (since the submultiples of the sections would be algebraic and well defined on $B(\bC)$, hence rational on $B(\bC)$). But this violates the Mordell-Weil theorem for the Legendre curve over the function field of $B(\bC)$. In \Cref{wellDefLog2} we recall an alternative argument to prove this statement with different technique.
\end{rem}

In general, the fundamental group $G$ of $B$ acts by monodromy on logarithms of $\sigma$. Clearly, two determinations of the logarithm (defined over an open set $U \subset B$) differ by an integral combination of $\omega_1, \omega_2$. Thus, the action of $G$ on the determinations of the logarithm is as follows: fixed a determination of $\log_\sigma$ and given a loop $\alpha_g$ which represents the homotopy class $g \in G$ we have
\begin{equation}\label{actionLog}
	g\cdot\log_\sigma=c_{\alpha_g}(\log_\sigma)=\log_\sigma + u_g\omega_1 + v_g\omega_2,
\end{equation}
where $u_g, v_g \in \bZ$ and $c_{\alpha_g}$ denotes the analytic continuation along $\alpha_g$. Let us consider the map 
$$G \rightarrow \bZ^2, \qquad g \mapsto (u_g,v_g).$$

Let's continue denoting by $\omega_1, \omega_2$ a basis of periods and by $\log_\sigma$ a logarithm of $\sigma$. Choose $g,h \in G$ and denote by $\bar{g}:=\rho(g), \bar{h}:=\rho(h) \in \textrm{Mon}(\cE)$ their monodromy representations. By looking at the action of $gh$ and recalling \Cref{columnAction}, we have
\begin{align*}
	\log_\sigma \xrightarrow{h} \log_\sigma + (u_h, v_h)\left(\begin{matrix} \omega_1\\ \omega_2\end{matrix}\right) \xrightarrow{g} \log_\sigma + (u_g, v_g)\left(\begin{matrix} \omega_1\\ \omega_2\end{matrix}\right) + (u_h, v_h)\bar{g}^t\left(\begin{matrix} \omega_1\\ \omega_2\end{matrix}\right).
\end{align*}
In other words, we obtain
$$(u_{gh},v_{gh}) = (u_g, v_g) + (u_h, v_h)\bar{g}^t,$$
i.e. the map $g \mapsto (u_g,v_g)$ \emph{is a cocycle for the described action of $G$ on $\bZ^2$}.

The cocycle just defined describes the obstruction for a(n analytic) section to have a well-defined logarithm. In fact, a section which has a well-defined logarithm is characterized by the fact that the map $g\mapsto (u_g,v_g)$ is a coboundary for the above mentioned action, i.e. there exists a fixed vector $(u,v) \in \bZ^2$ such that $(u_g, v_g) = (u,v)(\bar{g}^t-I)$ for all $g \in G$. This statement is clarified in the following proposition:
\begin{prop}\label{coboundary}
	Let $\sigma:B\rightarrow \cE$ be an analytic section and $\log_\sigma$ a determination of its logarithm over an open set $U\subset B$. The section admits a well-defined logarithm on $B$ if and only if the associated cocycle $g \mapsto (u_g,v_g)$ is a coboundary.
\end{prop}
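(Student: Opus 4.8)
The plan is to prove both implications by exhibiting an explicit global logarithm in one direction and reversing the computation in the other, the whole argument being powered by the transformation law of the period functions under monodromy. Throughout I write the periods as a column $\binom{\omega_1}{\omega_2}$ and recall the convention already fixed in the derivation of the cocycle relation $(u_{gh},v_{gh}) = (u_g,v_g) + (u_h,v_h)\bar g^t$: since the coordinates of a fixed period transform by left multiplication by $\bar g := \rho(g)$ (the column action of \Cref{columnAction}), the period functions themselves transform by the transpose, namely $c_{\alpha_g}\binom{\omega_1}{\omega_2} = \bar g^t\binom{\omega_1}{\omega_2}$.

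First I would treat the ``only if'' direction. Suppose $\sigma$ admits a logarithm $L$ that is single-valued on the whole of $B$. On the connected open set $U$ both $L$ and $\log_\sigma$ are lifts of $\sigma|_U$ along the exponential covering, so their difference takes values in the period lattice; as $\omega_1,\omega_2$ is a basis and this difference is continuous and $\bZ$-valued in those coordinates, there are fixed integers $u,v$ with $\log_\sigma = L + (u,v)\binom{\omega_1}{\omega_2}$ on $U$. Continuing along a loop $\alpha_g$, using $c_{\alpha_g}(L)=L$ together with the transformation law of the periods, gives $c_{\alpha_g}(\log_\sigma) = L + (u,v)\bar g^t\binom{\omega_1}{\omega_2}$. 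Comparing with the defining relation $c_{\alpha_g}(\log_\sigma)=\log_\sigma + (u_g,v_g)\binom{\omega_1}{\omega_2}$ of \eqref{actionLog} yields $(u_g,v_g) = (u,v)(\bar g^t - I)$, i.e. the cocycle is the coboundary of $(u,v)$.

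For the converse I would run the same computation backwards. Assuming $(u_g,v_g) = (u,v)(\bar g^t - I)$ for a fixed $(u,v)\in\bZ^2$ and all $g$, set $L := \log_\sigma - (u,v)\binom{\omega_1}{\omega_2}$ on $U$; this is again a logarithm of $\sigma$, differing from $\log_\sigma$ by a period. A direct continuation along each $\alpha_g$, substituting both the defining relation for $c_{\alpha_g}(\log_\sigma)$ and the law $c_{\alpha_g}\binom{\omega_1}{\omega_2}=\bar g^t\binom{\omega_1}{\omega_2}$, shows that the total coefficient of $\binom{\omega_1}{\omega_2}$ collapses to $(u,v) + (u_g,v_g) - (u,v)\bar g^t = 0$ precisely by the coboundary hypothesis, so $c_{\alpha_g}(L) = L$ for every $g\in G$.

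It remains to pass from ``trivial monodromy along every loop'' to ``globally well-defined''. Here I would use that $\log_\sigma$, being a lift of $\sigma$ along the covering map $\exp$, can be analytically continued along every path in $B$ by the path-lifting property, hence so can $L$; and since $c_{\alpha_g}(L)=L$ for all $g\in G=\pi_1(B,b)$, the continuation of $L$ depends only on the endpoint and not on the homotopy class of the path, so by the monodromy theorem $L$ glues to a single-valued holomorphic logarithm on the connected base $B$. I expect no serious obstacle: the content is entirely carried by the cocycle relation already established before the statement, and the only delicate points are bookkeeping ones, namely fixing the transpose convention so that the two computations are genuinely inverse to each other, and justifying (by continuity and connectedness of $U$) that the local difference of two logarithms is a \emph{constant} integral combination of periods.
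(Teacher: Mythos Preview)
Your proof is correct and follows essentially the same approach as the paper's: in both directions you write the candidate global logarithm as $\log_\sigma$ minus a fixed integral combination $(u,v)\binom{\omega_1}{\omega_2}$ and compute its monodromy using the transpose action on periods, exactly as the paper does. Your version is slightly more explicit in two places---you justify via continuity why the integral combination is constant on $U$, and you invoke the monodromy theorem to pass from trivial monodromy to a single-valued function---which the paper leaves implicit.
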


\begin{proof}
	Suppose that $\sigma$ admits a well-defined logarithm $\ell:B \rightarrow \bC$. Then the two determinations $\ell$ and $\log_\sigma$ over $U$ differ by a period $\omega:=n\omega_1+m\omega_2$, i.e.
	$$\log_\sigma=\ell + n\omega_1 + m\omega_2.$$
	For $g \in G$, we have
	\begin{align*}
		g\cdot\log_\sigma &= g\cdot (\ell + n\omega_1 + m\omega_2) = \ell + (n,m)\bar{g}^t\left(\begin{matrix} \omega_1 \\ \omega_2\end{matrix}\right)=\\
		&= \log_\sigma + \left[(n,m)\bar{g}^t-(n,m)\right] \left(\begin{matrix} \omega_1 \\ \omega_2\end{matrix}\right).
	\end{align*}
	Thus the corresponding cocycle is given by
	$$g\mapsto (n,m)\bar{g}^t-(n,m)$$
	for $g \in G$ and a fixed pair $(n,m) \in \bZ^2$, hence it is a coboundary.
	
	Viceversa, let us suppose to have a logarithm $\log_\sigma$ over $U$ and that there exists a fixed pair $(n,m) \in \bZ^2$ such that
	$$g\cdot\log_\sigma = \log_\sigma + \left[(n,m)\bar{g}^t-(n,m)\right] \left(\begin{matrix} \omega_1 \\ \omega_2\end{matrix}\right)$$
	for each $g \in G$. Let us define the function
	$$\ell:=\log_\sigma - n\omega_1 - m \omega_2,$$
	which is another determination of the logarithm of $\sigma$. Looking at the action of $G$ we obtain
	\begin{align*}
		g\cdot \ell &= g\cdot(\log_\sigma - n\omega_1 - m \omega_2) =\\
		&=\log_\sigma + \left[(n,m)\bar{g}^t-(n,m)\right] \left(\begin{matrix} \omega_1 \\ \omega_2\end{matrix}\right) - (n,m)\bar{g}^t \left(\begin{matrix} \omega_1 \\ \omega_2\end{matrix}\right)=\\
		&= \log_\sigma - n\omega_1 - m \omega_2 =\ell.
	\end{align*}
	Therefore, $\ell$ is a well-defined logarithm of $\sigma$ on the whole of $B$.\\
\end{proof}

The above argument leads to consider the cohomology group $H^1(G,\bZ^2)$, which is the method used in \cite{CZ2}.

Now, let us look at the simultaneous monodromy action of $G$ on periods and logarithm. By \Cref{columnAction} and \Cref{actionLog}, we can provide a new representation $$\theta_\sigma: G \rightarrow \textrm{SL}_3(\bZ),$$
where every matrix $\theta_\sigma(g)$ is of the form
\begin{equation}\label{simultaneousRepresentation}
	\theta_\sigma(g)=\left( \begin{matrix} \rho(g) & w_g \\ 0 & 1 \end{matrix} \right),
\end{equation}
where $w_g=(u_g,v_g)^t \in \bZ^2$. Note that the matrix $\rho(g)$ acts on the periods as specified in \Cref{columnAction}, and does not depend on $\sigma$. Moreover, the vector $w_g$ encodes the action of $g$ on determinations of the logarithm $\log_\sigma$ as in \Cref{actionLog}. Define the monodromy group of the section $\sigma$ as $M_\sigma:=\theta_\sigma(G)$.

\begin{rem}\label{wellDefLog2}
	As mentioned in \Cref{wellDefLog}, we recap an alternative argument to prove that a non-zero rational section $\sigma:B \rightarrow \cE$ cannot admit a well-defined logarithm on the whole of $B$. In this case, we use the just introduced representation $\theta_{\sigma}$ and the mentioned theorem of Bertrand, which we state here for the sake of completeness:
	\begin{thm}\label{BertrandThm}
	If the (rational) section $\sigma:B \rightarrow \cE$ is non-torsion, the kernel of the homomorphism $\theta_\sigma(G)^\textrm{Zar} \rightarrow \textrm{SL}_2$ is isomorphic to $\mathbb{G}_a^2$. In particular, the algebraic group $\theta_\sigma(G)^\textrm{Zar}$ has dimension five.
\end{thm}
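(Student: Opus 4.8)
The plan is to read off the structure of $H := \theta_\sigma(G)^{\textrm{Zar}}$ from its embedding in the block-upper-triangular group
$$
P := \left\{ \begin{pmatrix} A & w \\ 0 & 1 \end{pmatrix} : A \in \textrm{SL}_2,\ w \in \mathbb{G}_a^2 \right\} \subset \textrm{SL}_3,
$$
which contains $M_\sigma$ by \Cref{simultaneousRepresentation} (note $\det = \det A = 1$). This $P$ is a connected linear algebraic group with unipotent radical the vector group $R_u(P) = \mathbb{G}_a^2$ (the matrices with $A = I$) and Levi factor the standard $\textrm{SL}_2$ (the matrices with $w = 0$); in particular $\dim P = 3 + 2 = 5$. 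Let $\pi : P \to \textrm{SL}_2$ be the projection $\begin{pmatrix} A & w \\ 0 & 1\end{pmatrix} \mapsto A$, so that $\pi \circ \theta_\sigma = \rho$. The kernel of the homomorphism $\theta_\sigma(G)^{\textrm{Zar}} \to \textrm{SL}_2$ in the statement is exactly $U := H \cap \mathbb{G}_a^2$, and the whole argument reduces to proving $U = \mathbb{G}_a^2$; the dimension count $\dim H = \dim \pi(H) + \dim U = 3 + 2 = 5$ then gives the final assertion.

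First I would identify the image. Since $\pi$ is a homomorphism of algebraic groups, $\pi(H)$ is closed and equals the Zariski closure of $\pi(M_\sigma) = \rho(G) = \textrm{Mon}(\cE)$, which by the Zariski-density recalled just before \Cref{wellDefPeriods} is all of $\textrm{SL}_2$. Next I would pin down the possible shapes of $U$. A direct computation shows that conjugation inside $P$ acts on $R_u(P) = \mathbb{G}_a^2$ through $\pi$ by the standard representation, namely
$$
\begin{pmatrix} A & w' \\ 0 & 1\end{pmatrix}\begin{pmatrix} I & w \\ 0 & 1\end{pmatrix}\begin{pmatrix} A & w' \\ 0 & 1\end{pmatrix}^{-1} = \begin{pmatrix} I & Aw \\ 0 & 1\end{pmatrix}.
$$
As $U$ is normal in $H$ and $\pi(H) = \textrm{SL}_2$, the subgroup $U$ is a closed (hence, in characteristic zero, linear) $\textrm{SL}_2$-stable subspace of $\mathbb{G}_a^2$. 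By the irreducibility of the standard action (also recalled before \Cref{wellDefPeriods}), the only possibilities are $U = 0$ or $U = \mathbb{G}_a^2$.

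It remains to exclude $U = 0$, and this is where the non-torsion hypothesis enters. If $U = 0$, then $\pi|_H : H \to \textrm{SL}_2$ is a bijective morphism of algebraic groups, so $H$ is a reductive Levi subgroup of $P$ complementary to $R_u(P)$. By the conjugacy of Levi subgroups in characteristic zero (equivalently, by the vanishing of the rational cohomology $H^1(\textrm{SL}_2, \mathbb{G}_a^2) = 0$) there is $v \in \bC^2$ with $H = u\,\textrm{SL}_2\,u^{-1}$ for $u = \begin{pmatrix} I & v \\ 0 & 1\end{pmatrix}$, whence $w_g = (\rho(g) - I)v$ for every $g \in G$. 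Because $\textrm{Mon}(\cE)$ has no nonzero common fixed vector (again by Zariski-density, a fixed line being excluded by irreducibility), the map $v \mapsto (w_g)_g$ is injective with integer-matrix components, so $w_g \in \bZ^2$ forces $v \in \bQ^2$. Choosing $N \geq 1$ with $Nv \in \bZ^2$, the section $[N]\circ\sigma$ has logarithm $N\log_\sigma$ and associated cocycle $g \mapsto N w_g = (\rho(g)-I)(Nv)$, an integral coboundary. By \Cref{coboundary} the section $[N]\circ\sigma$ would then admit a logarithm well-defined on all of $B$; but $[N]\circ\sigma$ is again non-torsion, contradicting \Cref{wellDefLog}. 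Hence $U = \mathbb{G}_a^2$, as wanted.

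The step I expect to be the main obstacle is precisely this last exclusion of $U = 0$. The clean algebraic output of Levi conjugacy, that $g \mapsto w_g$ is a coboundary \emph{over $\bC$}, is a priori weaker than the integral coboundary condition needed to invoke \Cref{coboundary}; bridging the two is exactly where the arithmetic input (the non-divisibility of a non-torsion section, via Mordell--Weil as in \Cref{wellDefLog}) must be inserted, after clearing denominators by passing to $[N]\circ\sigma$. Everything else is formal structure theory of the parabolic-type group $P$ together with the Zariski-density and irreducibility already established for $\textrm{Mon}(\cE)$.
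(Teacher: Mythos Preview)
The paper does not prove this theorem; it is quoted from Bertrand (with a generalization by Andr\'e) and used as a black box in \Cref{wellDefLog2}. There is therefore no in-paper argument to compare against.

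Your proof is correct and self-contained modulo the facts the paper already records (Zariski-density and irreducibility of $\textrm{Mon}(\cE)$, \Cref{coboundary}, and the Mordell--Weil argument of \Cref{wellDefLog}). The structure-theoretic reduction to $U\in\{0,\mathbb{G}_a^2\}$ via irreducibility of the standard $\textrm{SL}_2$-action on $R_u(P)$ is clean, and the exclusion of $U=0$ through Levi conjugacy followed by clearing denominators is exactly the right bridge from a complex coboundary to an integral one. Two small points are worth making explicit. First, the conjugation computation actually gives $w_g=(I-\rho(g))v$ rather than $(\rho(g)-I)v$; this is harmless since $v$ can absorb the sign. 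Second, to pass from ``the injective integer-coefficient map $v\mapsto((\rho(g)-I)v)_g$ takes integral values'' to ``$v\in\bQ^2$'', the quickest route is to pick a single $g$ with $\det(\rho(g)-I)\neq 0$---such $g$ exists because $\textrm{Mon}(\cE)$ is Zariski-dense in $\textrm{SL}_2$ and the locus $\{\textrm{tr}=2\}$ is proper closed---and then $v=(\rho(g)-I)^{-1}w_g\in\bQ^2$ directly. Finally, note that you must invoke \Cref{wellDefLog} (the Mordell--Weil argument) and not \Cref{wellDefLog2} at the end, since the latter relies on the very theorem you are proving; you already do this, but it is worth flagging to avoid any appearance of circularity.
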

	If a rational section $\sigma$ admits a logarithm, then after conjugation $\theta_\sigma(g)$ has $w_g=(0,0)^t$ for all $g \in G$. This means that $\theta_\sigma(G)^{\textrm{Zar}}$ has not dimension five, thus $\sigma$ is a torsion section by Bertrand's theorem. Then a logarithm of $\sigma$ has the form $\log_\sigma=q_1\omega_1 + q_2\omega_2$, where $q_1, q_2 \in \bQ$. We can see, by explicit computation, that such a function is well-defined on the whole of $B$ if and only if $q_1=q_2=0$, i.e. $\sigma$ is the zero-section.
\end{rem}

\subsection{Relative monodromy group of a section}

Let's continue considering an elliptic scheme $\cE \rightarrow B$ and a rational non-zero section $\sigma:B \rightarrow \cE$. By our previous considerations, neither a basis of periods nor a logarithm $\log_\sigma$ can be well-defined on the whole of $B$. Anyway, the relevant functions can be globally defined on the universal cover $\bH$ of $B$; note that $B$ is hyperbolic in view of our hypothesis that the scheme is not isotrivial. Studying the related monodromy problems corresponds to finding out the minimal (unramified) cover of $B$ on which both a basis of the periods and a logarithm of the section can be defined. With this in mind, we first call $B^* \rightarrow B$ the minimal (unramified) cover of $B$ on which a basis for the period lattice can be globally defined and we set $B_\sigma \rightarrow B^*$ to be the minimal cover of $B^*$ where one can define the logarithm of $\sigma$. The tower of covers is represented in the diagram:
\begin{equation}\label{monodromyDiagram}
	\bH \rightarrow B_\sigma \rightarrow B^* \rightarrow B.
\end{equation}
In particular, the group $\textnormal{Mon}(\cE)$ corresponds to the Galois group of the covering map $B^* \rightarrow B$, while the group $M_\sigma$ corresponds to the Galois group of the covering map $B_\sigma\rightarrow B$. Our interest is in studying the relative monodromy of the logarithm of a section with respect to the monodromy of periods, i.e. studying the covering map $B_\sigma \rightarrow B^*$. Topologically, this is the same as looking at the variation of logarithm via analytic continuation along loops on $B$ which leave periods unchanged. Moreover, in terms of \Cref{monRepr} and \Cref{simultaneousRepresentation} this corresponds to studying the group $M_\sigma^{\textnormal{rel}}:=\theta_\sigma(\ker{\rho})$, which we define as \emph{relative monodromy group of $\sigma$}.

\begin{rem}
	Note that the Zariski-closures of the discrete groups $\textnormal{Mon}(\cE), M_\sigma$ and $M_\sigma^{\textnormal{rel}}$ are the differential Galois groups of some Picard-Vessiot extensions of $\bC(B)$ obtained with $\omega_1, \omega_2, \log_\sigma$ and their derivatives (see \cite{CH} for further details about differential Galois theory and Picard-Vessiot extensions). In these terms, the differential Galois group $\overline{M_\sigma^\textnormal{rel}}$ was just determined in \cite{B}; anyway, this result says nothing on the relative monodromy of the logarithm over $B^*$ because it involves the Zariski closure of $M_{\sigma}$, and this information is not as strong as we want. In fact, in \cite{CZ1} and \cite{CZ2} the authors pointed out that it may happen that the Zariski closure of a group contains quite limited information on the group itself since it may be larger than expected in comparison with the group itself. Thus, in order to obtain information on $M_\sigma^\textnormal{rel}$, we have to introduce considerations of different nature with respect to Bertrand's theorem.
\end{rem}

In \cite[Theorem 2.1]{CZ2} the authors proved a result which we can state as follows:

\begin{thm}\label{mainTheorem}
	If $\sigma: B \rightarrow \cE$ is a non-torsion (rational) section, then $M_\sigma^{\textnormal{rel}}\cong \bZ^2$.
\end{thm}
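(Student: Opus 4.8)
The plan is to reduce the statement to a single non-triviality assertion, which I would then upgrade to full rank by a representation-theoretic argument. First I would examine $\theta_\sigma$ on $\ker\rho$. For $g\in\ker\rho$ we have $\rho(g)=I$, so by \Cref{simultaneousRepresentation} each $\theta_\sigma(g)$ is unipotent of the form $\left(\begin{smallmatrix} I & w_g\\ 0 & 1\end{smallmatrix}\right)$; moreover the cocycle relation $w_{gh}=w_g+\rho(g)w_h$ degenerates on $\ker\rho$ (where the action is trivial) to $w_{gh}=w_g+w_h$. Hence $\phi\colon\ker\rho\to\bZ^2$, $g\mapsto w_g$, is a homomorphism, and $\theta_\sigma(g)\mapsto w_g$ identifies $M_\sigma^{\textnormal{rel}}=\theta_\sigma(\ker\rho)$ with the subgroup $\phi(\ker\rho)\subseteq\bZ^2$. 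In particular $M_\sigma^{\textnormal{rel}}$ is free abelian of rank at most two, and the theorem becomes the assertion that this rank is exactly two.

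Next I would show that $\phi(\ker\rho)$ is $\textnormal{Mon}(\cE)$-stable. Since $\ker\rho\trianglelefteq G$, for $h\in G$ and $g\in\ker\rho$ we have $hgh^{-1}\in\ker\rho$, and feeding the cocycle relation together with the identity $\rho(h)w_{h^{-1}}=-w_h$ yields $w_{hgh^{-1}}=\rho(h)\,w_g$. Thus $\phi$ is equivariant and $M_\sigma^{\textnormal{rel}}$ is a $\textnormal{Mon}(\cE)$-stable subgroup of $\bZ^2$. Recalling that $\textnormal{Mon}(\cE)$ acts irreducibly on the period lattice (see \Cref{wellDefPeriods}), any nonzero $\textnormal{Mon}(\cE)$-stable subgroup must span $\bQ^2$ and therefore have rank two. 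This reduces the entire theorem to the single fact that $M_\sigma^{\textnormal{rel}}\neq 0$.

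The remaining non-triviality is the heart of the matter: I must exhibit one loop in $B$ that fixes the periods yet moves the logarithm. I would first stress that Bertrand's theorem (\Cref{BertrandThm}) is \emph{not} enough, since it only pins down the Zariski closure $\overline{M_\sigma}$, and along the unipotent direction a discrete group can be far thinner than its closure — exactly the cautionary phenomenon recorded in the excerpt. Instead I would proceed topologically, reducing by \Cref{twoTors} to the Legendre pullback $p\colon B\to S$ and seeking $\Gamma\in\ker\rho$ of commutator type \[ \Gamma=\alpha_1\delta_i\alpha_i^{-1}\delta_i^{-1}, \] where $\alpha_1,\alpha_i$ have the same image $p\circ\alpha_1=p\circ\alpha_i$ in $S$ and $\delta_i$ is a path changing the basepoint while carrying no period monodromy; by construction such a $\Gamma$ induces the trivial action on periods, so $\Gamma\in\ker\rho$, and everything then rests on certifying that the logarithm really changes along it.

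Two genuine difficulties remain, and I expect the second to be the main obstacle. The first is the construction of the period-preserving connecting paths $\delta_i$: this forces one to gain control over the branch locus of $p$ and the ramification locus of $\sigma$, so that $\delta_i$ can be routed through the fibre $p^{-1}(s)$ without acquiring period monodromy. The second, harder point is to guarantee $c_\Gamma(\log_\sigma)\neq\log_\sigma$; for this I would bring in the Mordell–Weil theorem over the function field $\bC(B)$ together with Shioda's theorem (\Cref{ShiodaThm}) to control the variation of $\log_\sigma$ along $\alpha_1$ and $\alpha_i$ — the underlying principle being that if no period-preserving loop could move the logarithm, then $\sigma$ would be forced to be infinitely divisible (contradicting Mordell–Weil) or to descend to a torsion section of a modular elliptic scheme (contradicting Shioda), against the hypothesis that $\sigma$ is non-torsion. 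Once $M_\sigma^{\textnormal{rel}}\neq 0$ is secured, the argument above upgrades it to $M_\sigma^{\textnormal{rel}}\cong\bZ^2$, completing the proof.
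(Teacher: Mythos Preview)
Your algebraic reduction is correct and is precisely what the paper calls ``a purely algebraic matter'': the identification of $M_\sigma^{\textnormal{rel}}$ with $\phi(\ker\rho)\subseteq\bZ^2$, the equivariance $w_{hgh^{-1}}=\rho(h)w_g$, and the appeal to irreducibility of the $\textnormal{Mon}(\cE)$-action to upgrade non-triviality to rank two all match the paper's argument exactly. You have also correctly guessed the shape $\Gamma=\alpha_1\delta_i\alpha_i^{-1}\delta_i^{-1}$ of the decisive loop and identified Mordell--Weil and Shioda as the governing inputs.

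Where your proposal falls short is in the non-triviality step itself, which you leave as a heuristic that is not quite the actual mechanism. Your phrasing ``if no period-preserving loop could move the logarithm, then $\sigma$ would be forced to be infinitely divisible'' would require Mordell--Weil over $\bC(B^*)$, but $B^*\to B$ is an infinite cover (since $\textnormal{Mon}(\cE)$ is infinite), so $B^*$ is not an algebraic curve and that argument does not run. The paper's route is different and more constructive: it factors $p=p_2\circ p_1$ through an \emph{unramified} cover $p_2:B'\to S$ (so $\cL'\to B'$ is modular), applies Shioda not to $\sigma$ but to the \emph{trace} $\textnormal{Tr}_{p_1}(\sigma)\in\cL'(B')$ to arrange $\sum_i\sigma(b_i)=0$ over the fibre $p_1^{-1}(b')=\{b_1,\ldots,b_N\}$, and uses Mordell--Weil on $B$ (not $B^*$) only to pick a generator $\alpha_1$ of $\pi_1(B,b_1)$ with $\omega_{\alpha_1}\neq 0$. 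The trace condition then forces $\sum_i\omega_{\alpha_i}=0$, so some $\omega_{\alpha_i}\neq\omega_{\alpha_1}$, and a direct analytic-continuation computation gives $c_\Gamma(\log_\sigma)=\log_\sigma(b_1)+c_{\alpha_i^{-1}}(\omega_{\alpha_1}-\omega_{\alpha_i})\neq\log_\sigma(b_1)$. The construction of the period-preserving paths $\delta_i$ also hinges on this factorization (via the monodromy of $p_1$ on $\mathcal{B}'\setminus A$), which your outline does not supply. In short: right architecture, right ingredients, but the bridge from ``Mordell--Weil $+$ Shioda'' to ``$c_\Gamma(\log_\sigma)\neq\log_\sigma$'' requires the intermediate modular cover $B'$ and the trace operator, and without them the argument does not close.
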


Note that $M_\sigma^{\textnormal{rel}}$ is clearly a subgroup of $\bZ^2$; the previous theorem asserts that this subgroup is as large as possible (up to isomorphisms). Let us illustrate what happens in the case not covered by \Cref{mainTheorem}, i.e. when the section is torsion; moreover the example aims at introducing our topological approach more concretely.

\begin{exe}\label{exaTorsSec}
	Let us consider a torsion section $\sigma:B \rightarrow \cE$. By the properties of Betti map, any logarithm of $\sigma$ is a rational constant combination of periods; i.e.
	$$
	\log_\sigma = q_1 \omega_1 + q_2 \omega_2,
	$$
	where $q_1, q_2 \in \bQ$. Therefore, a loop which leaves unchanged periods via analytic continuation, leaves also unchanged the logarithm of such a section. In other words, the cover $B_\sigma \rightarrow B^*$ is trivial in this case and then $M_\sigma^\textnormal{rel}\cong \{0\}$.
\end{exe}

\section{Preliminary results and first thoughts on effectivity}

\subsection{Invariance results}

Let's start by proving some preliminary results about \Cref{mainTheorem} and the group $M_\sigma^\textnormal{rel}$.

\begin{lemma}\label{baseChange}
	Theorem $\ref{mainTheorem}$ is invariant by finite base change: in other words, if $\varphi: \widetilde{B} \rightarrow B$ is a finite morphism and $\widetilde{\cE}$ is the pullback of $\cE\rightarrow B$ via $\varphi$, then the theorem for $\widetilde{\cE} \rightarrow \widetilde{B}$ implies the theorem for $\cE \rightarrow B$.
\end{lemma}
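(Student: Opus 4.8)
The plan is to pull the section back along $\varphi$ and exploit the fact that all the relevant monodromy data of $\widetilde\cE\to\widetilde B$ is obtained from that of $\cE\to B$ by precomposition with $\varphi_*$. Concretely, let $\sigma:B\to\cE$ be a non-torsion section and let $\widetilde\sigma:\widetilde B\to\widetilde\cE$ be its pullback, i.e.\ the section with $\widetilde\sigma(\tilde b)=\sigma(\varphi(\tilde b))$ under the identification $\widetilde\cE_{\tilde b}\isom\cE_{\varphi(\tilde b)}$. First I would check that $\widetilde\sigma$ is again non-torsion: since $\varphi$ is finite (hence dominant, so surjective on the curves), if $N\widetilde\sigma$ were the zero section then $N\sigma$ would vanish on $\varphi(\widetilde B)=B$, forcing $\sigma$ to be torsion, a contradiction. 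Fixing compatible base points $\tilde b\in\widetilde B$ and $b=\varphi(\tilde b)\in B$ away from ramification, I would then record the two structural identities
\[
\rho_{\widetilde B}=\rho_B\circ\varphi_*,\qquad \theta_{\widetilde\sigma}=\theta_\sigma\circ\varphi_*,
\]
which hold because the fibers of $\widetilde\cE$ over $\tilde b$ and of $\cE$ over $b$ are literally the same elliptic curve, the periods of $\widetilde\cE$ are $\omega^\cL_i\circ(p\circ\varphi)$ and a logarithm of $\widetilde\sigma$ is $\log_\sigma\circ\varphi$, so that analytic continuation along a loop $\tilde\gamma$ in $\widetilde B$ is analytic continuation along $\varphi\circ\tilde\gamma$ in $B$.

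Given these identities, the core computation is
\[
M_{\widetilde\sigma}^{\textnormal{rel}}=\theta_{\widetilde\sigma}(\ker\rho_{\widetilde B})=\theta_\sigma\bigl(\varphi_*(\varphi_*^{-1}(\ker\rho_B))\bigr)=\theta_\sigma\bigl(\operatorname{im}\varphi_*\cap\ker\rho_B\bigr),
\]
using $\ker\rho_{\widetilde B}=\varphi_*^{-1}(\ker\rho_B)$ together with the elementary identity $\varphi_*(\varphi_*^{-1}(H))=H\cap\operatorname{im}\varphi_*$. In particular $M_{\widetilde\sigma}^{\textnormal{rel}}\subseteq M_\sigma^{\textnormal{rel}}=\theta_\sigma(\ker\rho_B)$. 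The point I would need here is that $\operatorname{im}\varphi_*$ has finite index in $G=\pi_1(B,b)$: removing the finite branch locus $\Delta\subset B$ and its preimage turns $\varphi$ into a genuine degree-$d$ topological covering, whose image on fundamental groups has index $d$, and the surjections $\pi_1(\varphi^{-1}(B\setminus\Delta))\twoheadrightarrow\pi_1(\widetilde B)$ and $\pi_1(B\setminus\Delta)\twoheadrightarrow\pi_1(B)$ then show $\operatorname{im}\varphi_*$ has index at most $d$ in $G$. Consequently $\operatorname{im}\varphi_*\cap\ker\rho_B$ is a finite-index subgroup of $\ker\rho_B$.

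Finally I would run the rank argument. By hypothesis the theorem holds for $\widetilde\cE\to\widetilde B$, so $M_{\widetilde\sigma}^{\textnormal{rel}}\isom\bZ^2$; as already noted just after \Cref{mainTheorem}, $M_\sigma^{\textnormal{rel}}$ is a subgroup of $\bZ^2$. We thus have a chain $M_{\widetilde\sigma}^{\textnormal{rel}}\subseteq M_\sigma^{\textnormal{rel}}\subseteq\bZ^2$ in which the leftmost group already has rank $2$; hence $M_\sigma^{\textnormal{rel}}$ has rank $2$ as well, and being a subgroup of $\bZ^2$ it is isomorphic to $\bZ^2$, which is the theorem for $\cE\to B$. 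I expect the main obstacle to be the careful bookkeeping around the ramification of $\varphi$: justifying that $\operatorname{im}\varphi_*$ is finite-index and that the monodromy identities survive at the (punctured, possibly ramified) interior points, rather than the concluding group-theoretic step, which is routine.
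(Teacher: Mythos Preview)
Your proof is correct and follows essentially the same route as the paper: both pull back the section, record the identities $\rho_{\widetilde B}=\rho_B\circ\varphi_*$ and $\theta_{\widetilde\sigma}=\theta_\sigma\circ\varphi_*$, deduce $M_{\widetilde\sigma}^{\textnormal{rel}}=\theta_\sigma(\operatorname{im}\varphi_*\cap\ker\rho_B)\subseteq M_\sigma^{\textnormal{rel}}$, and conclude by the sandwich $\bZ^2\isom M_{\widetilde\sigma}^{\textnormal{rel}}\subseteq M_\sigma^{\textnormal{rel}}\subseteq\bZ^2$. Note that your finite-index discussion for $\operatorname{im}\varphi_*$ is superfluous, since the final rank argument already goes through from the bare containment; the paper simply omits that step.
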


\begin{proof}
	Let us denote by $\omega_1, \omega_2$ a basis of periods of $\cE \rightarrow B$. We can define $\widetilde{\omega_1}, \widetilde{\omega_2}$, a basis of periods of $\widetilde{\cE} \rightarrow \widetilde{B}$, by the equations
	\begin{equation}\label{eqn3}
		\widetilde{\omega_1}=\omega_1 \circ \varphi, \qquad \widetilde{\omega_2}=\omega_2 \circ \varphi.
	\end{equation}
	Fix two base points $\widetilde{b} \in \widetilde{B}$ and $b \in B$ such that $\varphi(\widetilde{b})=b$; we omit to explicitly write them in the fundamental group notation. If we denote by $\bar{G}, \widetilde{G}$ the monodromy groups of periods of $\cE \rightarrow B$, $\widetilde{\cE} \rightarrow \widetilde{B}$ respectively, we obtain the associated monodromy representations:
	$$\rho:\pi_1(B) \rightarrow \bar{G}, \qquad \widetilde{\rho}:\pi_1(\widetilde{B}) \rightarrow \widetilde{G}.$$
	By $(\ref{eqn3})$, we have $\widetilde{\rho}(g)=\rho(\varphi_*(g))$ for each $g \in \pi_1(\widetilde{B})$ (here $\varphi_*:\pi_1(\widetilde{B})\rightarrow \pi_1(B)$ is the induced map between fundamental groups). In particular, we obtain
	\begin{equation}\label{eqn3_bis}
		\varphi_*(\ker{\widetilde{\rho}})=\ker{\rho} \cap \varphi_*(\pi_1(\widetilde{B})).
	\end{equation}
	Let $\sigma:B \rightarrow \cE$ be a non-torsion section of $\cE \rightarrow B$. Since $\widetilde{\cE}\rightarrow \widetilde{B}$ is obtained as pullback of $\cE \rightarrow B$, then the elliptic curves $\widetilde{\cE}_{\widetilde{b}}$ and $\cE_b$ are canonically identified; therefore the pullback $\varphi^*(\sigma):=\sigma\circ \varphi:\widetilde{B} \rightarrow \widetilde{\cE}$ is a non-torsion section of $\widetilde{\cE} \rightarrow \widetilde{B}$. We have the associated monodromy representations
	$$\theta_\sigma:\pi_1(B) \rightarrow \textrm{SL}_3(\bZ), \qquad \theta_{\varphi^*(\sigma)}:\pi_1(\widetilde{B}) \rightarrow \textrm{SL}_3(\bZ).$$
		
	Using again the fact that the elliptic curves $\widetilde{\cE}_{\widetilde{b}}$ and $\cE_b$ are canonically identified, we obtain that a determination of the logarithm of $\varphi^*(\sigma)$ over $\widetilde{b}$ can be defined by the equation
	\begin{equation}\label{eqn4}
		\log_{\varphi^*(\sigma)}(\widetilde{b}):=\log_\sigma(b).
	\end{equation}
	Hence, we have $\theta_{\varphi^*(\sigma)}(g)=\theta_\sigma(\varphi_*(g))$ for each $g \in \pi_1(\widetilde{B})$.
	
	Since we are supposing that \Cref{mainTheorem} holds for $\widetilde{\cE}\rightarrow \widetilde{B}$, then we have
	$$
	M_{\varphi^*(\sigma)}^{\textnormal{rel}}=\theta_{\varphi^*(\sigma)}(\ker{\widetilde{\rho}}) \isom \bZ^2.$$
	By \Cref{eqn3_bis} and \Cref{eqn4}, this means
	$$
	\theta_{\sigma}(\ker{\rho} \cap \varphi_*(\pi_1(\widetilde{B}))) = \theta_\sigma(\varphi_*(\ker{\widetilde{\rho}})) = M_{\varphi^*(\sigma)}^{\textnormal{rel}} \isom \bZ^2.$$
	Since $\ker{\rho} \cap \varphi_*(\pi_1(\widetilde{B})) \subset \ker{\rho}$, we get $M_{\varphi^*(\sigma)}^{\textnormal{rel}} \subseteq M_{\sigma}^{\textnormal{rel}}$. This implies
	$$
	M_\sigma^{\textnormal{rel}} \isom \bZ^2,
	$$
	which is equivalent to saying that Theorem $\ref{mainTheorem}$ holds for $\cE \rightarrow B$.\\
\end{proof}

Let us consider a non-torsion section $\sigma:B \rightarrow \mathcal{E}$. Note that the representation $\theta_\sigma$ is defined in terms of \Cref{actionLog}, thus it depends on the branch of logarithm we fix. Here, we want to prove that $M_\sigma^\textnormal{rel}$ is independent of the choice of branch of $\log_\sigma$ and remains unchanged under some operations on the section.

\begin{prop}\label{relMonGrp}
	Let $\sigma: B \rightarrow \cE$ be a non-torsion section. Then
	\begin{itemize}
		\item[(i)] the group $M_\sigma^\textnormal{rel}$ does not depend on the choice of branch of $\log_\sigma$;
		
		\item[(ii)] the groups $M_\sigma^\textnormal{rel}$ and $M_{n\sigma}^\textnormal{rel}$ are isomorphic.
	\end{itemize}
\end{prop}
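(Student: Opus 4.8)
For part (i), the plan is to exploit the fact that any two branches of $\log_\sigma$ over the fixed open set $U$ differ by a fixed period $\omega = n\omega_1 + m\omega_2$ with $(n,m)\in\bZ^2$. Writing $\log_\sigma' = \log_\sigma + n\omega_1 + m\omega_2$, I would compute how the cocycle $g \mapsto (u_g,v_g)$ changes when we pass from one branch to the other. As in the computation preceding \Cref{coboundary}, continuing $\log_\sigma'$ along a loop $\alpha_g$ and using \Cref{columnAction} shows that the new cocycle differs from the old one by the coboundary $g \mapsto (n,m)(\bar g^t - I)$. The key observation is then that, when we restrict to $g \in \ker\rho$, we have $\bar g = \rho(g) = I$, so the coboundary term $(n,m)(\bar g^t - I)$ vanishes identically. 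Hence the two cocycles agree on $\ker\rho$, and therefore the vectors $w_g$ appearing in $\theta_\sigma(g)$ for $g \in \ker\rho$ are unchanged. Since $M_\sigma^\textnormal{rel} = \theta_\sigma(\ker\rho)$ is determined precisely by these vectors (the matrix $\rho(g)$ is the identity on $\ker\rho$ and does not depend on $\sigma$), the group is independent of the branch chosen.

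For part (ii), the plan is to relate the logarithm of $n\sigma$ to that of $\sigma$ via the fiberwise exponential. Since $\exp$ is a group homomorphism on each fiber $\textrm{Lie}(\cE_b) \to \cE_b$, multiplication by $n$ on the section corresponds to multiplication by $n$ on the logarithm, so one may take $\log_{n\sigma} = n\log_\sigma$ as a determination over $U$. Continuing this relation along a loop $\alpha_g$ and comparing with \Cref{actionLog}, linearity of analytic continuation gives $c_{\alpha_g}(\log_{n\sigma}) = n\, c_{\alpha_g}(\log_\sigma) = \log_{n\sigma} + n(u_g\omega_1 + v_g\omega_2)$, so the cocycle attached to $n\sigma$ is exactly $n$ times the cocycle attached to $\sigma$. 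Restricting to $g \in \ker\rho$, the vectors $w_g$ for $n\sigma$ are $n w_g$, whence $M_{n\sigma}^\textnormal{rel} = n\, M_\sigma^\textnormal{rel}$ as subgroups of $\bZ^2$. Multiplication by $n$ is an injective group homomorphism $\bZ^2 \to \bZ^2$, so it restricts to an isomorphism $M_\sigma^\textnormal{rel} \xrightarrow{\;\sim\;} M_{n\sigma}^\textnormal{rel}$, giving the claim.

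I expect both parts to be essentially formal once the right identities are set up; the only subtlety worth stating carefully is why restricting to $\ker\rho$ is exactly what kills the coboundary ambiguity in (i), which is the conceptual heart of why $M_\sigma^\textnormal{rel}$ (unlike the full cocycle class, which lives in $H^1(G,\bZ^2)$) is a genuinely well-defined subgroup of $\bZ^2$ rather than only a cohomology class. The mildest technical care needed is to confirm that $n\sigma$ is again non-torsion when $\sigma$ is, so that the setup of the statement applies; this is immediate since $\sigma$ non-torsion means it has infinite order in the Mordell-Weil group, and $n\sigma$ then also has infinite order.
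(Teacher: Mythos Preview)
Your proposal is correct and follows essentially the same approach as the paper: for (i), both you and the paper use that two branches differ by a period and that periods are invariant along loops in $\ker\rho$, so the variation of $\log_\sigma$ along such loops is branch-independent; for (ii), both take $\log_{n\sigma}=n\log_\sigma$ and observe that the variations scale by $n$, giving $M_{n\sigma}^{\textnormal{rel}}=n\,M_\sigma^{\textnormal{rel}}$. Your write-up is simply more explicit in casting (i) through the cocycle/coboundary formalism and in spelling out why multiplication by $n$ yields an isomorphism, but the underlying argument is the same.
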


\begin{proof}
	$(i)$: Choose two branches of $\log_\sigma$, say $\ell_\sigma^1$ and $\ell_\sigma^2$ and denote by $M_{\sigma,1}^\textnormal{rel}$ and $M_{\sigma,2}^\textnormal{rel}$ the corresponding relative monodromy groups. Since the two branches $\ell_\sigma^1$ and $\ell_\sigma^2$ differs by a period, we get the thesis. In fact, since any loop $\alpha$ in $B$ whose homotopy class lies in $\ker{\rho}$ leaves periods unchanged, then $\ell_\sigma^1$ and $\ell_\sigma^2$ have the same variation by analytic continuation along $\alpha$. Thus we get $M_{\sigma,1}^\textnormal{rel}=M_{\sigma,2}^\textnormal{rel}$.
	
	$(ii)$: Fixed a branch $\ell_\sigma$ of $\log_\sigma$, a determination $\ell_{n\sigma}$ of logarithm of $n\sigma$ can be defined by the equation
	$$
	\ell_{n\sigma}=n\ell_\sigma.
	$$
	For any loop $\alpha$ we have the corresponding variations:
	$$
	c_\alpha(\ell_{n\sigma})=\ell_{n\sigma}+\omega_\alpha^{n\sigma}, \qquad  c_\alpha(\ell_\sigma) = \ell_\sigma+\omega_\alpha^\sigma,
	$$
	where $\omega_\alpha^{n\sigma}=n\omega_\alpha^\sigma$. Thus $M_\sigma^\textnormal{rel}$ and $M_{n\sigma}^\textnormal{rel}$ are isomorphic.\\
\end{proof}

\subsection{Some `effective' considerations and results}

Let $\cE \rightarrow B$ an elliptic scheme which satisfies \Cref{pullback} and look at the relative monodromy group $M_\sigma^{\textnormal{rel}}$ of a section $\sigma$. When the morphism $p:B \rightarrow S$ is unramified we are dealing with a modular elliptic scheme; since the representation $\rho$ is faithful in this case, we obtain $\ker{\rho}=\{0\}$.\footnote{As a consequence of faithfulness of $\rho$ and \Cref{mainTheorem}, we find again \Cref{ShiodaThm}. However, note that this is not a new proof of Shioda's theorem, since the proof of \Cref{mainTheorem} exploits \Cref{ShiodaThm} in an essential way.} On the other hand, when the morphism $p$ is ramified and we have a non-torsion section $\sigma:B\rightarrow \cE$, as a consequence of \Cref{mainTheorem} we get $\ker{\rho}\neq \{0\}$. Thus, asking about the shape of loops whose homotopy class lies in $\ker{\rho}$ is a rather natural question. Here, we want to say something more about the group $\ker{\rho}$ and obtain an `effective' result on its shape in terms of loops.

Consider the Legendre elliptic scheme $\cL \rightarrow S$ and denote by $R=\{r_1, \ldots, r_k\}$ the branch locus of the morphism $p$. Denote the generators of the free group $\pi_1(S- R,s)$ by $\mathfrak{a}_0, \mathfrak{a}_1, \mathfrak{d}_i$: to be more precise, $\mathfrak{a}_0, \mathfrak{a}_1$ are homotopy classes of small loops around $0,1$ respectively; $\mathfrak{d}_i$ is the homotopy class of a small loop around $r_i$. We will denote by the symbol $\langle Y \rangle$ the free group generated by a set $Y$. Let's put
\begin{equation}\label{D0}
X^{(0)}:=\{\mathfrak{d}_i : i =1, \ldots, k\}, \qquad D^{(0)}:=\langle X^{(0)} \rangle
\end{equation}
and define by recursion
$$
X^{(n)}:=\{\mathfrak{a}\mathfrak{d}\mathfrak{a}^{-1}\mathfrak{d}^{-1} : \mathfrak{a}\in\{\mathfrak{a}_0,\mathfrak{a}_1\}, \mathfrak{d}\in D^{(n-1)}\}, \qquad D^{(n)}:=\langle \bigcup_{i=0}^nX^{(i)} \rangle.
$$
Define $K:=\langle \bigcup_{n \in \bN} D^{(n)} \rangle$.

\begin{thm}\label{commutators}
	We have
	$$
	\ker{\rho_B}=p_*^{-1}(K).
	$$
\end{thm}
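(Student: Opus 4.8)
The plan is to turn the statement into a single group-theoretic identity inside the free group $F:=\pi_1(S-R,s)$ and then prove that identity. The point of departure is that the periods of $\cE\to B$ are the pullbacks $\omega_i=\omega_i^\cL\circ p$ of the Legendre periods, so the monodromy of periods along a loop contained in $B^\circ:=p^{-1}(S-R)$ depends only on the image of that loop in $F$. Working on $B^\circ$ — any relevant class may be represented avoiding the finite set $p^{-1}(R)$ — the unramified covering $p\colon B^\circ\to S-R$ supplies the injection $p_*$ and the factorization $\rho_B=\bar\rho\circ p_*$, where $\bar\rho\colon F\to\Gamma_2$ is the homomorphism with $\bar\rho(\mathfrak{a}_0)=\rho(g_0)$, $\bar\rho(\mathfrak{a}_1)=\rho(g_1)$ and $\bar\rho(\mathfrak{d}_i)=I$; the last equality holds because each $r_i\in S$ is a smooth fibre of $\cL\to S$ and hence carries no period monodromy. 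Thus $\ker\rho_B=p_*^{-1}(\ker\bar\rho)$, and the theorem follows once $\ker\bar\rho$ is identified and compared with $K$.

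Computing $\ker\bar\rho$ is immediate: since $\rho(g_0),\rho(g_1)$ generate $\Gamma_2$ freely, the restriction of $\bar\rho$ to $\langle\mathfrak{a}_0,\mathfrak{a}_1\rangle$ is an isomorphism onto $\Gamma_2$ while all the $\mathfrak{d}_i$ are killed; hence $\bar\rho$ is the retraction of $F$ onto $\langle\mathfrak{a}_0,\mathfrak{a}_1\rangle$ collapsing the $\mathfrak{d}_i$, and its kernel is the normal closure of the $\mathfrak{d}_i$,
\[
\ker\bar\rho=\langle\langle\,\mathfrak{d}_1,\dots,\mathfrak{d}_k\,\rangle\rangle .
\]
I would therefore prove the algebraic identity $K=\langle\langle\mathfrak{d}_1,\dots,\mathfrak{d}_k\rangle\rangle$ by two inclusions and apply $p_*^{-1}$ at the end.

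The inclusion $K\subseteq\ker\bar\rho$ is the easy half and goes by induction on $n$: one has $X^{(0)}=\{\mathfrak{d}_i\}\subseteq\ker\bar\rho$, and if $D^{(n-1)}\subseteq\ker\bar\rho$ then each generator $\mathfrak{a}\mathfrak{d}\mathfrak{a}^{-1}\mathfrak{d}^{-1}$ of $X^{(n)}$ maps under $\bar\rho$ to the commutator of $\bar\rho(\mathfrak{a})$ with $\bar\rho(\mathfrak{d})=I$, hence to $I$; so $X^{(n)}$, then $D^{(n)}$, and finally $K$ stay inside $\ker\bar\rho$. For the reverse inclusion I would show that $K$ contains each $\mathfrak{d}_i$ and is normal in $F$, which forces $K\supseteq\langle\langle\mathfrak{d}_i\rangle\rangle$. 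Normality is checked on the generators of $F$: conjugation by a $\mathfrak{d}_j$ is harmless, since $\mathfrak{d}_j\in D^{(0)}\subseteq K$, and conjugation by $\mathfrak{a}_\epsilon$ is governed by the identity $\mathfrak{a}_\epsilon\mathfrak{d}\mathfrak{a}_\epsilon^{-1}=(\mathfrak{a}_\epsilon\mathfrak{d}\mathfrak{a}_\epsilon^{-1}\mathfrak{d}^{-1})\,\mathfrak{d}$, whose two factors lie respectively in $X^{(n+1)}$ and in $D^{(n)}$, both inside $K$, for every $\mathfrak{d}\in D^{(n)}$.

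The step I expect to be the main obstacle is exactly the closure of $K$ under conjugation by the inverse letters $\mathfrak{a}_\epsilon^{-1}$: the recursion forms commutators only with the positive generators $\mathfrak{a}_0,\mathfrak{a}_1$, so an inverse-conjugate $\mathfrak{a}_\epsilon^{-1}\mathfrak{d}\mathfrak{a}_\epsilon$ is not one of the listed generators and must be recovered from them. This is precisely the point where the nested structure of the $D^{(n)}$ is essential — one must feed $\mathfrak{d}^{-1}\in D^{(n)}$ into the construction and iterate so that the inverse-conjugates are produced from the positive ones already absorbed — and it is what upgrades the one-sided closure to genuine normality. I would treat this as the heart of the argument and verify it with care, since a naive single application of the commutator identity yields only the positive conjugates; once normality is secured, the two inclusions give $K=\ker\bar\rho$, and pulling back along $p_*$ yields $\ker\rho_B=p_*^{-1}(K)$.
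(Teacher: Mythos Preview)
Your reduction to the identity $K=\ker\bar\rho=\langle\langle\mathfrak d_1,\dots,\mathfrak d_k\rangle\rangle$ is exactly what the paper does (phrased there via the inclusion $i:S\setminus R\hookrightarrow S$ and the claim $K=i_*^{-1}(1)$), and the easy inclusion $K\subseteq\ker\bar\rho$ is handled identically. For the hard inclusion the paper does not argue via normality of $K$: it runs a combinatorial induction on the number of letters from $\{\mathfrak a_0^{\pm1},\mathfrak a_1^{\pm1}\}$ appearing in a reduced word for $g\in i_*^{-1}(1)$, writing $g=\mathfrak d\,\mathfrak a\,\mathfrak h\,\mathfrak a^{-1}\,\mathfrak f$ with $\mathfrak h,\mathfrak f$ of strictly smaller such length, and then rewriting $g=\mathfrak d\,(\mathfrak a\mathfrak h\mathfrak a^{-1}\mathfrak h^{-1})\,\mathfrak h\,\mathfrak f$ to conclude by induction.

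The obstacle you isolate---closure of $K$ under conjugation by the inverse letters $\mathfrak a_\epsilon^{-1}$---is genuine and, with the definition of $X^{(n)}$ as literally written, cannot be overcome. Unwinding the recursion gives
\[
D^{(n)}=\bigl\langle\, w\,\mathfrak d_i\,w^{-1}\ :\ i,\ w\ \text{a positive word in }\mathfrak a_0,\mathfrak a_1\ \text{of length}\le n\,\bigr\rangle,
\]
hence $K=\langle\, w\,\mathfrak d_i\,w^{-1}:w\ \text{positive}\,\rangle$. But by Reidemeister--Schreier the normal closure $N=\langle\langle\mathfrak d_i\rangle\rangle$ is \emph{free} on the full family $\{w\,\mathfrak d_i\,w^{-1}:w\ \text{a reduced word in }\mathfrak a_0^{\pm1},\mathfrak a_1^{\pm1}\}$; since every positive word is reduced, $K$ is the subgroup generated by a proper subset of a free basis of $N$, and in particular $\mathfrak a_0^{-1}\mathfrak d_1\mathfrak a_0\notin K$. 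The paper's inductive step runs into precisely the same point: the leading letter $\mathfrak a$ there is taken from $\{\mathfrak a_0^{\pm1},\mathfrak a_1^{\pm1}\}$, yet $\mathfrak a\mathfrak h\mathfrak a^{-1}\mathfrak h^{-1}$ is tacitly placed in $D^{(n)}$. Both your argument and the paper's go through cleanly once one reads the definition of $X^{(n)}$ with $\mathfrak a\in\{\mathfrak a_0,\mathfrak a_1,\mathfrak a_0^{-1},\mathfrak a_1^{-1}\}$; under that reading your normality check is immediate and the proof is complete.
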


\begin{proof}
	Let us consider the inclusion $i:S- R \hookrightarrow S$ and the induced homomorphism $i_*$ of fundamental groups. We claim that
	$$
	i_{*}^{-1}(1)=K.
	$$
	First of all note that $D^{(0)} \subseteq i_{*}^{-1}(1)$. By induction, we easily get $D^{(n)} \subseteq i_{*}^{-1}(1)$ for any $n \in \bN$. Hence we have the inclusion $K \subseteq i_{*}^{-1}(1)$.
	
	Now, we prove the reverse inclusion. Observe that each element $g \in \pi_{1}(S- R,s)$ is a word in the alphabet
	$$
	\mathfrak{A}:=\{1, \mathfrak{a}_{0}, \mathfrak{a}_{1}, \mathfrak{d}_{1}, \ldots, \mathfrak{d}_{k}, \mathfrak{a}^{-1}_{0}, \mathfrak{a}^{-1}_{1}, \mathfrak{d}^{-1}_{1}, \ldots, \mathfrak{d}^{-1}_{k}\}.
	$$
	Define the subset $\mathfrak{A}':=\{\mathfrak{a}_{0}, \mathfrak{a}_{1}, \mathfrak{a}^{-1}_{0}, \mathfrak{a}^{-1}_{1}\}$. We write down words without using exponentiation, so that when writing $g$ as a word we intend that each factor lies in $\mathfrak{A}$. Now, consider a reduced word $g$ and delete all elements of the type $\mathfrak{d}_{j}$; label the remaining elements by $\mathfrak{e}_{1}, \ldots, \mathfrak{e}_{m}$, where each $\mathfrak{e}_{j}$ is an element of $\mathfrak{A}'$. Note that the element $i_{*}(g)$ is obtained by $g$ exactly deleting all elements of the type $\mathfrak{d}_{j}$.
	
	From now on, we only consider elements $g,h \in i_{*}^{-1}(1)$. Since $\pi_{1}(S,s)$ is a free group, the condition $g \in i_{*}^{-1}(1)$ implies that $m$ is even and moreover there exists a permutation $\tau$ of the set $\{1, \ldots, m\}$ such that $\mathfrak{e}_{\tau(j)}$ is the inverse of $\mathfrak{e}_{j}$. Such a permutation is not unique, anyway using again that $\pi_{1}(S,s)$ is a free group we can assume that $\tau$ satisfies the following property: the reduced word $g$ is of the form $\mathfrak{d}\mathfrak{e}_{1}\mathfrak{h}\mathfrak{e}_{\tau(1)}\mathfrak{f}$ where $\mathfrak{h},\mathfrak{f} \in i_{*}^{-1}(1)$.
	
	We want to prove that $i_{*}^{-1}(1)\subseteq K$. To this end, define the integer $n:=m/2$ for $g \in i_{*}^{-1}(1)$ and proceed by induction over $n$; the integer $n$ will be called \emph{relative length of $g$}. If $n=0$, then $g \in D^{(0)}$ and we are done. Now, suppose that if $h$ is a reduced word of relative length $< n$ lying in $i_{*}^{-1}(1)$ then $h \in D^{(n-1)}$ and consider a reduced word $g$ of relative length $n$. Let $\mathfrak{a}$ be the first element of $\mathfrak{A}'$ which appears in the word $g$ in left-right order, i.e. $\mathfrak{a}:=\mathfrak{e}_{1}$ in our previous notation. Then, thanks to the above considerations when defining $\tau$, we can state that the element $g$ is of the form
	$$
	\mathfrak{d}\mathfrak{a}\mathfrak{h}\mathfrak{a}^{-1}\mathfrak{f},
	$$
	where $\mathfrak{h}, \mathfrak{f}$ are reduced words of relative length $< n$ lying in $i_{*}^{-1}(1)$ and moreover $\mathfrak{a} \in \mathfrak{A}'$, $\mathfrak{d} \in D^{(0)}$. By inductive hypothesis, we get $\mathfrak{h}, \mathfrak{f} \in D^{(n-1)}$. With simple manipulations we get
	$$
	g=\mathfrak{d}(\mathfrak{a}\mathfrak{h}\mathfrak{a}^{-1}\mathfrak{h}^{-1})\mathfrak{h}\mathfrak{f},
	$$
	which proves $g \in D^{(n)}$ and consequently the claim. By \Cref{periodFunctions} we finally obtain
	$$
	\ker{\rho_B}=p_*^{-1}(K).
	$$	
\end{proof}

\Cref{commutators} gives a way of finding explicit loops in $B$ which leaves periods unchanged: we obtain them as liftings of some loops in $S$ whose homotopy classes lie in $K$. Clearly, not all the elements of $K$ provide loops in $B$: in general such a lifting is a path. However, making use of considerations about ramification of the morphism $p$ we can always find some of them. We exhibit some of them in the next sections.

Before closing this section, we prove an invariance result in the spirit of \Cref{baseChange}. In other terms, the following lemma states that the effectivity of \Cref{mainTheorem} is invariant under finite base change. However, we will be more precise about the meaning of `effectivity of \Cref{mainTheorem}' in the next sections, which entirely concern this topic.

\begin{lemma}\label{baseChangeEffective}
	Let $\varphi: \widetilde{B} \rightarrow B$ be a finite morphism and let $\widetilde{\cE}$ be the pullback of $\cE\rightarrow B$ via $\varphi$. Let $\widetilde{\alpha}$ be a loop in $\widetilde{B}$ whose homotopy class $g$ is such that $g \in \ker{\widetilde{\rho}}$ and $\theta_{\varphi^{*}(\sigma)}(g)\neq 0$. Then $\alpha:=\varphi\circ\widetilde{\alpha}$ is a loop in $B$ whose homotopy class $h$ satisfies $h \in \ker{\rho}$ and $\theta_{\sigma}(h)\neq 0$.
\end{lemma}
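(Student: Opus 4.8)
The plan is to reduce everything to the two functoriality identities between fundamental groups that were already established inside the proof of \Cref{baseChange}, namely $\widetilde\rho = \rho\circ\varphi_*$ and $\theta_{\varphi^*(\sigma)} = \theta_\sigma\circ\varphi_*$, where $\varphi_*:\pi_1(\widetilde B,\widetilde b)\to\pi_1(B,b)$ is the induced homomorphism for a compatible pair of base points $\widetilde b$ and $b=\varphi(\widetilde b)$. These hold because periods pull back by $\widetilde\omega_i=\omega_i\circ\varphi$ (so the period monodromy upstairs is the pullback of the one downstairs) and because a logarithm of $\varphi^*(\sigma)$ may be defined by $\log_{\varphi^*(\sigma)}(\widetilde b):=\log_\sigma(b)$. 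Once these are in hand, the lemma is an immediate transport of the relations of \Cref{baseChange} from subgroups to individual elements.

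First I would observe that $\alpha=\varphi\circ\widetilde\alpha$ is genuinely a loop in $B$: since $\widetilde\alpha(0)=\widetilde\alpha(1)=\widetilde b$ we get $\alpha(0)=\alpha(1)=\varphi(\widetilde b)=b$. Moreover, by the very definition of the induced map on fundamental groups, its homotopy class is $h=[\varphi\circ\widetilde\alpha]=\varphi_*([\widetilde\alpha])=\varphi_*(g)$.

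Then both conclusions follow by direct substitution. Using $\rho\circ\varphi_*=\widetilde\rho$ together with the hypothesis $g\in\ker\widetilde\rho$ gives $\rho(h)=\rho(\varphi_*(g))=\widetilde\rho(g)=I$, whence $h\in\ker\rho$. Using $\theta_\sigma\circ\varphi_*=\theta_{\varphi^*(\sigma)}$ together with the hypothesis $\theta_{\varphi^*(\sigma)}(g)\neq 0$ gives $\theta_\sigma(h)=\theta_\sigma(\varphi_*(g))=\theta_{\varphi^*(\sigma)}(g)\neq 0$; since $h\in\ker\rho$ forces $\rho(h)=I$, the matrix $\theta_\sigma(h)$ lies in the unipotent block $\bigl(\begin{smallmatrix} I & w_h \\ 0 & 1\end{smallmatrix}\bigr)$, and the nonvanishing is read precisely on the displacement vector $w_h$, i.e. it says that the relative monodromy action on the logarithm is nontrivial. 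This is exactly the claimed conclusion.

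I do not expect a genuine obstacle here beyond bookkeeping; the lemma is essentially the assertion that a \emph{good} loop (one witnessing nontrivial relative monodromy) is preserved under pushforward by a finite cover. The only points that require care are (a) invoking the two functoriality identities with the correct base points $\widetilde b$ and $b=\varphi(\widetilde b)$, since $\rho$ and $\theta_\sigma$ are defined only up to conjugation when the base point moves, and (b) recording that the equality of matrices $\theta_\sigma(h)=\theta_{\varphi^*(\sigma)}(g)$ entails equality of their displacement vectors, so that $w_h\neq 0$ is inherited verbatim from upstairs.
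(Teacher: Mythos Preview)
Your proposal is correct and follows essentially the same approach as the paper. The paper's own proof consists of a single line citing the identities established in \Cref{baseChange} (specifically $\widetilde\rho(g)=\rho(\varphi_*(g))$ and $\theta_{\varphi^*(\sigma)}(g)=\theta_\sigma(\varphi_*(g))$, together with their consequences recorded as \Cref{eqn3_bis} and \Cref{eqn4}); you have simply unpacked the same argument in full detail, including the verification that $h=\varphi_*(g)$ and the interpretation of ``$\neq 0$'' in terms of the displacement vector $w_h$.
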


\begin{proof}
	This is straightforward by \Cref{eqn3_bis} and \Cref{eqn4}.\\
\end{proof}

\section{Effective proof}\label{effectiveProofSection}

In this section we prove the main (effective) result of the paper. To this end, we focus on \Cref{mainTheorem}. Once we have proved that $M_\sigma^\textnormal{rel}$ is non-trivial we could get the complete theorem by a purely algebraic argument losing a little effectivity in this final step; anyway, in the last part of the paper we provide an alternative way of proving that $M_{\sigma}^{\textnormal{rel}}\cong \bZ^{2}$ which has the advantage of being effective (see \Cref{mainTheoremStrong}). Thus, here we focus on the crucial step of the proof, that is proving $M_\sigma^\textnormal{rel}$ is non-trivial. In other words, the statement $M_\sigma^\textnormal{rel}\neq \{0\}$ means that there exists a loop in $B$ along which periods can be continuously defined but logarithm cannot be defined.

Our strategy is \emph{to explicitly find such a loop $\Gamma$ in $B$ with a certain base point $b_1$, along which the logarithm changes value in $b_1$ but periods don't}, which gives an effective answer to the problem.

In order to be more precise, assuming we have an elliptic scheme $\cE \rightarrow B$ which is pullback of the Legendre elliptic scheme by a finite map $p:B \rightarrow \bP_{1}-\{0,1,\infty\}$, we are going to construct a suitable finite subset $\{b_1, \ldots, b_{N}\}$ of $B$ and to prove the following result:

\begin{thm}\label{mainTheoremTrue}
	Let $\sigma:B \rightarrow \cE$ be a non-torsion section of the elliptic scheme. There exists a loop $\Gamma$ in $B$ which induces trivial monodromy action on periods but non-trivial monodromy action on the logarithm; actually, there exists an index $i \in \{1, \ldots, N\}$ such that $\Gamma$ is of the form
	$$
	\Gamma=\alpha_{1}\delta_{i}\alpha_{i}^{-1}\delta_{i}^{-1},
	$$
	where $\alpha_{1}$ is a loop whose homotopy class is one of the generators of the fundamental group of $B$ (with base point $b_{1}$), $\alpha_{i}$ is a loop based at $b_{i}$ which satisfies $p\circ \alpha_{i} = p\circ \alpha_{1}$, and $\delta_{i}$ is a path from $b_{1}$ to $b_{i}$ which leaves periods unchanged via analytic continuation. In particular, the relative monodromy group of logarithm of $\sigma$ with respect to periods of $\cE \rightarrow B$ is non-trivial, hence isomorphic to $\bZ^2$.
\end{thm}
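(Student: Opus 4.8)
The plan is to reduce to the Legendre pullback (already granted by \Cref{twoTors} and the base-change invariance of \Cref{baseChange}) and then to exhibit a loop $\Gamma$ of the announced shape whose monodromy fixes the periods but shifts $\log_\sigma$ by a nonzero lattice vector. First I would record a structural consequence of non-torsion: if $p\colon B\to S$ were unramified the scheme would be modular, so \Cref{ShiodaThm} would force $\sigma$ to be torsion; hence $p$ is ramified and its branch locus $R\subset S$ is nonempty. Fix a basepoint $s\in S\setminus(R\cup\{0,1,\infty\})$ and work inside the fiber $p^{-1}(s)=\{b_1,\dots\}$, from which the announced subset $\{b_1,\dots,b_N\}$ is extracted.

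Next I would build the period-preserving connecting paths $\delta_i$. The point is that a small loop in $S$ encircling a branch point $r\in R$ is contractible in $S$ (since $r\neq 0,1,\infty$, the Legendre periods are holomorphic near $r$), so by \Cref{commutators} its class lies in $K$ and every lift to $B$ leaves the periods unchanged via analytic continuation; because $r$ is a genuine ramification value, the lift starting at $b_1$ terminates at a different sheet $b_i$, producing a path $\delta_i$ from $b_1$ to $b_i$ with trivial period monodromy. Choosing $\alpha_1$ a generator of $\pi_1(B,b_1)$ and $\alpha_i$ a loop at $b_i$ with $p\circ\alpha_i=p\circ\alpha_1$, the equality of $S$-projections gives the same period-monodromy matrix for $\alpha_1$ and $\alpha_i$, and together with the period-triviality of $\delta_i$ this makes $\Gamma=\alpha_1\delta_i\alpha_i^{-1}\delta_i^{-1}$ act trivially on periods, i.e. $[\Gamma]\in\ker\rho$.

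The heart of the computation is the log-variation of $\Gamma$. Tracking the analytic continuation of $\log_\sigma$ along the four segments, the shift $\eta_i$ picked up along $\delta_i$ (an elliptic logarithm of $\sigma(b_i)-\sigma(b_1)$, in general not a period) is exactly undone along $\delta_i^{-1}$, because in between $\alpha_i^{-1}$ only alters $\log_\sigma$ by a period and returns the section point to $\sigma(b_i)$; the two non-integral contributions therefore cancel and one is left with
$$
w_\Gamma=w_{\alpha_1}-w_{\alpha_i},
$$
both log-monodromy vectors being read off in the common period basis at $s$. Thus the problem is reduced to producing a generator $\alpha_1$ and a reachable sheet $b_i$ for which the log-monodromy of the lifted loop genuinely changes, $w_{\alpha_1}\neq w_{\alpha_i}$.

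To see that this is possible I would argue by contradiction, exploiting that on $\ker\rho$ the cocycle $g\mapsto w_g$ restricts to a genuine homomorphism $\ker\rho\to\bZ^2$ (the matrix part is trivial there), whose image is exactly $M_\sigma^{\textnormal{rel}}$. If $w_{\alpha_1}=w_{\alpha_i}$ for every generator and every reachable sheet, then $w$ would vanish on all loops of the above commutator type and on the pure lifts of the $\mathfrak{d}_i$; by \Cref{commutators} these generate $\ker\rho$, so $w\equiv 0$ on $\ker\rho$ and $\log_\sigma$ would descend to a single-valued function on the period cover $B^*$ (where the periods are already single-valued). Since $B^*\to B$ is a finite cover and $\sigma$ pulls back to a nonzero section there, this contradicts \Cref{wellDefLog} (the Mordell--Weil/infinite-divisibility argument). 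Hence some $\Gamma$ of the stated form has $w_\Gamma\neq 0$, giving $M_\sigma^{\textnormal{rel}}\neq 0$; the upgrade to $M_\sigma^{\textnormal{rel}}\cong\bZ^2$ is then supplied by \Cref{mainTheorem}. The main obstacle I anticipate is the effective matching in the last step: controlling the ramification of $p$ so that the lift $\delta_i$ lands precisely on a sheet where $\sigma$ differs (the ``control of the branch locus of the section''), and guaranteeing that $p\circ\alpha_1$ lifts to an honest loop $\alpha_i$ based at $b_i$, rather than merely proving non-vanishing abstractly.
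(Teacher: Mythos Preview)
Your overall strategy---reduce to a Legendre pullback, build period-preserving paths $\delta_i$ between sheets, and take $\Gamma=\alpha_1\delta_i\alpha_i^{-1}\delta_i^{-1}$---matches the paper's. But the contradiction step you propose to force $w_{\alpha_1}\neq w_{\alpha_i}$ has two genuine gaps.

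First, $B^*\to B$ is \emph{not} a finite cover. Its Galois group is $\mathrm{Mon}(\cE)=\rho(\pi_1(B))$, a finite-index subgroup of $\Gamma_2$, hence infinite; so $B^*$ is an infinite-sheeted cover of $B$, $\bC(B^*)$ is not the function field of a curve, and the Mordell--Weil argument of \Cref{wellDefLog} does not apply there. Second, your claim that the commutator-type loops together with the lifts of the $\mathfrak{d}_j$ generate $\ker\rho$ is not justified: by \Cref{commutators} one needs \emph{all} the nested commutator groups $D^{(n)}$, not just $D^{(0)}$ and single commutators $\mathfrak{a}\mathfrak{d}\mathfrak{a}^{-1}\mathfrak{d}^{-1}$; and you give no reason why $w$ vanishes on the pure lifts of the $\mathfrak{d}_j$ (or their powers, which are the actual loops in $B$). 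A smaller slip: the log-variation of $\Gamma$ is $c_{\alpha_i^{-1}}(\omega_{\alpha_1}-\omega_{\alpha_i})$, not $w_{\alpha_1}-w_{\alpha_i}$; this does not affect non-vanishing but your cancellation bookkeeping omits the monodromy action on the period part.

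The paper avoids the contradiction altogether and argues directly. It factors $p=p_2\circ p_1$ with $p_2\colon B'\to S$ \emph{unramified} (so $\cL'\to B'$ is modular) and $p_1\colon B\to B'$ Galois (\Cref{controlBranch}); the announced set $\{b_1,\dots,b_N\}$ is the fiber $p_1^{-1}(b')$, and the $\delta_i$ connect these points via the monodromy of $p_1$ (\Cref{delta_ij}). The trace $\mathrm{Tr}_{p_1}(\sigma)$ is a section of the modular scheme $\cL'\to B'$, hence torsion by \Cref{ShiodaThm}; after scaling and choosing branches (\Cref{assumptionLog}) one gets $\sum_i\log_\sigma(b_i)=0$, and then continuity along the common projection $\alpha'$ forces $\sum_i\omega_{\alpha_i}=0$ (\Cref{control_omegai}). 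Since $\omega_{\alpha_1}\neq 0$ by \Cref{wellDefLog} applied on $B$ itself, some $\omega_{\alpha_i}\neq\omega_{\alpha_1}$, and the explicit $\Gamma$ follows. The Galois property of $p_1$ is exactly what guarantees that $p_1(\alpha_1)$ lifts to an honest loop $\alpha_i$ at each $b_i$---the obstacle you anticipated.
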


\subsection{Setup of the proof}
We keep all the notations introduced above. Morever, we shall always use the symbol $\mathcal{B}$ to denote the compactification of an affine curve $B$. We will do a little abuse of notation by denoting with the same letter a morphism $p:B\rightarrow S$ between affine curves and its extension $p:\mathcal{B} \rightarrow \bP_1$. We denote by $\alpha, \beta, \ldots$ paths and loops in $B$, while we denote by $\mathfrak{a}, \mathfrak{b}, \ldots$ their projection in $S$ via the morphism $p$. In particular, we denote by $\mathfrak{a}_0$ and $\mathfrak{a}_1$ two fixed and small enough circles centered at $0$ and $1$, respectively. Note that we can choose all paths in $B$ and $S$ such that they avoid all the ramification points and the branch points of $p: B\rightarrow S$, respectively.

Let's start with some preliminary considerations. Fix a non-torsion section $\sigma$ of an elliptic scheme $\cE \rightarrow B$. By \Cref{twoTors}, \Cref{baseChange} and \Cref{baseChangeEffective}, in what follows we can always assume that $\cE \rightarrow B$ is obtained as pullback of $\cL \rightarrow S$ via a finite Galois morphism $p: B\rightarrow S$. In this case, the section $\sigma$ is also called an algebraic section of $\cL \rightarrow S$, i.e. we have the following diagram
\begin{equation}\label{diagramScheme}
	\begin{tikzcd}
		& & & \cE \arrow{r} \arrow{d} & \cL \arrow{d}\\
		\bH \arrow{r} & B_\sigma \arrow{r} & B^* \arrow{r}  & B \arrow{r}{p} \arrow[bend left]{u}{\sigma} & S,
	\end{tikzcd}
\end{equation}
where we are referring to the tower of covers in \Cref{monodromyDiagram}. By \Cref{ShiodaThm} the section $\sigma$ is ramified, in the sense that the morphism $p: B\rightarrow S$ introduced above is ramified. However, we look at the branch points of $\sigma$ in $\mathbb{P}_1$ admitting also $0,1,\infty$ as possible branch points, i.e. we are interested in the branch points of $p:\mathcal{B} \rightarrow \mathbb{P}_1$. We will denote the branch locus of $p:\mathcal{B} \rightarrow \mathbb{P}_1$ by $\mathcal{R}$ and the branch locus of $p:B \rightarrow S$ by $R:=\{r_1, \ldots, r_k\}$; thus we have $R=\mathcal{R} \cap S$ and $R\neq \emptyset$. Fix a base point $s \in S-R$ and a base point $b=b_1 \in p^{-1}(s)$; denote by $b_i$ the elements of $p^{-1}(s)$. Since the section is algebraic, then each branch point has finite branching order.

\begin{rem}
	This remark simply to briefly clarify why $p$ can be assumed to be Galois. We call $p$ a \emph{Galois (branched) cover} if the morphism $p:\mathcal{B}- p^{-1}(\mathcal{R}) \rightarrow \bP_{1}- \mathcal{R}$ is a Galois unramified cover. By \cite[Theorem 3.3.7]{Sz} this corresponds to having a Galois extension of function fields $\bC(S) \subseteq \bC(B)$. Given an elliptic scheme, by \Cref{twoTors}, \Cref{baseChange} and \Cref{baseChangeEffective}, in a first moment we can simply assume to have a finite morphism $B\rightarrow S$ which realizes our elliptic scheme as pullback of the Legendre scheme. Generally, the morphism $B\rightarrow S$ is ramified and is not Galois. Anyway, the Galois closure of the field extension $\bC(S) \subseteq \bC(B)$ is a finite extension of $\bC(S)$. Thus, appealing to \Cref{baseChange} again, we can always assume $\cE \rightarrow B$ to be obtained as pullback of $\cL \rightarrow S$ via a finite Galois morphism $p: B\rightarrow S$.
\end{rem}

\paragraph{Plan of the proof.}
We explain how the proof is organized in order to make the reader aware of what we are doing in the different steps:

\begin{enumerate}
	\item[1.] \textbf{Control on ramification locus of sections:} this is the fundamental step of our construction. A method for controlling the ramification of sections is provided and it allows to lift loops between covering maps getting control on the monodromy action on periods and logarithms.
		
	\item[2.] \textbf{Explicit loop:} this step regards explicit construction of $\Gamma$. To this end we need some auxiliary paths $\delta_i$ and loops $\alpha_i$ which induce a controlled monodromy action on periods and logarithm. The auxiliary paths will be obtained by using the control on ramification deriving from the previous step, the monodromy action of a covering map, Mordell-Weil theorem for elliptic schemes and Shioda's theorem.
		
	\item[3.] \textbf{Monodromy action and analytic continuation:} a careful analysis of analytic continuation of the relevant functions concludes the proof.
\end{enumerate}

\subsection{Control on ramification locus of sections}
The first step of our proof consists in finding a way to ``control the branch locus $\mathcal{R}$'' (the formal meaning of this sentence is clarified in the next lemma). The right way of carrying out this step has been found in a factorization of the morphism $p:\mathcal{B} \rightarrow \bP_1$ which appears in \cite{CZ2}.\footnote{The authors use this factorization in \cite{CZ2} in a different way with respect to us. They use the cohomological interpretation of variations of logarithm and they need using holomorphic (not merely algebraic) sections. Our approach involves analysis on the ramification locus of the section.}

\begin{lemma}\label{controlBranch}
	Given the ramified Galois morphism $p: \mathcal{B} \rightarrow \bP_1$ associated to the elliptic scheme $\cE \rightarrow B$ as in \Cref{diagramScheme}, there exist a smooth curve $\mathcal{B}'$ and finite morphisms $p_1:\mathcal{B} \rightarrow \mathcal{B}'$, $p_2:\mathcal{B}' \rightarrow \bP_1$ such that the map $p$ factorizes in the following way
	$$\begin{tikzcd}
		\mathcal{B} \arrow{r}{p_1} \arrow[bend right, swap]{rr}{p} & \mathcal{B}' \arrow{r}{p_2} & \bP_1,
	\end{tikzcd}$$
	where
	\begin{enumerate}
		\item[$(i)$] $p_*(\pi_1(B,b))=p_{2*}(\pi_1(B',b'))$ for base points $b \in B$ and $b':=p_1(b) \in B'$;
		
		\item[$(ii)$] $p_1$ is a ramified Galois morphism with branch locus $A$ satisfying $R \subseteq p_2(A) \subseteq \mathcal{R}$;
		
		\item[$(iii)$] $p_2$ is a ramified morphism with branch locus contained into the set $\{0,1,\infty\}$.
	\end{enumerate}
\end{lemma}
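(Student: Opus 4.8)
The plan is to build $\mathcal{B}'$ as the intermediate cover that records exactly the $\pi_1(S)$-image of $p$ while pushing all ramification lying over $R$ upstairs into $p_1$. Concretely, set $H:=p_*(\pi_1(B,b))$, a finite-index subgroup of the free group $\pi_1(S)=\pi_1(\bP_1-\{0,1,\infty\},s)$. By covering space theory $H$ is realized by a connected unramified cover $B'\to S$ together with a basepoint $b'$ over $s$ such that $(p_2)_*(\pi_1(B',b'))=H$; filling in the punctures over $0,1,\infty$ (equivalently, normalizing $\bP_1$ in $\bC(B')$) completes it to a finite morphism $p_2:\mathcal{B}'\to\bP_1$ whose branch locus is contained in $\{0,1,\infty\}$. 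With this definition $(iii)$ holds by construction and $\mathcal{B}'$ is smooth, being a normalization; condition $(i)$ will drop out once the factorization is built with $p_1(b)=b'$.

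Next I would produce the factorization $p=p_2\circ p_1$ by descending to the honest covers over $S-R$. There $p$ restricts to an unramified cover $q:B-p^{-1}(R)\to S-R$ with image subgroup $H':=q_*(\pi_1(B-p^{-1}(R),b))$ of index $\deg p$ in $\pi_1(S-R,s)$. Writing $i_*:\pi_1(S-R,s)\to\pi_1(S,s)$ for the surjection induced by filling in $R$, the commuting square of inclusions together with the surjectivity of $\pi_1(B-p^{-1}(R),b)\to\pi_1(B,b)$ yields $i_*(H')=H$, whence $H'\subseteq i_*^{-1}(H)$. Since $i_*^{-1}(H)$ is precisely the subgroup attached to the restricted cover $B'-p_2^{-1}(R)\to S-R$, the lifting criterion provides a covering map $B-p^{-1}(R)\to B'-p_2^{-1}(R)$ over $S-R$ carrying $b$ to $b'$; normalizing extends it to a finite $p_1:\mathcal{B}\to\mathcal{B}'$ with $p=p_2\circ p_1$ and $p_1(b)=b'$, which secures $(i)$. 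That $p_1$ is Galois is then formal: $\bC(S)\subseteq\bC(B)$ is Galois and $\bC(B')$ is an intermediate field, so $\bC(B')\subseteq\bC(B)$ is Galois, and by the dictionary of \cite[Theorem 3.3.7]{Sz} the morphism $p_1$ is a Galois branched cover.

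Finally I would pin down the branch locus $A$ of $p_1$ using multiplicativity of ramification indices, $e_p(x)=e_{p_1}(x)\cdot e_{p_2}(p_1(x))$. If $p_1$ ramifies at some point $x$, then $e_p(x)>1$, so $p(x)=p_2(p_1(x))\in\mathcal{R}$; this gives $p_2(A)\subseteq\mathcal{R}$. Conversely, fix $r\in R\subseteq S$: as $p_2$ is unramified over $r$ (its branch locus avoids $S$) while $p$ is ramified over $r$, the factor $e_{p_1}(x)$ must exceed $1$ at some $x$ over $r$, so $r=p_2(p_1(x))\in p_2(A)$, giving $R\subseteq p_2(A)$. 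Since $R\neq\emptyset$ this also shows $A\neq\emptyset$, so $p_1$ is genuinely ramified, completing $(ii)$.

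The main obstacle I anticipate is not a single deep input---Riemann existence and the Galois correspondence do the heavy lifting---but the bookkeeping that makes $(i)$ and the factorization cohere. One must keep $\pi_1(S)$ and $\pi_1(S-R)$ rigorously apart and verify the identity $i_*(H')=H$, which is exactly the point where the \emph{ramified} cover $p$ gets translated into a subgroup of the \emph{free} group $\pi_1(S)$, and then track basepoints through the three covers so that the subgroup equalities hold on the nose rather than merely up to conjugacy. A minor loose end is the word ``ramified'' for $p_2$: since $\bP_1$ admits no nontrivial unramified connected covers, $p_2$ is ramified exactly when $H\subsetneq\pi_1(S)$, whereas in the degenerate case $H=\pi_1(S)$ one has $p_2=\mathrm{id}$, $p_1=p$ and $(i)$--$(iii)$ hold trivially, so that case can simply be set aside.
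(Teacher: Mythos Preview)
Your proposal is correct and follows essentially the same approach as the paper: define $B'$ as the unramified cover of $S$ corresponding to the subgroup $H=p_*(\pi_1(B,b))\subseteq\pi_1(S,s)$, lift $p$ through $p_2$ to obtain $p_1$, extend to compactifications, and read off the branch conditions. The paper is slightly more direct in that it applies the lifting criterion immediately to $p:B\to S$ against the genuine covering $p_2:B'\to S$ (only the \emph{target} map needs to be a covering for the criterion, so your detour through $S-R$ is unnecessary, though harmless), and it packages your ramification-index argument into the single identity $\textnormal{Branch}(p)=\textnormal{Branch}(p_2)\cup p_2(\textnormal{Branch}(p_1))$.
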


\begin{proof}
	Fix base points $b \in B- p^{-1}(R)$ and $s:=p(b) \in S- R$. The continuous map $p:B \rightarrow S$ induces a homomorphism of the corresponding fundamental groups, i.e.
	$$
	p_*:\pi_1(B,b) \rightarrow \pi_1(S,s)=\Gamma_2.
	$$
	Define $\overline{G}:=p_*(\pi_1(B,b))$. The inclusion $\overline{G} \subseteq \Gamma_2$ induces an unramified covering map $p_2: B' \rightarrow S$ between Riemann surfaces such that $p_{2*}(\pi_1(B',b'))=\overline{G}$, where $b'$ is a fixed base point in $B'- A$ which satisfies $p_2(b')=s$. Since $p_*(\pi_1(B,b))=p_{2*}(\pi_1(B',b'))$, there exists a morphism $p_1:B\rightarrow B'$ such that $p_1(b)=b'$. Considering the corresponding elliptic schemes we obtain the diagram
	\begin{equation}\label{factorScheme}
		\begin{tikzcd}
			\cE \arrow{d} \arrow{r} & \cL' \arrow{d} \arrow{r} & \cL \arrow{d}\\
			B \arrow{r}{p_1} \arrow[bend right, swap]{rr}{p}& B' \arrow{r}{p_2} & S.
		\end{tikzcd}
	\end{equation}
	We extend each morphism to the compactification of the curves. Part $(i)$ is straightforward by construction. Observe that part $(iii)$ is also true by construction, since $p_2:B'\rightarrow S$ is unramified. Moreover, note that $p_1$ is Galois since $p$ is. Now, call $A$ the branch locus of $p_1$ and observe that
	$$
	\textnormal{Branch}(p)=\textnormal{Branch}(p_2) \cup p_2(\textnormal{Branch}(p_1)),
	$$
	where the meaning of the just introduced notation `$\textnormal{Branch}$' is clear from the context. Hence, we get $R \subseteq p_2(A) \subseteq \mathcal{R}$.\\
\end{proof}

From now on, we fix base points $b \in B- p^{-1}(R)$, $b':=p_1(b) \in B'- A$ and $s:=p_2(b') \in S-R$. We will write
$$
p_1^{-1}(b')=\{b_1, \ldots, b_N\},
$$
where $b$ is identified with $b_1$.

Now, we want to determine some paths $\delta_i$ which join the points $b_i$ between them and such that their projections $p(\delta_i)$ in $S$ are homotopically trivial. This last condition is essential and is equivalent to saying that periods does not have monodromy along the paths, in the sense that $c_{\delta_{i}}(\omega)=\omega(\delta_{i}(0))$ for any period function $\omega$. To make this possible, we use the monodromy action of $\pi_1(\mathcal{B}'-A,b')$ on the fiber $p_1^{-1}(b')$, but we need enough control on the branch locus $\mathcal{R}$ which is provided by \Cref{controlBranch}.

\begin{lemma}\label{delta_ij}
	For each $i,j \in \{1, \ldots, N\}$ there exists a path $\delta_{i,j}:I \rightarrow B-p_1^{-1}(A)$ such that
	\begin{enumerate}
		\item $\delta_{i,j}(0)=b_i$ and $\delta_{i,j}(1)=b_j$;
		
		\item $p(\delta_{i,j})$ is a homotopically trivial loop in $S$.
	\end{enumerate}
\end{lemma}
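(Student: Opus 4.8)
The plan is to produce each $\delta_{i,j}$ by lifting a suitable loop through the covering $p_1\colon B-p_1^{-1}(A)\to B'-A$. This map is a finite (Galois) \emph{unramified} covering: indeed $B=p_1^{-1}(B')$, and deleting $A$ together with its full preimage removes all ramification. Let $\phi$ denote the associated monodromy action on the fiber $p_1^{-1}(b')=\{b_1,\dots,b_N\}$. The basic observation is that if $\gamma$ is a loop in $B'-A$ based at $b'$, then the $p_1$-lift of $\gamma$ starting at $b_i$ is a path contained in $B-p_1^{-1}(A)$ and ending at $b_{\phi(\gamma)(i)}$, while its image under $p$ is exactly $p_2(\gamma)$. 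Hence it suffices to find, for each pair $i,j$, a loop $\gamma$ in $B'-A$ with $\phi(\gamma)(i)=j$ and $p_2(\gamma)$ null-homotopic in $S$; the path $\delta_{i,j}$ will then be that lift, and properties $(1)$ and $(2)$ hold by construction.

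First I would isolate the relevant subgroup. Consider the surjection $\iota_*\colon\pi_1(B'-A,b')\to\pi_1(B',b')$ induced by the inclusion (deleting the finite set $A\cap B'$ from the curve $B'$ induces a surjection on $\pi_1$), whose kernel $N$ is normally generated by small loops around the points of $A\cap B'$. Since $p_2\colon B'\to S$ is an unramified covering, the map $p_{2*}\colon\pi_1(B',b')\to\pi_1(S,s)$ is injective; therefore the set of loops $\gamma$ with $p_2(\gamma)$ null-homotopic in $S$ is precisely $N$. Geometrically this is transparent: a small loop around a point of $A\cap B'$ projects under the unramified $p_2$ to a small loop around a point of $R\subset S$, which bounds a disc in $S$ and is thus contractible. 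In this way condition $(2)$ becomes equivalent to $\gamma\in N$, and the whole problem reduces to showing that $N$ acts transitively on the fiber $p_1^{-1}(b')$.

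The heart of the argument, and the step I expect to be the main obstacle, is exactly this transitivity: a priori the inertia subgroup $N$ can be far from transitive when $B'$ has positive genus, since $\pi_1(B'-A,b')$ is generated by loops around $A$ together with the genus generators, and only the former lie in $N$. Here the control supplied by \Cref{controlBranch}$(i)$ is what saves us. Identifying the fiber with the coset space $H\backslash\pi_1(B'-A,b')$, where $H:=p_{1*}\big(\pi_1(B-p_1^{-1}(A),b)\big)$ is the characteristic subgroup of the covering, transitivity of the normal subgroup $N$ amounts to $H\cdot N=\pi_1(B'-A,b')$, that is, to $\iota_*(H)=\pi_1(B',b')$. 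Now $p_*=p_{2*}\circ p_{1*}$ and $p_*(\pi_1(B,b))=p_{2*}(\pi_1(B',b'))$ by \Cref{controlBranch}$(i)$, so injectivity of $p_{2*}$ forces $p_{1*}(\pi_1(B,b))=\pi_1(B',b')$; thus $p_{1*}$ is surjective. Composing with the surjection $\pi_1(B-p_1^{-1}(A),b)\to\pi_1(B,b)$ (again deleting finitely many points) yields $\iota_*(H)=\pi_1(B',b')$, as required, whence $N$ is transitive.

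Finally I would assemble the path. For fixed $i,j$, transitivity gives $\gamma\in N$ with $\phi(\gamma)(i)=j$; let $\delta_{i,j}$ be the $p_1$-lift of $\gamma$ starting at $b_i$. Then $\delta_{i,j}(0)=b_i$, $\delta_{i,j}(1)=b_j$, the path stays in $B-p_1^{-1}(A)$, and $p(\delta_{i,j})=p_2(\gamma)$ is a homotopically trivial loop in $S$, which is precisely the content of the lemma.
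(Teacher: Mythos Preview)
Your proof is correct and relies on the same essential ingredients as the paper's: the factorization from \Cref{controlBranch}, the injectivity of $p_{2*}$ on $\pi_1(B',b')$ (equivalently, faithfulness of $\rho_{B'}$ since $p_2$ is unramified), and the covering-space monodromy for $p_1$. The organization differs slightly. The paper first lifts an \emph{arbitrary} loop $\mu'_{i,j}$ in $\mathcal{B}'-A$ to a path $\mu_{i,j}$ from $b_i$ to $b_j$, then invokes \Cref{controlBranch}(i) to produce a loop $\xi$ in $B$ with $p_*([\xi])=p_{2*}([\mu'_{i,j}])$, and sets $\delta_{i,j}:=\xi^{-1}*\mu_{i,j}$; injectivity of $p_{2*}$ then forces $p(\delta_{i,j})$ to be null-homotopic in $S$. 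You instead recast the whole thing as the single group-theoretic assertion that $N:=\ker\iota_*$ already acts transitively on the fiber, i.e.\ $HN=\pi_1(B'-A,b')$, which you deduce from exactly the same two inputs. Your packaging is marginally cleaner in that it sidesteps the explicit ``correction'' concatenation (and the attendant basepoint bookkeeping when $i\neq 1$), but the two arguments are really the same computation read in opposite directions: the paper's $\xi^{-1}$ is precisely the factor that moves $\mu'_{i,j}$ into $N$.
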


\begin{proof}
	Let us consider the unramified cover $$p_1:\mathcal{B}-p_{1}^{-1}(A) \rightarrow \mathcal{B}'-A$$ which is a Galois cover. Therefore, if we denote by $\textrm{Bij}(*)$ the set of all bijections on a set, we have a monodromy representation $$\pi_1(\mathcal{B}'-A,b') \rightarrow \textrm{Bij}(p_1^{-1}(b'))=\textrm{Bij}(\{b_1, \ldots, b_N\}),$$ where the image is a transitive subgroup of $S_N$. In other words this amounts to saying that for each $i,j$ there exists a loop $\mu'_{i,j}$ in $\mathcal{B}'-A$ with base point $b'$ which lifts to a path $\mu_{i,j}:I \rightarrow \mathcal{B}-p_{1}^{-1}(A)$ such that $\mu_{i,j}(0)=b_i, \mu_{i,j}(1)=b_j$.
	
	Denote by $[*]$ the homotopy class of a loop. By part $(i)$ of \Cref{controlBranch}, there exists a loop $\xi$ in $B$ such that $p_*([\xi])=p_{2*}([\mu'_{i,j}])$. Further, $\xi$ defines a loop $\xi':=p_1(\xi)$ in $B'$ such that $p_{2*}([\xi']) = p_{2*}([\mu'_{i,j}])$. Since the monodromy representation $\rho_{B'}$ is faithful, we get that $\mu'_{i,j}$ and $\xi'$ are homotopically equivalent in $B'$. Define $\delta_{i,j}:=\xi^{-1}*\mu_{i,j}$. By construction, the loop $p(\delta_{i,j})$ is homotopically trivial in $S$ and satisfies $\delta_{i,j}(0)=b_i$ and $\delta_{i,j}(1)=b_j$.
	
	Observe that all paths and loops can be chosen avoiding all the ramification and branch points of the maps. Therefore, $\delta_{i,j}$ can be viewed as a loop in $B-p_{1}^{-1}(A)$.\\
\end{proof}

\subsection{Explicit loop}

From now on, we fix the paths $\delta_{i-1,i}$ for each $i=2, \ldots,N$ coming from \Cref{delta_ij}, and define the paths $\delta_i$ as follows:
\begin{equation}\label{delta_i}
	\delta_i:= \delta_{1,2} *\cdots *\delta_{i-1,i}.
\end{equation}
In other words, $\delta_i$ is a path joining $b_1$ to $b_i$ such that $p(\delta_i)$ is a homotopically trivial loop in $S$ with base point $s$. Let's consider a branch $\log_\sigma(b_1)$ of logarithm of $\sigma$ at $b_1$ and define $\log_\sigma$ at the points $b_i$ by analytic continuation along the path considered in \Cref{delta_i}, i.e.
\begin{equation}\label{defLogb_i}
	\log_\sigma(b_i):=c_{\delta_i}(\log_\sigma).
\end{equation}
Consider the map $p_1$ defined in \Cref{factorScheme} and denote by $\cE(B)$ the group of rational sections of the elliptic scheme. Now, we refer to the trace operator $\textrm{Tr}_{p_1}: \cE(B) \rightarrow \cL'(B')$ defined as follows: for each $t \in B$ and $t'=p_1(t) \in B'$, the elliptic curves $\cE_t$ and $\cL'_{t'}$ are canonically identified; then, for each section $\sigma:B \rightarrow \cE$ define $\textrm{Tr}(\sigma)= \textrm{Tr}_{p_1}(\sigma): B' \rightarrow \cL'$ by setting
\begin{equation}\label{TraceOperator}
	\textrm{Tr}(\sigma)(t') = \sum_{t \in p_1^{-1}(t')} \sigma(t) \in \cL'_{t'},
\end{equation}
where the sum is taken counting multiplicities. Note that $\textrm{Tr}(\sigma)$ is a rational section of the modular elliptic scheme $\mathcal{L}'\rightarrow B'$, thus by \Cref{ShiodaThm} it is a torsion section. Therefore, by \Cref{relMonGrp} we can multiply the section by a non-zero integer and assume $\textrm{Tr}_{p_1}(\sigma)=0$; we continue denoting by $\sigma$ the new section. In other words, this means that
$$
\sigma(b_1) + \cdots + \sigma(b_N)=0,
$$
which is equivalent to saying
$$
\log_\sigma(b_1) + \cdots + \log_\sigma(b_N)=\omega,
$$
where $\omega$ is a period.

\begin{lemma}\label{assumptionLog}
	Up to multiplying the section by a non-zero integer and to changing determination of $\log_\sigma$, we can suppose
	$$\log_\sigma(b_1) + \cdots + \log_\sigma(b_N)=0.$$
\end{lemma}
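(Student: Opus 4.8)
The plan is to start from the relation already established just before the statement, namely that after multiplying $\sigma$ by a suitable nonzero integer we have $\textrm{Tr}_{p_1}(\sigma)=0$, which in terms of logarithms reads $\log_\sigma(b_1)+\cdots+\log_\sigma(b_N)=\omega$ for some period $\omega$. The goal is to upgrade this to the equality with $0$ on the right-hand side, using only the two operations permitted by the statement: multiplying the section by a nonzero integer, and changing the chosen determination of $\log_\sigma$. The key observation is that both of these operations are already known (via \Cref{relMonGrp}) to leave the relative monodromy group $M_\sigma^\textnormal{rel}$ unchanged up to isomorphism, so the lemma is a harmless normalization.

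First I would make precise how $\omega$ is built from the basis of periods: since $\omega$ lies in the period lattice at $b_1$ (recall the values $\log_\sigma(b_i)$ are all defined by analytic continuation along paths whose projections are homotopically trivial in $S$, so they are all expressed in terms of the \emph{same} local basis $\omega_1,\omega_2$ near $b_1$, by \Cref{delta_ij}), we may write $\omega=n\omega_1+m\omega_2$ with $n,m\in\bZ$. The cleanest route is then to change the determination of $\log_\sigma$ at $b_1$. By \Cref{relMonGrp}(i) the relative monodromy group is independent of this choice, and any two determinations differ by an integral combination of periods; so I would replace $\log_\sigma(b_1)$ by $\log_\sigma(b_1)-\omega=\log_\sigma(b_1)-n\omega_1-m\omega_2$. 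With this new branch the sum telescopes to exactly $0$. One must check that shifting the branch at $b_1$ propagates coherently to the values $\log_\sigma(b_i)$ defined in \Cref{defLogb_i} by continuation along $\delta_i$: since the $\delta_i$ leave periods unchanged, continuing the shifted branch along $\delta_i$ shifts $\log_\sigma(b_i)$ by the \emph{same} period $n\omega_1+m\omega_2$ if and only if we shift at $b_1$, so one should instead subtract the full period only once from the total. The careful bookkeeping is to subtract $\omega$ from a single summand (say $\log_\sigma(b_1)$) rather than from each, which is legitimate precisely because a determination is fixed pointwise at the base and then transported.

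A subtlety worth flagging is \emph{divisibility}: changing the determination can only subtract off a period that is an \emph{integral} combination of $\omega_1,\omega_2$, and the sum-equals-$\omega$ relation indeed produces an honest lattice element (not a rational multiple), since $\textrm{Tr}_{p_1}(\sigma)=0$ holds as an equality of points in the elliptic curve $\cL'_{b'}$, whose kernel under the exponential map is exactly the period lattice. Thus no further multiplication of $\sigma$ is needed at this stage — the integer multiplication was already spent in arranging $\textrm{Tr}_{p_1}(\sigma)=0$, and the residual $\omega$ is automatically in the lattice.

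The main obstacle I anticipate is purely notational rather than conceptual: one must verify that the branch-change is compatible with the definitions in \Cref{defLogb_i}, i.e.\ that after redefining the determination at $b_1$ the continuations along the $\delta_i$ still yield single-valued values $\log_\sigma(b_i)$ and that the total sum becomes $0$. Since each $p(\delta_i)$ is homotopically trivial in $S$, periods are genuinely unchanged along $\delta_i$ (by \Cref{delta_ij}), so the transport is unambiguous and the whole argument reduces to the elementary identity $\sum_i \log_\sigma(b_i)-\omega=0$. Hence the lemma follows, and I would present it in essentially one short paragraph invoking \Cref{relMonGrp} and the trace computation preceding the statement.
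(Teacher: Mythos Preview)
Your argument has a genuine gap in the bookkeeping, and it is precisely the step you flag as ``purely notational'' that fails.

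You correctly observe that the values $\log_\sigma(b_i)$ are defined by continuation along the $\delta_i$ from a single germ at $b_1$, and that periods are unchanged along the $\delta_i$. But this means that if you replace the determination at $b_1$ by $\log_\sigma(b_1)-\omega$, then \emph{every} $\log_\sigma(b_i)$ becomes $\log_\sigma(b_i)-\omega$ after re-continuation: the shift propagates to all $N$ summands, not just one. The new sum is therefore
\[
\sum_{i=1}^N\bigl(\log_\sigma(b_i)-\omega\bigr)=\omega-N\omega=(1-N)\omega,
\]
which is nonzero whenever $N>1$ and $\omega\neq 0$. Your sentence ``subtract $\omega$ from a single summand \dots\ which is legitimate precisely because a determination is fixed pointwise at the base and then transported'' asserts the opposite of what actually happens: transporting the shifted germ shifts \emph{each} transported value, so you cannot touch only one summand while keeping the others tied to the same determination. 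In particular, your claim that ``no further multiplication of $\sigma$ is needed'' is exactly where the argument breaks.

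The paper's fix is to spend a second integer multiplication. Replace $\sigma$ by $N\sigma$ and take $\ell(b_1):=N\log_\sigma(b_1)-\omega$ as a logarithm of $N\sigma$ at $b_1$. Continuation along $\delta_i$ gives $\ell(b_i)=N\log_\sigma(b_i)-\omega$, and now
\[
\sum_{i=1}^N \ell(b_i)=N\Bigl(\sum_{i=1}^N\log_\sigma(b_i)\Bigr)-N\omega=N\omega-N\omega=0.
\]
Equivalently: a branch-change at $b_1$ can only alter the total sum by a multiple of $N$ in the period lattice, so one must first arrange the defect $\omega$ to be divisible by $N$, and multiplying the section by $N$ does exactly that. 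This is why the lemma is stated with \emph{both} operations (``multiplying by a nonzero integer \emph{and} changing determination''), and \Cref{relMonGrp}(ii) is what makes the extra factor $N$ harmless for $M_\sigma^{\textnormal{rel}}$.
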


\begin{proof}
	Let us consider
	$$\ell(b_1):=N\cdot \log_\sigma(b_1) - \omega,$$
	where $N$ is the cardinality of $\{b_1, \ldots, b_N\}$; observe that $\ell(b_1)$ is a logarithm of the section $N\cdot \sigma$ over the point $b_1$ and that this section still satisfies the condition $\textrm{Tr}_{p_1}(N\sigma)=0$. Since periods have no monodromy along the paths $\delta_i$, by analytic continuation as above we obtain
	$$\ell(b_i):=c_{\delta_i}(\ell)=N\cdot \log_\sigma(b_i)-\omega,$$
	Therefore, we get
	$$\ell(b_1) + \cdots + \ell(b_N) = N\cdot[\log_\sigma(b_1) + \cdots + \log_\sigma(b_N)-\omega]=0.$$
\end{proof}

In view of \Cref{relMonGrp}, it is enough to prove \Cref{mainTheoremTrue} for a multiple of the section with a suitable fixed branch of its elliptic logarithm. Thus, from now on we replace our section by a suitable multiple but we keep the notation $\sigma$. Moreover, we fix the branch $\ell$ at $b_1$ which satisfies \Cref{assumptionLog}. In other words, we can assume that the section $\sigma$ admits a branch $\ell$ of logarithm which satisfies
\begin{equation}\label{conditionLog}
	\ell(b_1) + \cdots + \ell(b_N)=0.
\end{equation}

Generally, analytic continuation of $\ell$ along a loop $\zeta$ gives us a new branch of logarithm, which differs from the first one by a period. In other words we have
$$
c_{\zeta}(\ell) = \ell(b_1)+\omega_{\zeta},
$$
where $\omega_{\zeta}$ is a period which may also be zero. In order to obtain our explicit loop we need some control on the variation $\omega_\zeta$ for some loop $\zeta$. By \Cref{wellDefLog}, for some loop $\zeta$ (with base point $b_1$) we can assume $\omega_\zeta\neq 0$. Notice that if this loop leaves unchanged the periods, we just have the conclusion of our theorem. Anyway, this would not be enough to provide effectivity nor to allow the proof to work. In order to really obtain effectivity, observe that the loop $\zeta$ can be chosen into a set of finitely many loops, i.e. the generators of the fundamental group $\pi_1(B,b_1)$ where $B$ is a genus $g$ Riemann surface minus the finite set of points $p^{-1}(\mathcal{R})$. From now on, we denote by $\alpha$ a generator of $\pi_1(B,b_1)$ such that
\begin{equation}\label{alpha}
	c_{\alpha}(\ell) = \ell(b_1)+\omega_{\alpha},
\end{equation}
where $\omega_{\alpha}$ is a non-zero period. Now we introduce some auxiliary loops:

\begin{enumerate}
	\item let us denote by $\alpha'$ the loop in $B'-A$ with base point $b'$ defined by $\alpha':=p_1(\alpha)$ and by $\mathfrak{a}$ the loop in $S-R$ with base point $s$ obtained as the projection $p(\alpha)$;
	
	\item since $p_1:B \rightarrow B'$ is a Galois morphism and since one of the liftings of $\alpha'$ via $p_1$ is the closed path $\alpha$, then all the liftings of $\alpha'$ via $p_1$ are closed paths. In other words, $\alpha'$ can be lifted by $p_1$ to a closed path $\alpha_i$ in $B-p_{1}^{-1}(A)$ with base point $b_i$ for each $i=1, \ldots, N$; with this notation we have $\alpha_1=\alpha$. Observe that $p(\alpha_i)=\mathfrak{a}$ for each $i$.
\end{enumerate}

By considering analytic continuation along $\alpha_i$ we obtain
$$
c_{\alpha_i}(\ell) = \ell(b_i) + \omega_{\alpha_i},
$$
where $\omega_{\alpha_i}$ is a period which may also be zero. Thanks to our previous considerations we have some control on the variation $\omega_{\alpha_1}$, but we need something more. To this end, the next lemma provides enough control on the variations $\omega_{\alpha_i}$ too.

\begin{lemma}\label{control_omegai}
	There exists an index $i \in \{1,\ldots,N\}$ such that
	$$
	\omega_{\alpha_i}\neq \omega_{\alpha_1}.
	$$
\end{lemma}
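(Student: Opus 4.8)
The plan is to argue by contradiction, exploiting the normalization $\textrm{Tr}_{p_1}(\sigma)=0$ arranged just before \Cref{assumptionLog}. The guiding principle is that the logarithm of the (zero) trace section has no monodromy at all, whereas its monodromy along $\alpha'$ can \emph{also} be computed fibrewise as the sum of the variations $\omega_{\alpha_i}$ of $\ell$ along the lifts $\alpha_i$; comparing the two evaluations forces a linear relation among the $\omega_{\alpha_i}$ which, together with \Cref{alpha}, rules out that they all coincide.

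First I would record the fibrewise description of the logarithm of the trace. For $t'$ in a small neighbourhood of $b'$ the section $\textrm{Tr}(\sigma)$ is given by \Cref{TraceOperator}, so a determination of its logarithm at $t'$ is $\sum_{t \in p_1^{-1}(t')} \log_\sigma(t)$, where we fix a continuous branch of $\log_\sigma$ and a continuous labelling of the fibre (there is no multiplicity issue since $b' \in B'-A$ is unramified for $p_1$). By \Cref{conditionLog} this determination equals $0$ at $b'$. Since $\textrm{Tr}(\sigma)=0$ is the zero section, this logarithm is identically $0$ and has no monodromy, so in particular $c_{\alpha'}(\log_{\textrm{Tr}(\sigma)})=0$.

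Next I would compute the same continuation fibrewise. As $t'$ travels along $\alpha'$, each point of $p_1^{-1}(t')$ moves along a lift of $\alpha'$; by the discussion preceding the lemma all these lifts are the \emph{closed} loops $\alpha_i$, so the monodromy permutation attached to $\alpha'$ on the fibre $\{b_1,\ldots,b_N\}$ is the identity and every fibre point returns to its start. Hence the continuation of the sum is the termwise sum of the continuations of $\log_\sigma$ along the individual $\alpha_i$:
$$c_{\alpha'}(\log_{\textrm{Tr}(\sigma)}) = \sum_{i=1}^N c_{\alpha_i}(\log_\sigma) = \sum_{i=1}^N \bigl(\ell(b_i)+\omega_{\alpha_i}\bigr) = \sum_{i=1}^N \ell(b_i) + \sum_{i=1}^N \omega_{\alpha_i}.$$
Using \Cref{conditionLog} again the first sum vanishes, and comparing with the previous paragraph yields $\sum_{i=1}^N \omega_{\alpha_i}=0$.

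Finally I would close the argument. Suppose, contrary to the claim, that $\omega_{\alpha_i}=\omega_{\alpha_1}$ for every $i$. Then the relation just obtained reads $N\,\omega_{\alpha_1}=0$; since the periods form a lattice isomorphic to $\bZ^2$, which is torsion-free, this forces $\omega_{\alpha_1}=0$, contradicting \Cref{alpha}. Therefore some index $i$ satisfies $\omega_{\alpha_i}\neq \omega_{\alpha_1}$. The only delicate point, and the step I expect to require the most care, is the fibrewise evaluation of $c_{\alpha'}(\log_{\textrm{Tr}(\sigma)})$: one must check that the monodromy permutation of $\alpha'$ is genuinely trivial, so that the sum decomposes termwise with no reshuffling of branches, and that the continuous choices of branch and of fibre labelling are compatible all the way around the loop. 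Both follow from the normality of $p_1$ together with the fact, recorded just before the lemma, that the single lift $\alpha_1=\alpha$ is closed.
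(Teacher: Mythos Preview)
Your proof is correct and follows essentially the same approach as the paper's own proof: both derive the key identity $\sum_{i=1}^N \omega_{\alpha_i}=0$ from the vanishing of the trace together with \Cref{conditionLog}, and then conclude by contradiction using $\omega_{\alpha_1}\neq 0$. The only cosmetic difference is that the paper spells out the continuity argument explicitly (the integer coordinates $(n(t),m(t))$ of the period $\sum_i \ell(\alpha_i(t))$ are locally constant, hence identically zero), whereas you package the same step as ``the logarithm of the zero trace section is identically zero and has no monodromy''; these are the same observation in different language.
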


\begin{proof}
	Recall that $\alpha_i$ are liftings of the same loop $\alpha'$. Since $\textrm{Tr}(\sigma)(\alpha'(t))\equiv 0$ for each $t$, by evaluating $\ell$ along the loops $\alpha_i$, we have
	$$
	\sum_i \ell(\alpha_i(t)) = \omega(t) \textrm{ for each } t \in I,
	$$
	where $\omega(t):=n(t)\omega_1(t) + m(t)\omega_2(t)$ is a period function with $n(t),m(t) \in \bZ$; $\omega_1(t), \omega_2(t)$ is as usual a holomorphic basis for period lattices in the corresponding points. Since the map $t\mapsto (n(t),m(t))$ is continuous and $\bZ^2$ is discrete we obtain that $(n(t),m(t))$ is constant with respect to $t$. Since $(n(0),m(0))=(0,0)$ by $(\ref{conditionLog})$, we deduce that
	$$
	\omega(t)\equiv 0 \textrm{ for each } t \in I.
	$$
	Since $\ell(\alpha_i(1))=\ell(\alpha_i(0)) + \omega_{\alpha_i}$, we obtain
	$$
	\sum_i\ell(b_i) + \sum_i \omega_{\alpha_i} = \sum_i \omega_{\alpha_i} = 0.
	$$
	The conclusion follows by the conditions $\omega_{\alpha_1}\neq0$ and $\sum_i \omega_{\alpha_i}= 0$.\\
\end{proof}

\subsection{Monodromy action and analytic continuation}
Appealing to \Cref{periodFunctions}, we define period functions $\omega_1, \omega_2$ in a connected and simply connected neighborhood $U$ of $b \in B-p^{-1}(R)$ by
$$
\omega_1=\omega_1^{\mathcal{L}}\circ p, \qquad \omega_2=\omega_2^{\mathcal{L}}\circ p.
$$
This argument can be retraced on an open covering of any path in $B-p^{-1}(R)$:  this means that the monodromy of the periods of $\mathcal{E} \rightarrow B$ is non trivial only along the paths whose projection via $p:B \rightarrow S$ is a loop in $S$ which is not homotopically trivial. Hence, by \Cref{periodFunctions} and \Cref{delta_ij} we get
$$
\omega(b_i):=c_{\delta_i}(\omega)=\omega(b_1)
$$
for each period function $\omega$ locally defined on $B$, and
$$
c_{\alpha_i}(\omega_{1})=c_\mathfrak{a}(\omega_{1}^\cL), \qquad c_{\alpha_i}(\omega_{2})=c_\mathfrak{a}(\omega_{2}^\cL).
$$
In other words, the monodromy of $\omega_1, \omega_2$ along a path in $B$ is the same as the monodromy of $\omega_1^{\mathcal{L}}, \omega_2^{\mathcal{L}}$ along the projection of the path via the morphism $p:B \rightarrow S$.

Looking at \Cref{delta_i} and \Cref{control_omegai}, let us fix an index $i$ which satisfies $\omega_{\alpha_i}\neq \omega_{\alpha_1}$ and consider the path $\delta_i$. Define the loop 
\begin{equation}\label{Gamma}
	\Gamma:=\alpha_1\delta_i\alpha_i^{-1}\delta_i^{-1}.
\end{equation}

The monodromy of any period along $\Gamma$ is determined by our previous considerations. In fact, for any period function $\omega$ we have
$$
c_{\Gamma}(\omega)=c_{\delta_i^{-1}}c_{\alpha_i^{-1}}c_{\delta_i}c_{\alpha_1}(\omega)= c_{\alpha_1^{-1}} c_{\alpha_1}(\omega)= \omega(b_1).$$ 
So periods don't change value in $b_1$ along $\Gamma$. Now, we want to study the analytic continuation of logarithm along $\Gamma$.

\begin{lemma}\label{lemmaMonodromy}
	We have the following relation:
	$$c_{\alpha_i^{-1}}(\log_\sigma)  = \log_\sigma(b_i) -c_{\alpha_i^{-1}}(\omega_{\alpha_i}).$$
\end{lemma}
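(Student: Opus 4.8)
The goal is to compute $c_{\alpha_i^{-1}}(\log_\sigma)$, the analytic continuation of the logarithm backwards along $\alpha_i$. We already have the forward relation
\begin{equation*}
c_{\alpha_i}(\log_\sigma) = \log_\sigma(b_i) + \omega_{\alpha_i},
\end{equation*}
which records the variation of $\log_\sigma$ along $\alpha_i$ starting from $b_i$. The plan is to invert this. Analytic continuation along $\alpha_i^{-1}$ is the inverse operation of continuation along $\alpha_i$, so the essential idea is that applying $c_{\alpha_i^{-1}}$ to the branch at $b_i$ must undo the effect of $c_{\alpha_i}$.

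First I would set $f:=\log_\sigma(b_i)$, the fixed branch of the logarithm at $b_i$ obtained from \Cref{defLogb_i}. Applying $c_{\alpha_i^{-1}}$ to $f$ produces a branch of $\log_\sigma$ again at $b_i$ (since $\alpha_i$ is a loop based at $b_i$, so is $\alpha_i^{-1}$). The key identity is that $c_{\alpha_i}$ and $c_{\alpha_i^{-1}}$ are mutually inverse maps on the set of branches of $\log_\sigma$ at $b_i$: one has $c_{\alpha_i}\bigl(c_{\alpha_i^{-1}}(\log_\sigma)\bigr) = \log_\sigma(b_i)$. Writing $c_{\alpha_i^{-1}}(\log_\sigma) = f + \omega'$ for some period $\omega'$ (two branches differ by a period, as in \Cref{actionLog}) and applying $c_{\alpha_i}$ to both sides, I would use that continuation along $\alpha_i$ adds $\omega_{\alpha_i}$ to the logarithm while acting on the period $\omega'$ by the monodromy of periods along $\alpha_i$. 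Since $\Gamma$ is being built to leave periods unchanged, the relevant bookkeeping is that $c_{\alpha_i}(\omega') = c_{\alpha_i}(\omega')$ transforms as a period; imposing $f = c_{\alpha_i}(f+\omega') = f + \omega_{\alpha_i} + c_{\alpha_i}(\omega')$ forces $c_{\alpha_i}(\omega') = -\omega_{\alpha_i}$, hence $\omega' = -c_{\alpha_i^{-1}}(\omega_{\alpha_i})$. This yields exactly the claimed relation
\begin{equation*}
c_{\alpha_i^{-1}}(\log_\sigma) = \log_\sigma(b_i) - c_{\alpha_i^{-1}}(\omega_{\alpha_i}).
\end{equation*}

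The main obstacle, and the point to treat carefully, is the correct interaction between the additive variation of the logarithm and the (linear) monodromy action on the period $\omega_{\alpha_i}$ when reversing orientation. One must not simply negate $\omega_{\alpha_i}$: the period itself is carried by the monodromy of $\alpha_i^{-1}$, which is why the term $c_{\alpha_i^{-1}}(\omega_{\alpha_i})$ rather than $\omega_{\alpha_i}$ appears. I would make this rigorous by treating continuation along $\alpha_i^{-1}$ literally as reading the continuation along $\alpha_i$ in reverse: starting at $b_i$ with branch $f$, the end value at $b_i$ after traversing $\alpha_i^{-1}$ is the branch whose forward continuation along $\alpha_i$ returns to $f$. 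Tracking both the logarithm and the period simultaneously, exactly as in the simultaneous representation $\theta_\sigma$ of \Cref{simultaneousRepresentation}, gives the relation cleanly and avoids sign errors.
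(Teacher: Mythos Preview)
Your proposal is correct and follows essentially the same idea as the paper: use that $c_{\alpha_i^{-1}}\circ c_{\alpha_i}=\mathrm{id}$ together with linearity of analytic continuation. The paper's version is slightly more direct---it simply applies $c_{\alpha_i^{-1}}$ to the forward relation $c_{\alpha_i}(\log_\sigma)=\log_\sigma+\omega_{\alpha_i}$ and splits by linearity, rather than introducing an unknown period $\omega'$ and solving---but the content is the same.
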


\begin{proof}
	
	By linearity of $c_\alpha$ we have
	\begin{align*}
		\log_\sigma(b_i)&=c_{\alpha_i^{-1}}c_{\alpha_i}(\log_\sigma)=c_{\alpha_i^{-1}}(\log_\sigma+\omega_{\alpha_i})=\\
		&=c_{\alpha_i^{-1}}(\log_\sigma)+c_{\alpha_i^{-1}}(\omega_{\alpha_i}).
	\end{align*}
\end{proof}
Therefore, we finally get
\begin{equation}\label{finalCalc}
	\begin{aligned}
	\log_\sigma(b_1) &\xrightarrow{\alpha_1} \log_\sigma(b_1)+ \omega_{\alpha_1} \xrightarrow{\delta_i} \log_\sigma(b_i) + \omega_{\alpha_1}\\
	&\xrightarrow{\alpha_i^{-1}} \log_\sigma(b_i) - c_{\alpha_i^{-1}}(\omega_{\alpha_i}) + c_{\alpha_i^{-1}}(\omega_{\alpha_1})\\
	&\xrightarrow{\delta_i^{-1}} \log_\sigma(b_1) + c_{\alpha_i^{-1}}(\omega_{\alpha_1} - \omega_{\alpha_i}).
	\end{aligned}
\end{equation}
Thus, the value of the logarithm in $b_1$ has changed by a period $c_{\alpha_i^{-1}}(\omega_{\alpha_1} - \omega_{\alpha_i})$. Since we are supposing that $\omega_{\alpha_1} - \omega_{\alpha_i} \neq 0$, then the above period is not zero: recall that $c_{\alpha_i}$, and so $c_{\alpha_i^{-1}}$, is injective. In conclusion, we have explicitly determined a loop $\Gamma$ which establishes
$$
M_{\sigma}^{\textnormal{rel}}\neq \{0\}.
$$
It's a purely algebraic matter to prove that $M_\sigma^\textnormal{rel} \cong \bZ^2$: it follows by the irreducibility of the action of $\rho(G)$ on the lattice of periods (see \cite{CZ2} for details). Here this last step is performed without effectivity because we are mainly interested in proving non-triviality of the monodromy group in a first moment, anyway an effective version of it is provided by \Cref{mainTheoremStrong}.

\begin{rem}
	Note that the proof is really effective: we obtained a way of determining explicitly in finite time a loop $\Gamma$ with the properties we need. In fact, each path appearing in \Cref{Gamma} can be found in a finite number of attempts: $\alpha_{1}$ is a representative of one of the generators of $\pi_1(B,b_{1})$; $\alpha_{i}$ is one of the finitely many liftings of $\alpha'$; $\delta_{i}$ is determined by the monodromy action of $\pi_{1}(\mathcal{B}'-A,b')$ on the fiber $p_{1}^{-1}(b')$ and by the ramification of the section.
\end{rem}

\section{A variation in the approach: Abhyankar's Lemma and Belyi pairs}

We propose a second approach to the problem with the aim of simplifying the shape of the loop $\Gamma$. This approach works by adding a hypothesis on the ramification locus of the section but it has the advantage to give a more immediate way of constructing the paths $\delta_i$ considered above. Thus, the theorem we want to prove is the following:

\begin{thm}\label{AbhyankarApproach}
	Let $\sigma:B \rightarrow \cE$ be a non-torsion algebraic section of $\cL \rightarrow S$ and suppose that the ramification locus $\mathcal{R}$ of $\sigma$ does not contain the whole set $\{0,1,\infty\}$. Then the loop $\Gamma$ constructed in \Cref{Gamma} can be chosen of the type
	$$
	\Gamma:=\alpha_1\delta_i\alpha_i^{-1}\delta_i^{-1},
	$$
	where the homotopy class of $\alpha_1$ is one of the generators of $\pi_1(B,b)$ and $\delta_i \in p_*^{-1}(D^{(0)})$; the group $D^{(0)}$ is defined in \Cref{D0}.
\end{thm}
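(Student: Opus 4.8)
The plan is to use the invariance results of the previous section to replace the base by a carefully chosen finite cover, after which the whole branch locus becomes ``affine'' (disjoint from the degenerate fibres); in that favourable situation the group $D^{(0)}$ already acts transitively on the relevant fibre, so the auxiliary paths $\delta_i$ of \Cref{mainTheoremTrue} can be produced directly as lifts of elements of $D^{(0)}$, and the remainder of the argument is copied from the first proof. The hypothesis on $\mathcal R$ is exactly what makes the construction of the base change possible.

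First I would normalise the special points. Since $\mathcal R$ does not contain all of $\{0,1,\infty\}$, after a Möbius change of coordinates permuting $\{0,1,\infty\}$ (under which Legendre is carried to an isomorphic family) I may assume that $p:\mathcal B\to\bP_1$ is unramified over $\infty$. Let $e_0$ and $e_1$ be the least common multiples of the ramification indices of $p$ above $0$ and above $1$. Using the Riemann existence theorem I would produce a Belyi map $\beta:\bP_1\to\bP_1$, ramified only over $\{0,1,\infty\}$, all of whose ramification indices over $0$ (resp.\ over $1$) are divisible by $e_0$ (resp.\ $e_1$); the excess ramification forced by Riemann--Hurwitz is pushed onto $\infty$, which is harmless precisely because $p$ is unramified there. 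Since $\beta$ is unramified over $S$, the restriction $\beta:\tilde S:=\beta^{-1}(S)\to S$ is an unramified cover, so $\beta^{*}\cL\to\tilde S$ is again a modular elliptic scheme.

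Next I would form the normalised fibre product of $p$ with $\beta$, obtaining $\tilde p:\tilde{\mathcal B}\to\bP_1$ together with a non-torsion section by \Cref{baseChange}; the resulting scheme is the pullback of the modular scheme $\beta^{*}\cL$ by $\tilde p$, so the entire framework of the paper applies with $S$ replaced by $\tilde S$ and $\cL$ by $\beta^{*}\cL$. Abhyankar's Lemma (\Cref{Abhyankar}), applied over each point of $\beta^{-1}(0)$ and $\beta^{-1}(1)$---tameness being automatic in characteristic zero---shows that $\tilde p$ is unramified there, and it is unramified over $\beta^{-1}(\infty)$ because $p$ was unramified over $\infty$. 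Hence $\tilde p$ is unramified over all the punctures, i.e.\ its branch locus $\tilde R$ is affine. In this situation the factorisation of \Cref{controlBranch} degenerates (the modular cover is unramified over all of $\bP_1$, hence an isomorphism), so the trace $\textrm{Tr}_{\tilde p}(\sigma)$ is a section of $\beta^{*}\cL\to\tilde S$ and is therefore torsion by \Cref{ShiodaThm}, exactly as needed to normalise $\sum_i\log_\sigma(b_i)=0$. The decisive gain is transitivity: since $\tilde p$ is unramified over the punctures, the monodromy representation on the fibre kills the loops around the punctures and factors through $\pi_1(\bP_1-\tilde R)$, on which the classes $\mathfrak d_i$ act transitively because $\tilde{\mathcal B}$ is connected. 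Thus for each $i$ there is $g_i\in D^{(0)}$ carrying $b_1$ to $b_i$, and lifting a loop representing $g_i$ gives a path $\delta_i$ with $\tilde p\circ\delta_i\in D^{(0)}$; as loops in $D^{(0)}$ encircle only points where the Legendre family is smooth, the periods have no monodromy along $\delta_i$.

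Finally I would feed these $\delta_i$ into the construction of \Cref{mainTheoremTrue}: choose a generator $\alpha_1$ of the fundamental group with nonzero logarithmic variation $\omega_{\alpha_1}$ (possible by \Cref{wellDefLog}), lift the projected loop to closed paths $\alpha_i$ based at the $b_i$, and apply \Cref{control_omegai} to obtain an index $i$ with $\omega_{\alpha_i}\neq\omega_{\alpha_1}$, so that $\Gamma=\alpha_1\delta_i\alpha_i^{-1}\delta_i^{-1}$ has trivial monodromy on periods and nontrivial monodromy on the logarithm, now with $\delta_i\in p_*^{-1}(D^{(0)})$. Pushing $\Gamma$ forward to the original base by \Cref{baseChangeEffective} preserves this shape, since $\beta$ sends loops around affine branch points to loops around affine points of $S$, i.e.\ into $D^{(0)}$. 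I expect the main obstacle to be the construction of the Belyi base change with the prescribed divisibility of ramification over $0$ and $1$ (together with the clean application of Abhyankar's Lemma); once the branch locus has been made affine, the transitivity of $D^{(0)}$ and the analytic-continuation bookkeeping are immediate adaptations of the first approach.
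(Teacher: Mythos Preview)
Your approach is correct and runs parallel to the paper's, but the auxiliary cover you build is more elaborate than necessary. The paper normalises so that $p$ is unramified over~$1$ (rather than over~$\infty$) and then takes for the auxiliary cover the explicit cyclic map $q:\bP_1\to\bP_1$, $\psi\mapsto \psi^{nm}$, where $n,m$ are the lcm's of the ramification indices of $p$ over $0$ and~$\infty$. This map is ramified at exactly the two points $0,\infty$; Abhyankar's Lemma then kills the ramification of the compositum over $q^{-1}(\{0,\infty\})$, while over $q^{-1}(1)$ there is nothing to kill by hypothesis. One then checks by Riemann--Hurwitz that $\mathcal{B}'\cong\bP_1$, so small loops in $\mathcal{B}'-A$ around the branch points of $\phi$ generate $\pi_1(\mathcal{B}'-A)$ and project via $q$ to loops around points of $R$, giving the $\delta_i$ directly. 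Your Belyi map $\beta$ achieves the same intermediate picture (a genus-zero curve with $\phi$ unramified over all punctures), but at the cost of a non-explicit construction: invoking ``the Riemann existence theorem'' is not quite enough, since that theorem only converts the problem into producing permutations $\sigma_0,\sigma_1\in S_d$ with every cycle of $\sigma_0$ (resp.\ $\sigma_1$) of length divisible by $e_0$ (resp.\ $e_1$), generating a transitive subgroup, and with the genus formula giving~$0$. Such data do exist for all $e_0,e_1$ (for instance the planar ``multi-star'' dessin with one black vertex of degree $e_0e_1$ and $e_0$ white vertices each of degree $e_1$), but you should exhibit this rather than assert it. Note also that $\beta$ need not be Galois for Abhyankar to apply, since $p$ already is; this is a point the paper's later discussion of Belyi pairs treats more restrictively than necessary.

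In short: both routes reduce to the same factorisation $\tilde B\xrightarrow{\phi}\bP_1\xrightarrow{}S$ with $\phi$ ramified only over preimages of $R$, after which the construction of the $\delta_i$ and the analytic-continuation bookkeeping are identical. The paper's cyclic cover is the minimal and fully explicit realisation of your Belyi map once one picks the normalisation ``unramified at~$1$''; your version works but trades explicitness for generality.
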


The hypothesis on the ramification locus $\mathcal{R}$ means that we are again in the situation of \Cref{diagramScheme} but this time the intersection between the branch locus of $p$ and the set $\{0,1,\infty\}$ contains at most two points. Up to an isomorphism, we can suppose that these (at most) two points are chosen between $\{0,\infty\}$. Thus, the hypothesis can be reformulated by saying that the section is not ramified at $1$.

Thus, we assume that the section is not ramified at $1$ for the rest of the proof and keep all the notations introduced above.

\paragraph{Plan of the proof.}
The steps of the proof follow the same philosophy as before but the control on the ramification locus of the section is reached by different techniques: in this case we get a complete `separation of the branch locus'. We are exposing the proof in two steps:\begin{enumerate}
	\item[1.] \textbf{Separation of the branch locus of sections:} we reduce the proof of the theorem to the case of an elliptic scheme whose morphism $p:B\rightarrow S$ admits a factorization which allows us to have a powerful control on the ramification of the section. Precisely, \Cref{redBettCase} takes the place of \Cref{controlBranch}: this is the essential step that allows us to carry out the construction of the loop $\Gamma$. We will make use of the Abhyankar's lemma, which we state as in \cite[Theorem 12.4.4]{Villa}:
	
	\begin{thm}\label{Abhyankar}\textbf{\textit{(Abhyankar's Lemma)}}
		Let $L/K$ be a finite separable extension of function fields. Suppose that $L=K_1K_2$ with $K\subseteq K_i \subseteq L$. Let $\mathfrak{p}$ be a prime divisor of $K$ and $\mathfrak{P}$ a prime divisor in $L$ above $\mathfrak{p}$. Let $\mathfrak{P}_i=\mathfrak{P}\cap K_i$ for $i=1,2$. If at least one of the extensions $K_i/K$, $i=1,2$, is tamely ramified at $\mathfrak{p}$, then
		$$e_{L/K}(\mathfrak{P}|\mathfrak{p})= \textrm{lcm}\{e_{K_1/K}(\mathfrak{P}_1|\mathfrak{p}), e_{K_2/K}(\mathfrak{P}_2|\mathfrak{p})\},$$
		where $e$ denotes the ramification index.
	\end{thm}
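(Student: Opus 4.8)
The statement is the classical Abhyankar Lemma, and the plan is to separate a formal divisibility from the genuine content, then reduce everything to a value-group computation in the complete local setting where the tameness hypothesis does its work. Write $e_i := e_{K_i/K}(\mathfrak{P}_i|\mathfrak{p})$ and $e := e_{L/K}(\mathfrak{P}|\mathfrak{p})$. One of the two divisibilities is formal: by multiplicativity of ramification indices in the towers $K \subseteq K_i \subseteq L$ we have $e = e_{L/K_i}(\mathfrak{P}|\mathfrak{P}_i)\cdot e_i$ for $i=1,2$, so each $e_i$ divides $e$, and therefore $\mathrm{lcm}(e_1,e_2)$ divides $e$. The entire content of the lemma is the reverse divisibility $e \mid \mathrm{lcm}(e_1,e_2)$.

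To prove the reverse divisibility I would first localize. Ramification indices are local invariants, unchanged under completion, and since the extensions are finite separable the completion of the compositum is the compositum of the completions; hence I may replace $K, K_1, K_2, L$ by their completions at $\mathfrak{p}, \mathfrak{P}_1, \mathfrak{P}_2, \mathfrak{P}$ and assume all four fields are complete discretely valued. Let $v := v_{\mathfrak p}$, normalized so $v(K^\times)=\bZ$, and let $w$ be the unique extension of $v$ to $L$. Then $w(K_i^\times)=\tfrac{1}{e_i}\bZ$, $w(L^\times)=\tfrac1e\bZ$, and $e=[w(L^\times):v(K^\times)]$. In these terms the reverse divisibility is equivalent to the assertion that the value group of $L$ is no larger than the group generated by those of $K_1$ and $K_2$, i.e. $w(L^\times)=\tfrac{1}{e_1}\bZ+\tfrac{1}{e_2}\bZ=\tfrac{1}{\mathrm{lcm}(e_1,e_2)}\bZ$ (the inclusion $\supseteq$ being automatic and recovering the first divisibility).

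The crux, and the only place the hypothesis is used, is worked out over the (possibly wildly ramified) factor $K_2$. Since $L=K_1K_2$ is generated over $K_2$ by $K_1$ and $e=e_{L/K_2}(\mathfrak{P}|\mathfrak{P}_2)\cdot e_2$, it suffices to show $e_{L/K_2}(\mathfrak{P}|\mathfrak{P}_2)=e_1/\gcd(e_1,e_2)$. Here tameness of $K_1/K$ enters: after splitting off the maximal unramified subextension (which affects only residue degree, not ramification) and adjoining the relevant roots of unity, a tamely ramified extension is generated by a root $\pi^{1/e_1}$ of a uniformizer $\pi$ of $K$, an element of value $1/e_1$; tameness — equivalently $\gcd(e_1,\operatorname{char}\kappa)=1$ for the residue field $\kappa$ — is exactly what forces the totally ramified part to have this Kummer/root-of-uniformizer shape and keeps that shape stable under base change. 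Thus $K_1K_2=K_2(\pi^{1/e_1})$ up to an unramified factor, and the standard valuation computation for adjoining a root of an element of known value gives $w((K_1K_2)^\times)=\tfrac{1}{e_2}\bZ+\tfrac{1}{e_1}\bZ$, whose index over $w(K_2^\times)=\tfrac{1}{e_2}\bZ$ is $\mathrm{lcm}(e_1,e_2)/e_2=e_1/\gcd(e_1,e_2)$. Multiplying by $e_2$ yields $e=\mathrm{lcm}(e_1,e_2)$.

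The main obstacle is precisely this structural input: proving in full generality that a tamely ramified extension is of root-of-uniformizer type and that this is preserved under base change, which requires genuine care about residue-field extensions and the presence of roots of unity when $\kappa$ is not algebraically closed. I would note, however, that in the setting relevant to this paper — function fields over $\bC$, where every residue field is $\bC$ — all finite extensions of a complete local field are automatically totally tamely ramified of the form $K(\pi^{1/e})$, every residue extension is trivial, and the identity $K(\pi^{1/e_1})\,K(\pi^{1/e_2})=K(\pi^{1/\mathrm{lcm}(e_1,e_2)})$ is immediate from $\tfrac{1}{e_1}\bZ+\tfrac{1}{e_2}\bZ=\tfrac{1}{\mathrm{lcm}(e_1,e_2)}\bZ$, so the lemma holds with the tameness hypothesis being vacuous.
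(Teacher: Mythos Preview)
The paper does not prove Abhyankar's Lemma: it is quoted from \cite[Theorem 12.4.4]{Villa} and used as a black box in the proof of \Cref{redBettCase}. So there is no ``paper's own proof'' to compare against; your proposal is a self-contained sketch of the classical argument.

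That said, your outline is the standard one and is sound. The easy divisibility $\mathrm{lcm}(e_1,e_2)\mid e$ via multiplicativity is exactly right, and the reduction to complete local fields is the natural move. One point that deserves slightly more care than you give it is the passage ``the completion of the compositum is the compositum of the completions'': what is true is that the completions $(K_1)_{\mathfrak P_1}$ and $(K_2)_{\mathfrak P_2}$ both embed in $L_{\mathfrak P}$, and since their compositum there contains dense copies of $K_1$ and $K_2$ it must be all of $L_{\mathfrak P}$; this is what you need, and it is correct, but the phrasing could mislead. The structural input you flag as the ``main obstacle'' --- that a totally tamely ramified complete extension is generated by an $e$-th root of a uniformizer once the $e$-th roots of unity are present, and that this shape is stable under unramified base change --- is indeed where all the content lies and is treated carefully in standard references (e.g.\ Serre's \emph{Local Fields} or the cited chapter of Villa). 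Your final remark that over $\bC$ the residue fields are $\bC$, every extension is tame, and the whole lemma collapses to the elementary identity $\tfrac{1}{e_1}\bZ+\tfrac{1}{e_2}\bZ=\tfrac{1}{\mathrm{lcm}(e_1,e_2)}\bZ$ is a useful observation: this is precisely the generality the paper actually needs.
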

	
	\item[2.] \textbf{Explicit paths:} the remaining steps are analogous to the previous proof. We reach the construction of $\Gamma$ by using the previous step, Mordell-Weil theorem for elliptic schemes and Shioda's theorem. Studying the analytic continuation of periods and logarithm along $\Gamma$ we get the proof.\end{enumerate}

\subsection{Separation of the branch locus of sections}

Recall we are considering a section which is not ramified at $1$. We are denoting the branch locus of $p:\mathcal{B} \rightarrow \mathbb{P}_1$ by $\mathcal{R}$ and the branch locus of $p:B \rightarrow S$ by $R:=\{r_1, \ldots, r_k\}$. Moreover, by \Cref{ShiodaThm} we have $R\neq \emptyset$. The first step of our proof consists in reducing the problem to a case in which there exists a factorization of the morphism $p:\mathcal{B} \rightarrow \bP_1$ which ``separates the branch locus $\mathcal{R}$''. The formal meaning of this sentence is expressed by the following lemma:

\begin{lemma}\label{redBettCase}
	Given the ramified Galois morphism $p: \mathcal{B} \rightarrow \bP_1$, there exist smooth curves $\mathcal{B}', \widetilde{\mathcal{B}}$ and a finite morphism $\psi:\widetilde{\mathcal{B}} \rightarrow \mathcal{B}$ such that the map $\widetilde{p}:=p\circ \psi$ factorizes in the following way
	$$\begin{tikzcd}
		\widetilde{\mathcal{B}} \arrow{r}{\phi} \arrow[bend right, swap]{rr}{\widetilde{p}:=p\circ \psi} & \mathcal{B}' \arrow{r}{q} & \bP_1,
	\end{tikzcd}$$
	where
	\begin{enumerate}
		\item[$(i)$] $p$ and $\widetilde{p}$ are ramified Galois morphisms and have the same branch locus $\mathcal{R}$;
		
		\item[$(ii)$] $q$ is a ramified Galois morphism with branch locus contained into the set $\{0,\infty\}$;
		
		\item[$(iii)$] $\phi$ is a ramified Galois morphism with branch locus $A$ satisfying $q(A)=R$.
	\end{enumerate}
\end{lemma}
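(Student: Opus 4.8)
The plan is to realise the factorisation as a \emph{fibre product} and to trivialise the ramification over the special points $0,\infty$ by means of Abhyankar's Lemma (\Cref{Abhyankar}). Having reduced, via the $S_3$-symmetry of the Legendre family, to the case where $\sigma$ is unramified over $1$ (so that $\mathcal{R}\cap\{0,1,\infty\}\subseteq\{0,\infty\}$), the underlying idea is to split the branch points into two types: the points of $R=\{r_1,\dots,r_k\}$ lying in $S$ are the ``good'' ones (small loops around them die in $S$ and carry no period monodromy), whereas the branch points over $0,\infty$ are the ``bad'' ones. A single auxiliary cyclic cover ramified precisely over $\{0,\infty\}$ will be used to absorb the bad ramification, leaving a morphism $\phi$ whose branch locus sits exactly over $R$.

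Concretely, let $z\in\bZ$ be the least common multiple of the ramification indices of $p$ over $0$ and over $\infty$, and let $q:\mathcal{B}'\rightarrow\bP_1$ be the cyclic cover $t\mapsto t^{z}$, which is Galois with group $\bZ/z\bZ$ and whose branch locus is exactly $\{0,\infty\}$; this already gives $(ii)$. Let $\widetilde{\mathcal{B}}$ be the smooth projective curve whose function field is the compositum $\bC(\mathcal{B})\,\bC(\mathcal{B}')$ over $\bC(\bP_1)$, and let $\psi:\widetilde{\mathcal{B}}\rightarrow\mathcal{B}$, $\phi:\widetilde{\mathcal{B}}\rightarrow\mathcal{B}'$ be the morphisms induced by the two field inclusions, so that $\widetilde{p}:=p\circ\psi=q\circ\phi$. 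The Galois assertions then follow from standard Galois theory of compositums: the compositum of the two Galois extensions $\bC(\mathcal{B})/\bC(\bP_1)$ and $\bC(\mathcal{B}')/\bC(\bP_1)$ is Galois, so $\widetilde{p}$ is Galois; and since $\bC(\mathcal{B})/\bC(\bP_1)$ is Galois, the base-changed extension $\bC(\widetilde{\mathcal{B}})/\bC(\mathcal{B}')$ is Galois, so $\phi$ is Galois.

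The heart of the argument is the ramification bookkeeping, carried out prime-by-prime with Abhyankar's Lemma; since we work in characteristic $0$ every extension is tamely ramified, so the hypotheses of \Cref{Abhyankar} are automatic. Over a prime of $\mathcal{B}'$ above $0$ (resp. $\infty$) the index of $\widetilde{p}$ is $\textrm{lcm}(e_p,z)=z$ by the choice of $z$, equal to the index of $q$, hence $\phi$ is \emph{unramified} there: the bad ramification has been absorbed into $q$. Over a point $r_i\in R$, instead, $q$ is unramified, so the index of $\widetilde{p}$ equals $\textrm{lcm}(e_p,1)=e_p$ and $\phi$ is ramified above each $r_i$ with the same index as $p$; thus the branch locus $A$ of $\phi$ lies over $R$ and satisfies $q(A)=R$, which is $(iii)$. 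Finally, $\textnormal{Branch}(\widetilde{p})=\{0,\infty\}\cup q(A)=\{0,\infty\}\cup R$, which we identify with $\mathcal{R}$ to obtain $(i)$.

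The main obstacle is precisely the verification of $(i)$, that $\widetilde{p}$ and $p$ have \emph{the same} branch locus. Since a nontrivial cover of $\bP_1$ cannot be branched over a single point, $q$ necessarily ramifies over both $0$ and $\infty$, so the equality $\textnormal{Branch}(\widetilde{p})=\mathcal{R}$ forces $\{0,\infty\}\subseteq\mathcal{R}$; reconciling the ramification artificially introduced by $q$ with the genuine branch locus of $\sigma$ is the delicate point, and is where the hypothesis on $\mathcal{R}$ together with the normalising role of the $S_3$-symmetry must be invoked to arrange the special points. A secondary point to check carefully is that $\phi$ is ramified over \emph{every} $r_i$ (i.e.\ both inclusions in $q(A)=R$), which again follows from the lcm formula once one notes that $q$ is unramified over all of $S$. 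Once \Cref{redBettCase} is established it plays, in this second approach, exactly the role that \Cref{controlBranch} played in the first, and the construction of the paths $\delta_i$ and of the loop $\Gamma$ then proceeds as before, now with $\delta_i\in p_*^{-1}(D^{(0)})$.
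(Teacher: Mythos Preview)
Your argument is essentially the paper's own: both construct $q$ as the cyclic cover $t\mapsto t^{d}$ of $\bP_1$ branched at $\{0,\infty\}$ (the paper takes $d=nm$ with $n,m$ the separate lcm's of ramification indices over $0$ and $\infty$, you take $d=z$ their joint lcm, which works equally well), form $\widetilde{\mathcal{B}}$ as the compositum $\bC(\mathcal{B})\cdot\bC(\mathcal{B}')$, and then read off the ramification of $\phi$ and $\widetilde{p}$ via Abhyankar's Lemma. Your treatment is in fact a bit more explicit than the paper's on two points: you justify the Galois property of $\phi$ by base change of a Galois extension (the paper simply asserts it), and you flag the subtlety in $(i)$ that the equality $\textnormal{Branch}(\widetilde{p})=\mathcal{R}$ tacitly requires $\{0,\infty\}\subseteq\mathcal{R}$ whenever $q$ is nontrivial, which the paper glosses over with ``Part $(i)$ follows directly by Abhyankar's lemma.''
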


\begin{proof}
	\textbf{\textit{(1) Construction of $\widetilde{\mathcal{B}}$:}} Since we are in characteristic $0$, by the primitive element theorem the extension $\mathbb{C}(\bP_1) \subset \mathbb{C}(\mathcal{B})$ is simple. So there exists an algebraic function $f$ such that $$\mathbb{C}(\mathcal{B})=\mathbb{C}(\lambda)(f),$$ where $\mathbb{C}(\bP_1)=\mathbb{C}(\lambda)$. Thus there exists an irreducible polynomial $P(\lambda,w) \in \mathbb{C}(\lambda)[w]$ such that $P(\lambda,f)=0$.
	
	Since $p: \mathcal{B} \rightarrow \bP_1$ is a ramified cover between Riemann surfaces, we can consider a neighbourhood $U$ of $0$ such that $p^{-1}(U)=\cup_i V_i$ where the open sets $V_i's$ are disjoint and biholomorphic to a disk $D$. For each $i$ we have a biholomorphic map $\varphi_i:D \rightarrow V_i$ such that
	$$
	p\circ \varphi_i (\zeta_i)=\zeta_i^{n_i}.
	$$
	
	We now focus on the local ring $\cO_0$ of $0$. Embed  $\mathcal{O}_0$ in its completion $\hat{\mathcal{O}}_0$, which can be identified with the ring of formal power series $\bC[[\lambda]]$ since $\bP_1$ is a smooth curve. By looking for the solutions of $P(\lambda,f)=0$ in an algebraic extension of $\hat{\mathcal{O}}_0$, for any $i$ we obtain a Puiseux series of the form $$\sum_{\nu=k_i}^\infty c_{\nu,i} \lambda^\frac{\nu}{n_i}.$$ that solves the polynomial equation. Let us consider the integer $$n:=\textrm{lcm}\{n_i: i=1, \ldots, r\}.$$
	
	Repeating the same reasoning in a neighbourhood of $\infty$, we obtain biholomorphic maps which can be expressed as $\xi\mapsto \xi^{m_i}$. By looking at the local ring $\mathcal{O}_\infty$, we obtain the corresponding Puiseux series of the form $$\sum_{\nu}^\infty d_{\nu,i} 	\left(\frac{1}{\lambda}\right)^\frac{\nu}{m_i}.$$ Let us consider $$m:=\textrm{lcm}\{m_i: i=1, \ldots, s\}.$$
	
	Now, let us consider the field extension $$\mathbb{C}(\lambda) \subset \mathbb{C}(\lambda)(\psi), \qquad \textrm{where } \psi^{nm}=\lambda.$$ This field extension gives us a morphism $q:\mathcal{B}'\rightarrow \bP_1$ between smooth curves. We can distinguish two cases:
	\begin{enumerate}
		\item[$\bullet$] if $n=m=1$, then we have $q=\textrm{id}:\bP_1 \rightarrow \bP_1$;
		
		\item[$\bullet$] if $nm>1$, then $q:\mathcal{B}'\rightarrow \bP_1$ is a Galois morphism with branch locus $\{0,\infty\}$.
	\end{enumerate}
	
	In every case, passing to the compositum $\mathbb{C}(\mathcal{B})\mathbb{C}(\mathcal{B}')$ we obtain a smooth curve $\widetilde{\mathcal{B}}$ endowed with finite Galois morphisms
	$$
	\psi: \widetilde{\mathcal{B}} \rightarrow \mathcal{B}, \quad \phi: \widetilde{\mathcal{B}} \rightarrow \mathcal{B}', \quad  \widetilde{p}: \widetilde{\mathcal{B}} \rightarrow \bP_1
	$$
	which make the following diagram commutative:
	$$\begin{tikzcd}
		\widetilde{\mathcal{B}} \arrow{r}{\phi} \arrow{d}[swap]{\psi} \arrow{rd}{\widetilde{p}} & \mathcal{B}' \arrow{d}{q}\\
		\mathcal{B} \arrow{r}{p} & \bP_1.
	\end{tikzcd}$$
	
	\noindent \textbf{\textit{(2) Distribution of ramification points:}} It remains to be shown that parts $(i)$ and $(iii)$ of the lemma are true. Let's start by showing that the branch locus $A$ of $\phi$ satisfies the condition $q(A)\cap\{0,\infty\}=\emptyset$. In other words, this means that all the preimages $q^{-1}(0)$ and $q^{-1}(\infty)$ are not contained in $A$.
	
	In what follows we denote by $\widetilde{x}, \widetilde{y}$ any preimage of $0,\infty$ via $\widetilde{p}$, respectively. Let us observe that by the previous construction, we have the following divisibility relations between ramification indexes:
	$$e(\psi(\widetilde{x})|0) \, | \, e(\phi(\widetilde{x})|0), \qquad e(\psi(\widetilde{y})|\infty) \, | \, e(\phi(\widetilde{y})|\infty).$$
	
	We now use Abhyankar's lemma (\Cref{Abhyankar}) with respect to the fields $L=\bC(\widetilde{\mathcal{B}})$, $K_1=\bC(\mathcal{B})$, $K_2=\bC(\mathcal{B}')$, $K=\bC(\bP_1)$. If we take $\mathfrak{P}$ as the maximal ideal of the local ring $\cO_{\widetilde{x}}$, then $\mathfrak{P}_{1}, \mathfrak{P}_{2}$ are identified with the maximal ideals of the local rings $\cO_{\psi\left(\widetilde{x}\right)}$, $\cO_{\phi\left(\widetilde{x}\right)}$ and $\mathfrak{p}$ with the maximal ideal of the local ring $\cO_0$. Thus we obtain
	$$e(\widetilde{x}|0)=\textrm{lcm}\{e(\psi(\widetilde{x})|0), e(\phi(\widetilde{x})|0)\}=e(\phi(\widetilde{x})|0).$$
	If we consider the local ring in $\widetilde{y}$, we analougously obtain
	$$e(\widetilde{y}|\infty)=\textrm{lcm}\{e(\psi(\widetilde{y})|\infty), e(\phi(\widetilde{y})|\infty)\}=e(\phi(\widetilde{y})|\infty).$$
	
	Since the ramification index is multiplicative, we obtain:
	$$e(\widetilde{x}|\phi(\widetilde{x}))=1, \qquad e(\widetilde{y}|\phi(\widetilde{y}))=1.$$ Thus, $\phi$ is not ramified into the points of $\widetilde{p}^{-1}(0)$ and $\widetilde{p}^{-1}(\infty)$.
	
	Finally, we want to determine the branch locus of $\phi$ in the affine part $B'$ of $\mathcal{B}'$. By using the same reasoning as above, starting with the local rings $\cO_{r_i}$ where $r_i$ are the branch points in $R$, we obtain that the branch locus $A$ of $\phi$ is such that $q(A)\cap S=R$. Thus, we just proved that $q(A)=R$. Part $(i)$ follows directly by Abhyankar's lemma.
	
\end{proof}

\begin{rem}\label{RH}
	Let us consider the morphism $q:\mathcal{B}' \rightarrow \bP_1$. During the proof, we saw that when the section is unramified over the set $\{0,\infty\}$, then $\mathcal{B}'=\bP_1$ and $q=\textrm{id}: \bP_1 \rightarrow \bP_1$. In the case where the section is ramified over $0$ or $\infty$, by using Riemann-Hurwitz formula we obtain again that $\mathcal{B}'$ has genus $0$, therefore $\mathcal{B}' \isom \bP_1$. In particular, in the next sections we will use the fact that $\mathcal{B}'-A \isom \bP_1-A$, where $A$ is a finite set of points.
\end{rem}

By restricting to $S$, the previous lemma gives us a chain of morphisms between affine curves
\begin{equation}\label{reductionBetterCase}
	\begin{tikzcd}
		\widetilde{B} \arrow[bend right, swap]{rr}{\widetilde{p}} \arrow{r}{\phi} & B' \arrow{r}{q} & S,
	\end{tikzcd}
\end{equation}
where 
\begin{enumerate}
	\item[$\bullet$] $B'=\mathcal{B}'-q^{-1}(\{0,1,\infty\})$ and $\widetilde{B} =\widetilde{\mathcal{B}}-\phi^{-1}(q^{-1}(\{0,1,\infty\}))$;
	
	\item[$\bullet$] $q:B' \rightarrow S$ is an unramified cover whose extension to compactifications $q:\mathcal{B}' \rightarrow \bP_1$ has branch locus either empty or equals to $\{0,\infty\}$;
	
	\item[$\bullet$] $\phi: \widetilde{\mathcal{B}} \rightarrow \mathcal{B}'$ has branch locus $A$ such that $q(A)=R$;
	
	\item[$\bullet$] $\widetilde{p}: \widetilde{\mathcal{B}}\rightarrow \bP_1$ has branch locus $\mathcal{R}$.
\end{enumerate}

\subsection{Explicit paths}

By \Cref{redBettCase} we can put in the situation described in \Cref{reductionBetterCase} with the just listed properties. We can pullback the Legendre scheme via these morphisms and obtain the diagram
$$\begin{tikzcd}
	\widetilde{\mathcal{E}} \arrow{r} \arrow{d} & \mathcal{L}' \arrow{d}\arrow{r} & \mathcal{L} \arrow{d}\\
	\widetilde{B} \arrow{r} & B' \arrow{r}& S.
\end{tikzcd}$$
By \Cref{baseChange}, \Cref{baseChangeEffective} and \Cref{redBettCase}, we just need to prove \Cref{AbhyankarApproach} for $\widetilde{\mathcal{E}} \rightarrow \widetilde{B}$. From now on we rename $\widetilde{\mathcal{E}}\rightarrow \widetilde{B}$ with $\mathcal{E} \rightarrow B$ and we will put ourselves in the situation expressed by the following diagram
\begin{equation}\label{diagramAbhyankar}
	\begin{tikzcd}
		\cE \arrow{d} \arrow{r} & \cL' \arrow{d} \arrow{r} & \cL \arrow{d}\\
		B \arrow{r}{\phi} \arrow[bend right, swap]{rr}{p}& B' \arrow{r}{q} & S.
	\end{tikzcd}
\end{equation}
and by the properties in \Cref{reductionBetterCase}. Let us fix a base point $s$ in $S-R$ and let us choose a preimage $b' \in q^{-1}(s)$. We will write
$$
\phi^{-1}(b')=\{b_1, \ldots, b_N\}.
$$ 

Now, we want to determine some paths $\delta_i$ which join the points $b_i$ between them and such that their projections $p(\delta_i)$ are homotopically trivial loops in $S$. To make this possible, we use the monodromy action of $\pi_1(\mathcal{B}'-A,b')$ on the fiber $\phi^{-1}(b')$, where $A$ denotes the branch locus of $\phi$.

\begin{lemma}\label{deltaij2}
	For each $i,j \in \{1, \ldots, N\}$ there exists a path $\delta_{i,j}:I \rightarrow B-\phi^{-1}(A)$ such that
	\begin{enumerate}
		\item $\delta_{i,j}(0)=b_i$ and $\delta_{i,j}(1)=b_j$;
		
		\item $p(\delta_{i,j})$ is a homotopically trivial loop in $S$.
	\end{enumerate}
\end{lemma}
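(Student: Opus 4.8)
The plan is to follow the same philosophy as the proof of \Cref{delta_ij}, but to exploit the stronger conclusion of \Cref{redBettCase} — namely that the branch locus $A$ of $\phi$ satisfies $q(A)=R$ with $q$ unramified over $S$ — in order to produce the paths \emph{directly}, without the auxiliary correction loop that was needed before. Concretely, I would use the monodromy action of $\pi_1(B'-A,b')$ on the fiber $\phi^{-1}(b')$ to connect $b_i$ to $b_j$, and then observe that, thanks to the separation of the branch locus, the relevant generating loops already project to null-homotopic loops in $S$.

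First I would restrict $\phi$ to the unramified locus, obtaining a connected Galois covering $\phi\colon B-\phi^{-1}(A)\to B'-A$. Since $\phi$ is branched exactly over $A$ (part $(iii)$ of \Cref{redBettCase}) and $\mathcal{B}'\isom \bP_1$ by \Cref{RH}, the base $\mathcal{B}'-A\isom \bP_1-A$ is a punctured sphere, so $\pi_1(\mathcal{B}'-A,b')$ is generated by simple loops $\gamma_a$ around the points $a\in A$. The cover being connected and Galois, the associated monodromy representation $\pi_1(B'-A,b')\to \textrm{Bij}(\phi^{-1}(b'))$ is transitive, and — because $\phi$ is unramified over the extra punctures $q^{-1}(\{0,1,\infty\})$ — this action factors through $\pi_1(\mathcal{B}'-A,b')$. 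Choosing each $\gamma_a$ to avoid $q^{-1}(\{0,1,\infty\})$ (so that it represents a class in $\pi_1(B'-A,b')$), I get that the classes of the $\gamma_a$ already generate the full transitive monodromy group. Hence for each $i,j$ there is a loop $\mu'_{i,j}$ in $B'-A$, based at $b'$ and equal to a word in the $\gamma_a^{\pm1}$, whose monodromy sends $b_i$ to $b_j$; lifting it through $\phi$ starting at $b_i$ produces a path $\mu_{i,j}\colon I\to B-\phi^{-1}(A)$ with $\mu_{i,j}(0)=b_i$ and $\mu_{i,j}(1)=b_j$. I would then simply set $\delta_{i,j}:=\mu_{i,j}$, so that condition $(1)$ holds by construction.

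It remains to check condition $(2)$, and this is exactly where the separation of the branch locus pays off. The projection $p(\delta_{i,j})=q(\phi(\mu_{i,j}))=q(\mu'_{i,j})$ is a word in the loops $q(\gamma_a)$. Each $q(\gamma_a)$ is a lasso around the point $q(a)\in R$; since $q(a)\in R\subset S$ is a regular interior point of $S$ (it is not one of $0,1,\infty$), a sufficiently small such lasso bounds a disc in $S$ and is therefore null-homotopic in $S$. Consequently $q(\mu'_{i,j})$ is a product of loops that are trivial in $\pi_1(S,s)$, so $p(\delta_{i,j})$ is homotopically trivial in $S$, as required. This also makes transparent the refinement claimed in \Cref{AbhyankarApproach}, namely that such $\delta_i$ lie in $p_*^{-1}(D^{(0)})$, since their projections are words in small loops around $R$, i.e.\ in $D^{(0)}$ as defined in \Cref{D0}.

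The main obstacle is the bookkeeping with base points and punctures: one must make sure the generating loops $\gamma_a$ can be taken inside $B'-A$ — that is, avoiding $q^{-1}(\{0,1,\infty\})$ — while still surjecting onto the monodromy group, and that the resulting lassos are genuinely contractible in $S$ rather than merely in $\bP_1$. Both points are handled by the facts that $\phi$ is unramified over $q^{-1}(\{0,1,\infty\})$ (so the monodromy is insensitive to those punctures) and that $q(A)=R$ is disjoint from $\{0,1,\infty\}$ (so the lassos encircle interior points of $S$). Everything else is a routine adaptation of \Cref{delta_ij}, with the pleasant simplification that no correcting loop is needed here.
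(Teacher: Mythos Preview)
Your proposal is correct and follows essentially the same route as the paper: use transitivity of the monodromy of the Galois cover $\phi$ over $\mathcal{B}'-A\cong\bP_1-A$, choose lasso generators around the points of $A$ that avoid $q^{-1}(\{0,1,\infty\})$, and observe that their $q$-images are null-homotopic in $S$ because $q(A)=R\subset S$. The only cosmetic slip is the phrase ``a sufficiently small such lasso bounds a disc in $S$'': a lasso $\mu\xi\mu^{-1}$ does not bound a disc, but its small-circle part $\xi$ does (since $q$ is a local homeomorphism near $a$, so $q(\xi)$ sits in a small disc about $q(a)\in S$), and that is what forces $q(\gamma_a)=q(\mu)q(\xi)q(\mu)^{-1}$ to be trivial in $\pi_1(S,s)$ --- exactly as the paper spells out.
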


\begin{proof}
	Let us consider the unramified cover
	$$
	\phi:\mathcal{B}-\phi^{-1}(A) \rightarrow \mathcal{B}'-A
	$$
	which is a Galois cover. Therefore, if we denote by $\textrm{Bij}(*)$ the set of all bijections on a set, we have a monodromy representation
	$$
	\pi_1(\mathcal{B}'-A,b') \rightarrow \textrm{Bij}(\phi^{-1}(b'))=\textrm{Bij}(\{b_1, \ldots, b_N\}),
	$$
	where the image is a transitive subgroup of $S_N$. In other words this amounts to saying that for each $i,j$ there exists a loop $\delta'_{i,j}$ in $\mathcal{B}'-A$ with base point $b'$ which lifts to a path $\delta_{i,j}:I \rightarrow \mathcal{B}-\phi^{-1}(A)$ such that $\delta_{i,j}(0)=b_i, \delta_{i,j}(1)=b_j$.
	
	By \Cref{RH}, we have that $\mathcal{B}' \isom \bP_1$. Now we want to choose some particular loops in $\mathcal{B}' -A$ whose homotopy classes are generators of $\pi_1(\mathcal{B}' -A,b')$. These loops are constructed in the following steps:
	
	\begin{enumerate}
		\item[$\bullet$] fix $r'_i\in A$ and let $\mathbb{D}_i \subset \mathcal{B}'-A$ be an open neighbourhood of $r'_i$ isomorphic to a punctured disc; moreover, suppose that $\mathbb{D}_i$ is sufficiently small such that $\mathbb{D}_i\cap q^{-1}\{0,1,\infty\} = \emptyset$ and $q(\mathbb{D}_i)$ is a punctured disc in $S-R$ centered at a point of $R$. Choose a loop $\xi$ in $\mathbb{D}_i$ with winding number one around $r'_i$ and with a base point which we denote by $z_i \in \mathbb{D}_i$. Observe that $q(\xi)$ is a loop contained in $q(\mathbb{D}_i)$, thus it is homotopically trivial in $S=\bP_1-\{0,1,\infty\}$.
		
		\item[$\bullet$] Choose a path $\mu$ from the point $b'$ to the point $z_i \in \mathbb{D}_i$; this path can be chosen in such a way that it does not pass through any point of $q^{-1}(\{0,1,\infty\})$.
		
		\item[$\bullet$] The path $\mu\xi\mu^{-1}$ is a loop in $\mathcal{B}'-A$ based on $b'$: we call such a loop a \emph{small loop on $\mathcal{B}'-A$ around $r'_i$}. Since this loop does not pass through any point of $q^{-1}(\{0,1,\infty\})$, it can be seen as a loop in $B'-A$. Since $q(\xi)$ is homotopically trivial in $S$, then $q(\mu\xi\mu^{-1})$ is also homotopically trivial in $S$.
		
	\end{enumerate}
	
	Let us consider a small loop around $r'_i$ for every $i$: they are loops contained in $B'-A$ whose projections via $q:B' \rightarrow S$ are homotopically trivial in $S$; moreover, their homotopy classes form a set of generators for the fundamental group $\pi_1(\mathcal{B}'-A,b')$. This means that the path $\delta_{i,j}$ can be obtained as a lifting of a composition of small loops around some points of $A$. Since these loops does not pass through any point of $q^{-1}(\{0,1,\infty\})$, then $\delta_{i,j}$ can be viewed as a path in $B-\phi^{-1}(A)$. Moreover, by our construction we obtained that $p(\delta_{i,j})$ is a homotopically trivial loop in $S$.\\
\end{proof}

\begin{rem}
	Observe that in \Cref{deltaij2} we have used in an essential way the construction of $\mathcal{B}'$ and the \emph{`separation of the branch locus'}.
\end{rem}

Fix the paths $\delta_{i-1,i}$ for each $i=2, \ldots,N$ coming from \Cref{deltaij2} and define the paths $\delta_i$ as follows:
\begin{equation}\label{delta_i2}
	\delta_i:= \delta_{1,2} *\cdots *\delta_{i-1,i}.
\end{equation}
In other words, $\delta_i$ is a path joining $b_1$ to $b_i$ such that $p(\delta_i)$ is a homotopically trivial loop in $S$ with base point $s$.

At this points, the proof works exactly as above. For the sake of completeness we give a brief recap. First of all we can analytically continue $\log_\sigma$ at the $b_i$'s as in \Cref{defLogb_i} and we can assume again \Cref{conditionLog} to be satisfied. Again, by Mordell-Weil theorem for function fields we can consider a loop $\alpha$ whose homotopy class is one of the generators of $\pi_1(B,b_1)$ as in \Cref{alpha} and define the auxiliary loops $\alpha_i$. By Shioda's theorem, we obtain \Cref{control_omegai} and define the loop $\Gamma$ as in \Cref{Gamma}. With the same calculations as in \Cref{finalCalc} we get the thesis.

\subsection{Some comments about the approach and Belyi pairs}

Here, we discuss the advantages and limitations of this alternative approach compared to the first one. In what follows we are going to refer to the form of the loop $\Gamma$. Notice that the construction of the loops $\alpha_i$'s is the same in both approaches; what is different is the constructions of the $\delta_i$'s. This second approach is advantageous in terms of effectivity: in fact in \Cref{delta_ij} we need to construct some loop $\xi$ in $B$ which annihilates the monodromy of the path $\mu_{i,j}$. In \Cref{deltaij2}, the paths $\delta_{i,j}$ are provided immediately by the monodromy action of $\pi_1(\mathcal{B}'-A,b')$ on the fiber: each lifting which connects $b_i$ to $b_j$ is fine, since all these paths have a projection which is homotopically trivial in $S$.

On the other hand, the main limitation of this approach is clearly the assumption on the branch locus of sections made in \Cref{AbhyankarApproach}. Here an example to underline this limitation showing that the previous proof cannot be extended without new considerations to the case in which $\sigma$ is ramified at the whole bad locus $\{0,1,\infty\}$.

\begin{exe}\label{exampleAbhyankar}
	Let us consider the algebraic section of $\cL \rightarrow S$
	$$\sigma(\lambda)=\left(x(\lambda), \sqrt{x(\lambda)(x(\lambda)-1)(x(\lambda)-\lambda)}\right),$$
	where $x(\lambda)$ is defined by
	$$x(\lambda):=\sqrt{\lambda} + \sqrt{\lambda-1} + \sqrt{\lambda-r},$$
	for a fixed point $r \in S$.
	
	The ramification locus of this section contains the whole set $\{0,1,\infty\}$. Moreover, the section is also ramified at $r$, so it is a non-torsion section.\footnote{Note that we are not applying \Cref{ShiodaThm} this time: it regards the reverse implication. However, what we are saying is also true by general theory.} Let $p:B\rightarrow S$ the base change which makes $\sigma$ to become well-defined and fix a point $s \in S$ which is not in the branch locus of $\sigma$. The points $b_i$ in the fiber $p^{-1}(s)$ correspond to different choices of the square roots which appear in the coordinates of $\sigma$. 
	
	By using \Cref{redBettCase}, we obtain the diagram
	$$\begin{tikzcd}
		\widetilde{\cE} \arrow{d} \arrow{r} & \cL' \arrow{d} \arrow{r} & \cL \arrow{d}\\
		\widetilde{B} \arrow{r}{\phi} \arrow[bend right, swap]{rr}{\widetilde{p}}& B' \arrow{r}{q} & S,
	\end{tikzcd}$$
	which satisfies the properties described in \Cref{reductionBetterCase}. We use this construction because we do not want to consider all the points of the fibers $p^{-1}(s)$ or $\widetilde{p}^{-1}(s)$: we need paths $\delta_{i}$ which leaves periods unchanged. Thus, we fix a point $b' \in q^{-1}(s)$ and consider only the points of $\phi^{-1}(b')$. Choosing only these points corresponds to fixing a branch of the square root $\sqrt{\lambda}$ and some other branch corresponding to the ramification at $\infty$; in fact, recall that the previous diagram is used to `separate the branch locus' of $\sigma$ and obtain the morphism $\phi$ which is unramified over the preimages of $0,\infty$. Unfortunately, our previous method does not remove the ramification over the whole of $\{0,1,\infty\}$: this means that into the fiber $\phi^{-1}(b')$ there are points which correspond to different choices of the square root $\sqrt{\lambda-1}$. If we consider two points $b_i, b_j \in \phi^{-1}(b')$ which correspond to the same branch of $\sqrt{\lambda}$ and $\sqrt{\lambda-r}$ but to different branches of $\sqrt{\lambda-1}$, then for each path $\delta$ connecting $b_i$ to $b_j$ the loop $\widetilde{p}(\delta)$ is not homotopically trivial in $S$. Therefore, in this situation our previous method fails to provide paths $\delta_i$ which satisfy \Cref{deltaij2}. In what follows we give a method to deal with some cases like this.
\end{exe}

\paragraph{Belyi pairs.}

Keep in mind the proof of \Cref{AbhyankarApproach} and \Cref{exampleAbhyankar}. The impossibility to remove the ramification over the whole of $\{0,1,\infty\}$ is due to the fact that we also want the compactification of $B'$ to be $\bP_1$ (we use this fact in the construction of $\delta_i$). We would be able to remove all three ramifications, but only allowing the genus of $B'$ at least $1$. Anyway, this attempt also fails: in fact, the growth of the genus of $B'$ gives rise to new generators of the fundamental group $\pi_1(\mathcal{B}'-A,b')$ which act non-trivially by monodromy on periods. This naturally leads to ask the following question: \emph{in which case the approach can be extended to deal with sections which are ramified into the whole of $\{0,1,\infty\}$?}

Here, the idea is to deal with the problem by using the theory of Belyi pairs and dessin d'enfants (see \cite{G} and \cite{JW} for details). By the term \emph{Belyi pair} we will refer to a pair $(\mathcal{B}, f)$ in which $\mathcal{B}$ is a compact Riemann surface and $f$ is a Belyi function, i.e. a morphism $f:\mathcal{B} \rightarrow \bP_1$ with exactly three branching values which we can assume to be $\{0,1,\infty\}$; two Belyi pairs will be considered equivalent when they are equivalent as ramified coverings. There exists a bijective map between equivalence classes of Belyi pairs and equivalence classes of dessins d'enfants: moreover the branching properties of the Belyi function are determined by the `graph theory' properties of the corresponding dessin d'enfant.

Now, look at the proof of \Cref{AbhyankarApproach}. In order to allow the idea of the proof to work we need a Belyi pair $(\bP_1,q)$ which play the role of the morphism $q:\mathcal{B}' \rightarrow \bP_1$ in \Cref{redBettCase}. Moreover, since we need the use of Abhyankar's Lemma, we want the Belyi function $q$ to be a Galois morphism. This corresponds to considering \emph{regular dessins}. We say that a regular dessin is of the type $(l,m,n)$ if the ramification indexes at preimages of $0,1,\infty$ are $l,m,n$ respectively. By an application of Riemann-Hurwitz formula it's possible to determine all the possibilities for the type $(l,m,n)$. First of all we obtain that in each case at least one of $l, m$ and $n$ must be equal to $2$; we may assume that $m = 2$. Simple arithmetic shows that the only solutions for the type $(l,m,n) = (l,2,n)$ are the following:
$(3,2,3), (3,2,4), (3,2,5)$ and $(2,2,n)$. These are all the cases where the proof of \Cref{AbhyankarApproach} can be extended to a section which ramifies in the whole of $\{0,1,\infty\}$.

\section{Further results and applications}

Here we end with some final considerations and examples. In particular, we will consider the case of Masser's section in details and we will determine generators of a full-rank subgroup of the monodromy group $M_\sigma^{\textnormal{rel}}$ in the general case. Thus, we obtain stronger effective results and leave some questions which could be interesting thinking to some possible applications.

\subsection{Some final questions}

We continue denoting by $\mathfrak{a}_0$ and $\mathfrak{a}_1$ two fixed and small enough circles centered at $0$ and $1$, respectively. We consider again a section which ramifies in the whole of $\{0,1,\infty\}$. Since the section is algebraic, then each branch point has finite branching order. Hence, by \Cref{periodFunctions} there exist loops $\zeta_0, \zeta_1$ in $B$ with base point $b_1$ such that
\begin{equation}\label{ramificationLoops}
	p(\zeta_0)=\mathfrak{a}_0^{z}, \quad  p(\zeta_1)=\mathfrak{a}_1^{z}, \qquad \textrm{for some } z \in \bZ.
\end{equation}
We want to point out that the proof of \Cref{AbhyankarApproach} works if we assume we have some control on the variation of $\log_\sigma$. To this end, suppose that $\log_\sigma$ has non-trivial variation along $\zeta_0$ or along $\zeta_1$; without loss of generality we put
$$
c_{\zeta_1}(\log_\sigma)=\log_\sigma(b_1) + \omega_{\zeta_{1}}, \qquad \omega_{\zeta_{1}}\neq0.
$$
Then we are assuming $\alpha=\zeta_1$ in \Cref{alpha}; from now on we write $\alpha$ in place of $\zeta$. The same proof as before continues to hold with some additional considerations: the branch locus $A$ of the morphism $\phi$ in \Cref{redBettCase} contains some preimage of $1$. Hence, the projections of paths $\delta_i$ defined in \Cref{delta_i2} are not homotopically trivial but they satisfy
$$
p(\delta_i)=\mathfrak{a}_1^{k_i}, \qquad \textrm{for some } k_i \in \bZ.
$$
By \Cref{periodFunctions} we get
\begin{equation}\label{periodComments}
	\omega(b_i):=c_{\delta_i}(\omega)=c_{\mathfrak{a}^{k_i}}(\omega), \qquad c_{\alpha_i}(\omega_j)=c_{\mathfrak{a}^z}(\omega_j^\cL)
\end{equation}
for each period function $\omega$ locally defined on $B$ and $j=1,2$. Analogously to what has been done in \Cref{control_omegai}, since $\omega_{\alpha_{1}}$ is non-zero there exists an index $i$ satisfying $\omega_{\alpha_i}\neq c_{\delta_i}(\omega_{\alpha_1})$. Let's say some words about this point: if we assume $\omega_{\alpha_i}= c_{\delta_i}(\omega_{\alpha_1})$ for all $i$'s, thanks to \Cref{ShiodaThm} we get $\sum_{i}c_{\delta_i}(\omega_{\alpha_1})=0$. Since the monodromy action of each $c_{\delta_{i}}$ is represented by a multiple of the triangular matrix associated to $\mathfrak{a}_{1}$, then the action of $\sum_{i}c_{\delta_i}$ is represented by an invertible matrix and the thesis follows. Let us fix an index $i$ which satisfies $\omega_{\alpha_i}\neq c_{\delta_i}(\omega_{\alpha_1})$. Let's consider the path $\delta_i$ and define the loop
$$
\Gamma:=\alpha_1\delta_i\alpha_i^{-1}\delta_i^{-1}.
$$
\Cref{figureGamma} aims at representing the loop $\Gamma$ when section ramifies at $1$.
\begin{figure}[h!]
	\centering
	\scalebox{0.6}{
		\begin{tikzpicture}[scale=0.6,xscale=0.8,>=latex, decoration={markings,mark=at position 0.64 with {\arrow{>}}}]
			
			\path (8,6) coordinate (b+);
			\path (8,12) coordinate (b-);
			\path (b+) to[out=180,in=-20] coordinate[pos=0.424](K1) coordinate[pos=0.56](K2) (1.7,8) to[out=160,in=155] (b-);
			
			\draw [color=blue!400,line width=2] {(b+) to[out=-30,in=190] (11,4.8) to[out=10,in=270] (12,5.5) to[out=90,in=5] (10.5,6.25) to[out=185,in=44] (9,5.7); 
			\draw [color=blue!400,line width=2,postaction={decorate}] (8.6,5.3) to[out=224,in=165] (9.5,4) to[out=-15,in=240] (13,5.5) to[out=60,in=44] (b+)};
			
			\draw [color=blue!400,line width=2] {(b-) to[out=-30,in=190] (11,10.8) to[out=10,in=270] (12,11.5) to[out=90,in=5] (10.5,12.25) to[out=185,in=44] (9,11.7); 
			\draw [color=blue!400,line width=2,postaction={decorate}] (8.6,11.3) to[out=224,in=165] (9.5,10) to[out=-15,in=240] (13,11.5) to[out=60,in=44] (b-)};
			
			\draw [color=blue!400,line width=2] (b+) to[out=195,in=-22] (K1);
			\draw [color=blue!400,line width=2] (K1) to[out=158,in=-20] (K2);
			\draw [color=blue!400,line width=2,postaction={decorate}] (K2)to[out=160,in=-45] (1.8,7.9) to[out=135,in=175] (b-);
			
			\filldraw[color=red] (b+) node[above left]{\Large $b_1$} ellipse (0.15 and 0.05);
			\filldraw[color=red] (b-) node[above left]{\Large $b_i$} ellipse (0.15 and 0.05);

			\node at(13.4,5.55)[above right]{\Large $\alpha_1$};
			\node at(13.4,11.55)[above right]{\Large $\alpha_i$};
			\node at(2.4,10.7)[above left]{\Large $\delta_i$}; 
			\node at(-2,8)[above left]{\Large $\bm{\Gamma}$};        
		\end{tikzpicture}
	}
	\caption{A visualization of the loop $\Gamma$.}\label{figureGamma}
\end{figure}
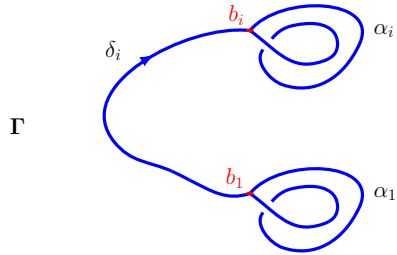

By \Cref{periodComments}, we get
$$
c_{\Gamma}(\omega)= \omega(b_1)
$$ 
for any period function $\omega$, i.e. periods don't change value in $b_1$ along $\Gamma$. With the same calculations as in \Cref{finalCalc} we obtain
$$
c_\Gamma(\log_\sigma)=\log_\sigma(b_1) + c_{\delta_i^{-1}}c_{\alpha_i^{-1}}(c_{\delta_i}(\omega_{\alpha_1}) - \omega_{\alpha_i}).
$$
Since the variation is non-trivial, we have obtained the thesis.

Anyway, we have no guarantees that the hypothesis for the variation of $\log_\sigma$ along $\zeta_0$ or $\zeta_1$ occur. Thus, the previous argument only apply to some particular cases. However it gives rise to some questions:
\begin{enumerate}
	\item Are we able to obtain some control on the variation of logarithms along $\zeta_0$ or/and $\zeta_1$?
	
	\item Are we able to find two loops of the type \Cref{Gamma} which give rise to independent variations of $\log_\sigma$?
\end{enumerate}

The first question aims at extending effectivity towards other ends, for example thinking to some applications in transcendence theory or finding explicit expressions of the monodromy action of $\pi_1(B,b)$ on determinations of the Betti map. The second question is a further push towards effectivity: thanks to our previous proofs we are now able to find an explicit loop which realizes the non-triviality of $M_\sigma^{\textnormal{rel}}$ but we also know that $M_\sigma^{\textnormal{rel}}\cong \bZ^2$. Answer the second question correspond to finding two loops which generate the full relative monodromy group (or a full-rank subgroup of it). In the final section we apply the techniques developed above, giving the concrete example of Masser's section and answering some of the previous questions.

\subsection{The case of Masser's section}

We precise that the following apply to all non-torsion algebraic sections with constant abscissa. For the sake of simplicity we choose one of them to give a specific example. Masser's section is an algebraic section of the Legendre scheme $\cL \rightarrow S$ defined by
$$
\sigma(\lambda)=\left(2,\sqrt{2(2-\lambda)}\right).
$$
In order to make it well defined we need the ramified cover $p: B \rightarrow S$ defined by taking the square root of $2-\lambda$. This cover has degree $2$ and is ramified above $\lambda=2$. We consider the elliptic scheme $\cE$ obtained as pullback of Legendre scheme via $p$: it can be seen as the elliptic curve over $\bC(\psi)$ defined by
$$
y^2=x(x-1)(x+\psi^2-2),
$$
where $\psi^2=2-\lambda$. The section $\sigma$ is pulled back to a well defined (rational) section $\widetilde{\sigma}(\psi)=(2,\sqrt{2}\psi)$, which is not identically torsion, since it is ramified over $S$. When we refer to $\log_\sigma$ (resp. $\sigma$) on $B$, we mean $\log_{\widetilde{\sigma}}$ (resp. $\widetilde{\sigma}$). Therefore, we have a diagram as in \Cref{diagramScheme}. Moreover, observe that a different choice of the square root would merely change sign to the section.

Our aim is finding a loop in $B$ where logarithm has a non-trivial variation but which leaves periods unchanged.

\paragraph{Choice of the right loop.}
	
	Fix a base point $s$ in $S-\{2\}$ and consider the loops $\mathfrak{a}_0$ and $\mathfrak{a}_1$ with base point $s$. Moreover, according to the notation used in \Cref{D0}, call $\mathfrak{d}$ a small loop in $S$ around $2$. The fiber $p^{-1}(s)$ contains two points, say $b_+, b_-$, which correspond to the two opposite square roots. Let $\delta$ be the lifting (via $p$) of $\mathfrak{d}$ with starting point $b_{+}$.
	
	Fix a branch $\log_\sigma$ of logarithm of $\sigma$ at $b_{+}$ and define $\log_{\sigma}(b_{-}):=c_{\delta}(\log_{\sigma})$. Since a different choice of the square root changes sign to $\sigma$, we have
	$$
	c_{\delta}(\sigma)=\sigma(b_-)=-\sigma(b_+).
	$$
	Hence $\log_\sigma(b_-)$ is a determination of $-\log_\sigma(b_+)$, so they differ by a period. In other words, we get
	\begin{equation}\label{omega_delta}
	c_{\delta}(\log_\sigma)=\log_\sigma(b_-)= -\log_\sigma(b_+)+\omega_\delta,
	\end{equation}
	where $\omega_\delta$ is a period. In order to simplify the calculations, we can replace the section $\sigma$ by a section $\widetilde{\sigma}$ so that we have
	$$
	\log_{\widetilde{\sigma}}=\log_\sigma-\frac{\omega_\delta}{2}.
	$$
	Observe that this transformation is made by adding a two-torsion section to $\sigma$ and since the two-torsion is well-defined over the base $B$ we don't need to extend the base. Moreover, note that adding a torsion section does not change our conclusions: in fact we are interesting in analytic continuations along loops which leaves unchanged periods. Let's continue denoting the new section $\widetilde{\sigma}$ by $\sigma$; thus we can assume $\omega_{\delta}=0$ in \Cref{omega_delta}, i.e.
	\begin{equation}\label{signLog}
	c_\delta(\log_\sigma)=-\log_{\sigma}(b_+).
	\end{equation}
	Now, let's consider $\mathfrak{a}_{0}$ and $\mathfrak{a}_{1}$. Put $\alpha_{+}, \alpha_{-}$ to be the liftings of $\mathfrak{a}_{1}$ with base points $b_{+},b_{-}$, respectively. By \Cref{wellDefLog} the variation of logarithm has to be non-trivial along one of the liftings of $\mathfrak{a}_{0}$ or $\mathfrak{a}_{1}$. Thus, without loss of generality we can assume
	\begin{equation}\label{logMasser}
		c_{\alpha_+}(\log_\sigma)=\log_\sigma(b_+) + \omega_+,
	\end{equation}
	where $\omega_+$ is a non-zero period. Let's consider the loop
	$$
	\gamma:=\mathfrak{a}_1\mathfrak{d}\mathfrak{a}_1^{-1}\mathfrak{d}^{-1}
	$$
	whose homotopy class in $\pi_1(S,s)$ is the identity. We define the loop $\Gamma$ as the lifting of $\gamma$ via $p$ with base point $b_+$, which is equivalent to putting
	\begin{equation}\label{GammaMasser}
	\Gamma=\alpha_+\delta\alpha_-^{-1}\delta^{-1}.
	\end{equation}
	Appealing to the notations of \Cref{commutators}, note that $\Gamma \in p_{*}^{-1}(D^{(1)})$.	The situation is clarified in \Cref{MasserSection}.
	\begin{figure}[h!]		\centering
		\scalebox{0.9}{
		\begin{tikzpicture}[scale=0.6,xscale=0.8,>=latex, decoration={markings,mark=at position 0.64 with {\arrow{>}}}]

			\path (-4,4) coordinate (A1);
			\path (12,4) coordinate (B1);
			\path (13,7) coordinate (C1);
			\path (-2,7) coordinate (D1);
			\path (-4,8) coordinate (A2);
			\path (12,8) coordinate (B2);
			\path (13,11) coordinate (C2);
			\path (-2,11) coordinate (D2);
			\path (-3,5.5) coordinate (M1);
			\path (12.5,5.5) coordinate (N1);
			\path (-3,9.5) coordinate (M2);
			\path (12.5,9.5) coordinate (N2);
			\path (M2) to[out=0,in=180] coordinate[pos=0.5](O) (N1);
			\path[name path=border5] (A1) to[out=0,in=180] coordinate[pos=0.5](I) (B2);
			
			\draw[line width=2] (C1) to[out=180,in=-15] coordinate[pos=0.767](K3) (O);
			\draw (O) to[out=195,in=0] coordinate[pos=0.49](K4) (D1);
			\path (8,6) coordinate (b+);
			\path (8,10) coordinate (b-);
						
			\shade[left color=gray!35, right color=gray!30]
			(I) to[out=-10,in=180] (N1) -- (C1) to[out=180,in=-15] (O) -- cycle;
			
			\shade[left color=gray!50, right color=gray!45] 
			(M1) to[out=0,in=200] (I) -- (O) to[out=195,in=0] (D1) -- cycle;
			
			\shade[left color=gray!50, right color=gray!34] 
			(A1) to[out=0,in=180] (B2) -- (N2) to[out=180,in=0] (M1) -- cycle;
			
			\path (O) to[out=200,in=0] coordinate[pos=0.7](K6) (M1);
			\draw[gray!100!white, very thick , path fading=south,postaction={draw, gray!47!white, path fading=north},,line width=2] (O) to[out=200,in=10] (K6);
			
			\shade[left color=gray!50, right color=gray!30] 
			(A2) to[out=0,in=180] (B1) -- (N1) to[out=180,in=0] (M2) -- cycle;
			\path (N1) to[out=180,in=0] coordinate[pos=0.28](K7) coordinate[pos=0.7](K8) (M2);
			\draw[gray!100!white, very thick , path fading=south,postaction={draw, gray!35!white, path fading=north},,line width=2] (O) to[out=-30,in=160] (K7);
			
			\shade[left color=gray!40, right color=gray!30] 
			(I) to[out=22,in=180] (B2) -- (N2) to[out=180,in=22] (O) -- cycle;
			
			\shade[left color=gray!50, right color=gray!30]
			(M2) to[out=0,in=165] (O) to[out=20,in=180] (N2) -- (C2) -- (D2) -- cycle;
			
			\path (N2) to[out=180,in=0] coordinate[pos=0.3](K5) (M1);
			\draw[gray!37!white, very thick , path fading=south,postaction={draw, gray!50!white, path fading=north},line width=2] (O) to[out=40,in=190] (K5);
			
			\draw[gray!43!white, very thick , path fading=south,postaction={draw, gray!45!white, path fading=north},line width=2] (O) to[out=125,in=-10] (K8);
			
			\path[draw,name path=border1] (A2) to[out=0,in=180] (B1);
			\draw (B1) to (C1);
			\draw (D1) to (A1);
			\draw (A1) to[out=0,in=180] (B2);
			\draw (B2) to (C2);
			\draw (C2) to (D2);
			\draw (D2) to (A2);
			
			\shade[left color=gray!15,right color=gray!45] (O) to[out=110,in=180] (5.7,8.5) to[out=0,in=70] (O);
			
			
			\path (-4,0) coordinate (A);
			\path (12,0) coordinate (B);
			\path (13,3) coordinate (C);
			\path (-2,3) coordinate (D);

			\shade[left color=gray!50, right color=gray!30]
			(A) -- (B) -- (C) -- (D) -- cycle;
			
			\draw[line width=2] (A) -- (B) -- (C) -- (D) -- cycle;
			
			\path (5,1.7) coordinate (2);
			\filldraw[color=red] (2) node[below left]{\Large $2$} ellipse (0.15 and 0.05);
			
			\path (8,2) coordinate (s);
			
			\path (10,1.5) coordinate (1);
			\filldraw[color=white] (1) ellipse (0.15 and 0.05);
			\draw (1) node[below right]{$1$} ellipse (0.15 and 0.05);
			
			\path (0,1.5) coordinate (0);
			\filldraw[color=white] (0) ellipse (0.15 and 0.05);
			\draw (0) node[above right]{$0$} ellipse (0.15 and 0.05);
			
			\draw [color=blue!400,line width=2,postaction={decorate}] (s) to[out=-30,in=190] (11.5,0.8) to[out=10,in=5] (s);
			\draw [color=blue!400,line width=2,postaction={decorate}] (s) to[out=150,in=160] (2,0.7) to[out=-20,in=-150] (s);
			\filldraw[color=red] (s) node[above right]{\Large $s$} ellipse (0.15 and 0.05);
			
			
			\path (10,5.5) coordinate (1+);
			\path (10,9.5) coordinate (1-);
			\filldraw[color=white] (1+) ellipse (0.15 and 0.05);
			\filldraw[color=white] (1-) ellipse (0.15 and 0.05);
			\draw (1+) ellipse (0.15 and 0.05);
			\draw (1-) ellipse (0.15 and 0.05);
			
			\path (0,5.2) coordinate (0+);
			\path (0,9.2) coordinate (0-);
			\filldraw[color=white] (0+) ellipse (0.15 and 0.05);
			\draw (0+) ellipse (0.15 and 0.05);
			\filldraw[color=white] (0-) ellipse (0.15 and 0.05);
			\draw (0-) ellipse (0.15 and 0.05);
			
			\path (b+) to[out=180,in=-20] coordinate[pos=0.424](K1) coordinate[pos=0.56](K2) (1.7,8) to[out=160,in=155] (b-);
			
			\draw [color=blue!400,line width=2,postaction={decorate}] {(b+) to[out=-30,in=190] (11.5,4.8) to[out=10,in=5] (b+)};
			\draw [color=blue!400,line width=2,postaction={decorate}] (b-) to[out=-30,in=190] (11.5,8.8) to[out=10,in=5] (b-);
			\draw [color=blue!400,line width=2] (b+) to[out=195,in=-22] (K1);
			\draw [color=blue!40,line width=2,dotted] (K1) to[out=158,in=-20] (K2);
			\draw [color=blue!400,line width=2,postaction={decorate}] (K2)to[out=160,in=-45] (1.8,7.9) to[out=135,in=155] (b-);
			\filldraw[color=red] (b+) node[above right]{\Large $b+$} ellipse (0.15 and 0.05);
			\filldraw[color=red] (b-) node[above right]{\Large $b-$} ellipse (0.15 and 0.05);
			\draw[gray!20!white, very thick, path fading=south,postaction={draw, gray!35!white, path fading=north}, line width=2] (I) -- (O);
			\filldraw[color=red] (O) node[below right]{\Large $O$} ellipse (0.15 and 0.05);
			
			\draw[line width=2] (K4) to[out=184,in=0] (D1) -- (A1) to[out=0,in=180] (B2) -- (C2) -- (D2) -- (A2) to[out=0,in=180] (B1) -- (C1) to[out=180,in=-4.8] (K3);
			
			\node at(11,1.55)[above right]{\Large $\mathfrak{a}_1$};
			\node at(2,2)[above right]{\Large $\mathfrak{d}$};
			\node at(11,5.55)[above right]{\Large $\alpha_+$};
			\node at(11,9.55)[above right]{\Large $\alpha_-$};
			\node at(2,9.7)[above right]{\Large $\delta$};
			\node at(-6,1.4){\scalebox{3}{$\textbf{\textit{S}}$}};
			\node at(-6,9.7){\scalebox{3}{$\textbf{\textit{B}}$}};
		\end{tikzpicture}
	}
		\caption{A visualization of the covering map $B\rightarrow S$ and the relevant loops.}\label{MasserSection}
	\end{figure}
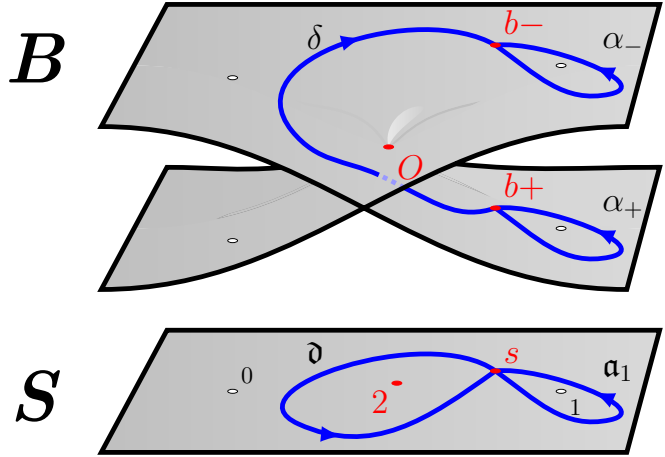
	
\paragraph{Monodromy of periods and logarithm along the loop.}
	
	Let us study the monodromy of periods along $\Gamma$. As usual, we define a basis of periods by the equations
	$$
	\omega_1=\omega_1^{\mathcal{L}}\circ p, \qquad \omega_2=\omega_2^{\mathcal{L}}\circ p.
	$$
	Hence, we get
	$$
	c_{\delta}(\omega_i)= c_{\mathfrak{d}}(\omega_i^{\mathcal{L}})=\omega_i(b_+) \qquad \textrm{for }i=1,2.
	$$
	In other words, this means that $\omega_i(b_+)=\omega_i(b_-)$. Analogously, we obtain
	$$
	c_{\alpha_+}(\omega_i)= c_{\alpha_-}(\omega_i) = c_{\mathfrak{a}_1}(\omega_i^{\mathcal{L}}).
	$$
	Therefore, for $i=1,2$ we have
	$$
	c_{\Gamma}(\omega_i)=c_{\delta^{-1}}c_{\alpha_-^{-1}}c_{\delta}c_{\alpha_+}(\omega_i)= c_{\alpha_+^{-1}} c_{\alpha_+}(\omega_i)= \omega_i(b_+).
	$$
	In other words, periods have no monodromy along $\Gamma$. Now, we want to study the analytic continuation of logarithm along $\Gamma$. Let us consider the loops $\alpha_+, \alpha_-:I \rightarrow B$. Since they are liftings of the same loop $\mathfrak{a}_{1}$ and the section changes sign when changing branch of the square root, we get
	$$
	\sigma(\alpha_+(t)) + \sigma(\alpha_-(t))=0 \quad \textrm{for each } t \in I.
	$$
	In terms of logarithms, this means that
	$$
	\log_{\sigma}(\alpha_+(t)) + \log_{\sigma}(\alpha_-(t))= \omega(t),
	$$
	where $\omega$ is a period function. By \Cref{signLog}, we get $\omega(0)=0$. Since $\omega(t)$ is an integral combination of a basis of periods, it follows that $\omega(t)= 0$ for each $t\in I$. This implies 
	$$
	\log_{\sigma}(\alpha_-(t))=-\log_{\sigma}(\alpha_+(t)) \quad \textrm{for each } t \in I.
	$$
	In particular this means that the analytic continuation of $\log_{\sigma}$ along $\alpha_-$ is the same as the analytic continuation of $-\log_{\sigma}$ along $\alpha_+$, i.e.
	$$
	c_{\alpha_-^{-1}}(\log_{\sigma}) = -c_{\alpha_+^{-1}}(\log_{\sigma}).
	$$
	Moreover, by \Cref{logMasser} and \Cref{lemmaMonodromy} we get
	$$
	c_{\alpha_+^{-1}}(\log_\sigma)  = \log_\sigma(b_+) -c_{\alpha_+^{-1}}(\omega_+).
	$$
	Thus, appealing to all previous considerations we obtain
	\begin{align*}
		c_{\delta}c_{\alpha_+}(\log_{\sigma})&= c_{\delta}(\log_{\sigma}+\omega_+) = c_{\delta}(\log_{\sigma}) + c_{\delta}(\omega_+)=\\
		& = \log_{\sigma}(b_-) + \omega_+(b_+).
	\end{align*}
	Using again the previous relations we get
	\begin{align*}
		c_{\alpha_-^{-1}}c_{\delta}c_{\alpha_+}(\log_{\sigma})&= c_{\alpha_-^{-1}}(\log_{\sigma}) + c_{\alpha_-^{-1}}(\omega_+)=\\
		&= -c_{\alpha_+^{-1}}(\log_{\sigma}) + c_{\alpha_+^{-1}}(\omega_+) =\\
		&= -\log_{\sigma}(b_+) + 2c_{\alpha_+^{-1}}(\omega_+)=\\
		&= \log_{\sigma}(b_-) + 2c_{\alpha_+^{-1}}(\omega_+).
	\end{align*}
	Finally we have
	\begin{equation}\label{variationMasser}
	\begin{aligned}
		c_{\Gamma}(\log_{\sigma})&=c_{\delta^{-1}}c_{\alpha_-^{-1}}c_{\delta}c_{\alpha_+}(\log_{\sigma}) = c_{\delta^{-1}}(\log_{\sigma})+ 2c_{\alpha_+^{-1}}(\omega_+)=\\
		&= \log_{\sigma}(b_+) + 2c_{\alpha_+^{-1}}(\omega_+).
	\end{aligned}
	\end{equation}
	Therefore, the value of the logarithm in $b_+$ has changed by a period $2c_{\alpha_+^{-1}}(\omega_+)$ which is non-zero by \Cref{logMasser}. The loop $\Gamma$ provides the non-triviality of the group $M_{\sigma}^{\textnormal{rel}}$.
	
	To be more precise, we have just proved the existence of a loop $\Gamma_1:=\Gamma$ which generates a non-trivial variation of logarithm of Masser's section leaving periods of the corresponding scheme unchanged. Trying to find explicit generators of the full relative monodromy group, analogously to \Cref{GammaMasser} we can define a loop $\Gamma_{0}$ by replacing $\alpha_{+}$ and $\alpha_{-}$ with liftings of $\mathfrak{a}_{0}$, which we denote by $\beta_{+}, \beta_{-}$ for a moment. Unfortunately, we are not able to prove that the monodromy action of $\Gamma_{0}$ on $\log_{\sigma}$ is non-trivial. Anyway, we are able to find another loop $\Gamma'$ depending on $\Gamma$ such that the monodromy actions of $\Gamma$ and $\Gamma'$ generate a full-rank subgroup of $M_{\sigma}^{\textnormal{rel}}$; we will do this in full generality in the final part of the paper. In the specific case of Masser's section, the loop $\Gamma'$ will be of the form $\Gamma'=\alpha \Gamma \alpha^{-1}\Gamma^{-1}$ where $\alpha \in \{\alpha_{+}, \beta_{+}\}$; keeping the notations of \Cref{commutators}, we obtain $\Gamma \in p_{*}^{-1}\left(D^{(1)}\right)$ and $\Gamma' \in p_{*}^{-1}\left(D^{(2)}\right)$ in this case.

\subsection{Generating a full-rank subgroup of $M_{\sigma}^{\textnormal{rel}}$ with effectivity}

Put ourselves in the same setting as in \Cref{effectiveProofSection} and keep the same notations. Moreover, let $z \in \bZ$ be the least common multiple of ramification indexes of points of $B$ which lie over $0$ or $1$ via the map $p:B \rightarrow S$. Denote by $\zeta_{0}$ and $\zeta_{1}$ the loops in $B$ with base point $b_{1}$ satisfying \Cref{ramificationLoops}. Here we are going to prove a version of \Cref{mainTheoremTrue}, which is stronger than before in the sense of effectivity: in fact, we are going to exhibit explicit generators of a full-rank subgroup of $M_{\sigma}^{\textnormal{rel}}$.

\begin{thm}\label{mainTheoremStrong}
	Let $\sigma:B \rightarrow \cE$ be a non-torsion section of the elliptic scheme and let $\Gamma$ be the loop constructed in \Cref{mainTheoremTrue}. There exists a loop $\Gamma'$ in $B$ depending on $\Gamma$ and $z$ such that the monodromy actions of $\Gamma$ and $\Gamma'$ generate a subgroup of $M_{\sigma}^{\textnormal{rel}}$ which is isomorphic to $\bZ^{2}$; actually, $\Gamma'$ is of the form
	$$
	\Gamma'=\zeta\Gamma\zeta^{-1}\Gamma^{-1},
	$$
	where $\zeta \in \{\zeta_{0},\zeta_{1}\}$; further, keeping the notations of \Cref{commutators} we have $\Gamma' \in p_{*}^{-1}\left(D^{(z+1)}\right)$. In particular, the relative monodromy group of logarithm of $\sigma$ with respect to periods of $\cE \rightarrow B$ is isomorphic to $\bZ^2$.
\end{thm}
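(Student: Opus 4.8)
The plan is to reduce the statement to a single commutator computation in $\textnormal{SL}_3(\bZ)$ together with an elementary linear-independence argument. Recall that for $g \in \ker\rho$ the matrix $\theta_\sigma(g)$ is unipotent of the form $\theta_\sigma(g)=\left(\begin{smallmatrix} I & w_g \\ 0 & 1\end{smallmatrix}\right)$, so on $\ker\rho$ the assignment $g\mapsto w_g$ is additive and identifies $M_\sigma^{\textnormal{rel}}=\theta_\sigma(\ker\rho)$ with the subgroup of $\bZ^2$ generated by the vectors $w_g$. By \Cref{mainTheoremTrue} we already have $\Gamma\in\ker\rho$ with $v:=w_\Gamma\neq 0$. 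It therefore suffices to produce a second loop $\Gamma'\in\ker\rho$ whose vector $w_{\Gamma'}$ is $\bQ$-linearly independent of $v$: then $\langle v, w_{\Gamma'}\rangle$ is a rank-two subgroup of $M_\sigma^{\textnormal{rel}}\subseteq\bZ^2$, and since $\theta_\sigma(\Gamma),\theta_\sigma(\Gamma')$ commute (both being unipotent with $\rho$-part $I$), they generate exactly $\langle v,w_{\Gamma'}\rangle\cong\bZ^2$, forcing $M_\sigma^{\textnormal{rel}}\cong\bZ^2$.

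First I would set $\Gamma'=\zeta\Gamma\zeta^{-1}\Gamma^{-1}$ and compute its monodromy. Writing $Z:=\rho(\zeta)$, using $\rho(\Gamma)=I$ and the cocycle rule $w_{gh}=w_g+\rho(g)w_h$, a short telescoping gives $\rho(\Gamma')=Z\cdot I\cdot Z^{-1}\cdot I=I$ (so $\Gamma'$ indeed leaves periods unchanged and lies in $\ker\rho$) and $w_{\Gamma'}=(Z-I)\,v$. Next I would make $Z$ explicit. Since the period monodromy factors through $p_*$ via \Cref{periodFunctions}, and $p(\zeta_0)=\mathfrak{a}_0^{z}$, $p(\zeta_1)=\mathfrak{a}_1^{z}$ by \Cref{ramificationLoops}, the generators $\rho(g_0),\rho(g_1)$ yield
\[
\rho(\zeta_0)-I=\begin{pmatrix} 0 & 2z \\ 0 & 0\end{pmatrix},\qquad \rho(\zeta_1)-I=\begin{pmatrix} 0 & 0 \\ 2z & 0\end{pmatrix}.
\]

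The crux is then a dichotomy on the nonzero vector $v=(a,b)^t$. With $\zeta=\zeta_0$ one gets $w_{\Gamma'}=(2zb,0)^t$, so the matrix with columns $v,w_{\Gamma'}$ has determinant $-2zb^2$; with $\zeta=\zeta_1$ one gets $w_{\Gamma'}=(0,2za)^t$ and determinant $2za^2$. As $z\geq 1$ (a least common multiple of ramification indices) and $v\neq 0$ forces $a\neq 0$ or $b\neq 0$, choosing $\zeta\in\{\zeta_0,\zeta_1\}$ accordingly makes the determinant nonzero. This gives the required independence and hence $M_\sigma^{\textnormal{rel}}\cong\bZ^2$, with $\theta_\sigma(\Gamma),\theta_\sigma(\Gamma')$ as explicit generators of a full-rank subgroup.

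Finally I would check $\Gamma'\in p_*^{-1}(D^{(z+1)})$ by bookkeeping on $p_*(\Gamma')=[\,\mathfrak{a}^{z},p_*(\Gamma)\,]$ with $\mathfrak{a}\in\{\mathfrak{a}_0,\mathfrak{a}_1\}$: writing $\mathfrak{a}^{z}\mathfrak{d}\mathfrak{a}^{-z}\mathfrak{d}^{-1}$ as a telescoping product of $z$ ordinary commutators with the single generator $\mathfrak{a}$, each factor of the form $[\mathfrak{a},\mathfrak{a}^{j}\mathfrak{d}\mathfrak{a}^{-j}]$ raises the level by one in the recursion defining $X^{(n)},D^{(n)}$, so that a word at level $m$ is sent into $D^{(m+z)}$; starting from $p_*(\Gamma)\in D^{(1)}$ (as in the Masser computation) this lands in $D^{(z+1)}$. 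I expect this last level count to be the only genuinely delicate point, since the independence argument is essentially forced once the commutator identity $w_{\Gamma'}=(\rho(\zeta)-I)v$ and the explicit shape of $\rho(\zeta_i)-I$ are in hand; everything else is a one-line computation.
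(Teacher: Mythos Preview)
Your proof is correct and follows essentially the same route as the paper: set $\Gamma'=[\zeta,\Gamma]$, compute that the period part is trivial and that the logarithm variation is $(\rho(\zeta)-I)\,w_\Gamma$ (the paper writes this as $c_{\zeta^{-1}}(\omega_\Gamma)-\omega_\Gamma$, which is the same expression up to the usual transpose/inverse convention relating the action on period \emph{functions} to the action on \emph{coefficients}), then run the dichotomy on which coordinate of $w_\Gamma$ is nonzero to choose $\zeta\in\{\zeta_0,\zeta_1\}$. Your packaging via the cocycle identity and the $2\times2$ determinant is slightly slicker than the paper's direct analytic-continuation bookkeeping, but the content is identical; the $D^{(z+1)}$ claim is likewise handled the same way, with both you and the paper taking $p_*(\Gamma)\in D^{(1)}$ as input and then passing through the commutator with $\mathfrak a^{\,z}$.
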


\begin{proof}
	Thanks to \Cref{mainTheoremTrue} we know that $\Gamma$ induces a trivial monodromy action on periods and a non-trivial variation on $\log_{\sigma}$. We write 
	$$
	c_{\Gamma}(\log_{\sigma})=\log_{\sigma}(b_{1})+\omega_{\Gamma},
	$$
	where $\omega_{\Gamma}:=n_{1}\omega_{1} + n_{2}\omega_{2}$ is a non-zero period and $n_{1},n_{2}$ are integers not both zero. Without loss of generality, let us assume $n_{1}\neq 0$ and consider the loop $\zeta_{1}$. (Otherwise, if we are in the case $n_{1}= 0$ we must have $n_{2}\neq 0$ and we can proceed in the same way but considering $\zeta_{0}$ in place of $\zeta_{1}$.) Define
	$$
	\Gamma':=\zeta_{1}\Gamma\zeta_{1}^{-1}\Gamma^{-1}
	$$
	and notice that $\Gamma'$ induces trivial monodromy action on periods. By analytic continuation we get
	$$
	c_{\Gamma'}(\log_{\sigma})=\log_{\sigma}(b_{1}) + c_{\zeta_{1}^{-1}}(\omega_{\Gamma}) - \omega_{\Gamma},
	$$
	where $c_{\zeta_{1}^{-1}}(\omega_{\Gamma}) - \omega_{\Gamma}=-2n_{1}z\omega_{2}$. Therefore, $\Gamma$ and $\Gamma'$ give rise to independent variations of $\log_{\sigma}$, so that their monodromy actions generate a subgroup of $M_{\sigma}^{\textnormal{rel}}$ which is isomorphic to $\bZ^{2}$. Finally, since $\Gamma \in p_{*}^{-1}\left(D^{(1)}\right)$ and by \Cref{ramificationLoops}, we get $\Gamma' \in p_{*}^{-1}\left(D^{(z+1)}\right)$.\\
\end{proof}

\bibliographystyle{hplain}

\bibliography{biblio}

\begin{thebibliography}{10}

\bibitem{A}
Y.~Andr\'e.
\newblock {M}umford-{T}ate groups of mixed {Hodge} structures and the theorem
  of the fixed part.
\newblock {\em Compositio Mathematica}, 82(1):1--24, 1992.

\bibitem{Asa}
Mamoru Asada.
\newblock The faithfulness of the monodromy representations associated with
  certain families of algebraic curves.
\newblock {\em Journal of Pure and Applied Algebra}, 159(2):123--147, 2001.

\bibitem{Ber}
D.~Bertrand.
\newblock Revisiting {M}anin's theorem of the kernel.
\newblock {\em Ann. Fac. Sci. Toulouse Math. (6)}, 29(5):1301--1318, 2020.

\bibitem{B}
D.~Bertrand.
\newblock {\em Extensions de $D$-modules et groupes de Galois différentiels,
  $p$-adic analysis}.
\newblock Lecture Notes in Math. 1454. Springer, Berlin, 1990, Trento 1989.

\bibitem{CMZ}
P.~Corvaja, D.~Masser, and U.~Zannier.
\newblock Torsion hypersurfaces on abelian schemes and {B}etti coordinates.
\newblock {\em Math. Ann.}, 371(3-4):1013--1045, 2018.

\bibitem{CTZ}
P.~Corvaja, J.~Tsimerman, and U.~Zannier.
\newblock Finite orbits in surfaces with a double elliptic fibration and
  torsion values of sections, 2023, 2302.00859.

\bibitem{CZ2}
P.~Corvaja and U.~Zannier.
\newblock Unramified sections of the {L}egendre scheme and modular forms.
\newblock {\em J. Geom. Phys.}, 166:Paper No. 104266, 26, 2021.

\bibitem{CZ1}
P.~Corvaja and U.~Zannier.
\newblock Poncelet games and elliptic schemes, Preprint 2021.

\bibitem{CH}
T.~Crespo and Z.~Hajto.
\newblock {\em Algebraic Groups and Differential Galois Theory}, volume 112 of
  {\em Graduate Studies in Mathematics}.
\newblock American Mathematical Society, Providence, RI, 2011.

\bibitem{DT}
P.~Dolce and F.~Tropeano.
\newblock Finite translation orbits on double families of abelian varieties,
  2024, 2401.07015.

\bibitem{G}
E.~Girondo and G.~Gonz{\'a}lez-Diez.
\newblock {\em Introduction to Compact Riemann Surfaces and Dessins D'Enfants}.
\newblock Introduction to Compact Riemann Surfaces and Dessins D'enfants.
  Cambridge University Press, 2012.

\bibitem{H}
D.~Husem\"oller.
\newblock {\em Elliptic curves}, volume 111 of {\em Graduate Studies in
  Mathematics}.
\newblock Springer-Verlag, 2004.
\newblock Second Ed.

\bibitem{JS}
G.~A. Jones and H.~Schmidt.
\newblock Pfaffian definitions of weierstrass elliptic functions.
\newblock {\em Mathematische Annalen}, 379(1):825--864, 2021.

\bibitem{JW}
G.A. Jones and J.~Wolfart.
\newblock {\em Dessins d'Enfants on Riemann Surfaces}.
\newblock Springer Monographs in Mathematics. Springer International
  Publishing, 2018.

\bibitem{Man}
Yu.~I. Manin.
\newblock {R}ational points of algebraic curves over function fields.
\newblock {\em Izv. Akad. Nauk SSSR Ser. Mat.}, 27:6:1395--1440, 1963.

\bibitem{MZ3}
D.~Masser and U.~Zannier.
\newblock Torsion points on families of squares of elliptic curves.
\newblock {\em Math. Ann.}, 352(2):453--484, 2012.

\bibitem{MZ2}
D.~Masser and U.~Zannier.
\newblock Torsion points on families of products of elliptic curves.
\newblock {\em Advances in Mathematics}, 259:116--133, 2014.

\bibitem{MZ1}
D.~Masser and U.~Zannier.
\newblock Torsion points, {P}ell's equation, and integration in elementary
  terms.
\newblock {\em Acta Math.}, 225(2):227--313, 2020.

\bibitem{Villa}
G.D.V. Salvador.
\newblock {\em Topics in the Theory of Algebraic Function Fields}.
\newblock Mathematics: Theory \& Applications. Birkh{\"a}user Boston, 2007.

\bibitem{Shio}
T.~Shioda.
\newblock {On elliptic modular surfaces}.
\newblock {\em Journal of the Mathematical Society of Japan}, 24(1):20 -- 59,
  1972.

\bibitem{Sz}
T.~Szamuely.
\newblock {\em Galois Groups and Fundamental Groups}.
\newblock Cambridge Studies in Advanced Mathematics. Cambridge University
  Press, 2009.

\bibitem{Tro}
F.~Tropeano.
\newblock Monodromy of double elliptic logarithms.
\newblock {\em To appear in Rendiconti del Seminario Matematico
  dell'Università di Padova}, 2023.

\bibitem{Z3}
U.~Zannier.
\newblock Some remarks on the s-unit equation in function fields.
\newblock {\em Acta Arithmetica}, 64(1):87--98, 1993.

\bibitem{Z2}
U.~Zannier.
\newblock On davenport's bound for the degree of $f^3 - g^2$ and riemann's
  existence theorem.
\newblock {\em Acta Arithmetica}, 71(2):107--137, 1995.

\bibitem{Z1}
U.~Zannier.
\newblock Good reduction of certain covers $\mathbb{P}_1 \rightarrow
  \mathbb{P}_1$.
\newblock {\em Israel Journal of Mathematics}, 124:93--114, 2001.

\end{thebibliography}

\Addresses

 \end{document}